\documentclass[12pt]{amsart}
\usepackage{amsmath,amssymb,amsthm,mathtools}
\usepackage{amsfonts}
\usepackage{esint} 

\usepackage{mathrsfs}
\usepackage{enumerate}
\usepackage[top=1.3in, bottom=1.2in, left=1in, right=1in]{geometry}
\usepackage[english]{babel}
\usepackage[
    colorlinks=true,
    linkcolor=blue,
    citecolor=blue,
    urlcolor=blue,
    backref=page
]{hyperref}
\renewcommand*{\backref}[1]{}
\renewcommand*{\backrefalt}[4]{%
  \ifcase #1
  \or
    $\uparrow$ #2%
  \else
    $\uparrow$ #2%
  \fi
}

\usepackage{enumitem}
\usepackage[nameinlink,capitalize]{cleveref}

\numberwithin{equation}{section}

\newtheorem{theorem}{Theorem}[section]

\newtheorem{example}{Example}[section]
\newtheorem{proposition}[theorem]{Proposition}
\newtheorem{corollary}[theorem]{Corollary}
\newtheorem{lemma}[theorem]{Lemma}
\newtheorem{remark}{Remark}[section]

\newtheorem{assumption}{Assumption}[section]

\newcommand{\Ric}{\textup{Ric}}

\newcommand{\CP}{\mathbb{CP}}

\newcommand{\Id}{\operatorname{Id}}

\newcommand{\tr}{\textup{tr}}

\def\RR{\mathbb{R}}
\def\CC{\mathbb{C}}
\def\HH{\mathbb{H}}

\def\HP{\mathbb{HP}}
\def\CP{\mathbb{CP}}
\def\K{\mathbb{K}}
\def\KP{\mathbb{KP}}

\makeatletter
\newcommand*\owedge{\mathpalette\@owedge\relax}
\newcommand*\@owedge[1]{%
  \mathbin{%
    \ooalign{%
      $#1\m@th\bigcirc$\cr
      \hidewidth$#1\m@th\wedge$\hidewidth\cr
    }%
  }%
}
\makeatother

\setlist[enumerate]{
itemsep = 0.3em
}

\begin{document}

\title[Sharp focal radius estimate and rigidity of hypersurfaces]{Sharp focal radius estimate and rigidity of hypersurfaces in manifolds with positive curvature}
\author{Tsz-Kiu Aaron Chow}
\address{Department of Mathematics, Hong Kong University of Science and Technology, Hong Kong S.A.R., China}
\email{\href{chowtka@ust.hk}{chowtka@ust.hk}}
\thanks{T.-K. A. C. is supported  by the Croucher Foundation Start-up Grant and the HKUST New Faculty Start-up Grant}

\author{Jingbo Wan}
\address{Laboratoire Jacques-Louis Lions de Sorbonne Universit\'e, 4 place Jussieu, Paris 75005, France}
\email{\href{jingbo.wan@sorbonne-universite.fr}{jingbo.wan@sorbonne-universite.fr}}
\thanks{J. W. is supported by ERC-2023 AdG 101141855 BLaHST}

\begin{abstract}
We prove a sharp Clifford-threshold focal-radius estimate and rigidity for immersed
hypersurfaces.  Under a $p$-form curvature condition, formulated by the
Weitzenb\"ock curvature term together with
$\Ric_p\ge p$, any closed two-sided immersion
$F:\Sigma^m\to M^{m+1}$ with $b_p(\Sigma;\mathbb R)\neq0$ and
$1\le p\le m/2$ satisfies
\[
    r_f(F,M)\le\frac{\pi}{4}.
\]
The equality case is rigid: if the ambient manifold is complete, equality
forces the hypersurface to be locally the Clifford hypersurface
$S^p(1/\sqrt2)\times S^{m-p}(1/\sqrt2)\subset S^{m+1}(1)$; if the ambient
manifold is compact and connected, it is a spherical space form.  The
curvature condition follows from $\sec\ge1$ for $p=1$, from normalized
$\mathrm{PIC1}\ge1$ for $p=2$, and from curvature operator bounded below by
one in all degrees.  By quotient lifting and the Hopf fibrations, we also
obtain focal-radius estimates in $\CP^n$ and $\HP^n$, with
projective Clifford rigidity, without any Betti-number assumption.
\end{abstract}

\maketitle


\section{Introduction}
\label{sec:introduction}

Focal radius is an extrinsic counterpart of diameter: it measures the
largest interval on which the normal exponential map of a submanifold remains
nonsingular.  Under positive curvature, focal-radius bounds often reflect the
interaction between ambient curvature and the topology of the submanifold.
The basic sharp threshold is $\pi/2$.  A theorem of Guijarro--Wilhelm
\cite{GuijarroWilhelm2018} shows that, in sectional curvature at least one,
the focal radius is at most $\pi/2$ under natural closedness assumptions, and
equality corresponds to the totally geodesic model.

The next natural model is the Clifford hypersurface
\[
    S^p\left(\frac1{\sqrt2}\right)
    \times
    S^{m-p}\left(\frac1{\sqrt2}\right)
    \subset S^{m+1}(1).
\]
Its two focal submanifolds lie at distance $\pi/4$ on the two sides.  Unlike
the totally geodesic sphere, this hypersurface carries nontrivial intermediate
cohomology.  Thus the Clifford example suggests a sharper question: can
nontrivial $p$-th cohomology force the focal radius down from the totally
geodesic threshold $\pi/2$ to the Clifford threshold $\pi/4$? We prove that the answer is yes under curvature condition tailored to the
Bochner formula on $p$-forms.

The curvature condition has two parts.  First, the ambient curvature restricted
to each tangent hyperplane has sufficiently positive Weitzenb\"ock curvature
on $p$-forms.  Second, the intermediate Ricci curvature satisfies
$\Ric_p\ge p$.  These assumptions have the same sharp normalization and combine
with the normal index form to give a Bochner obstruction to the existence of two focal-free normal directions of length greater than $\pi/4$.

The equality case is rigid.  The equality analysis first gives a
Clifford-type splitting of the pullback normal cylinder.  Since the ambient
normal exponential map extends smoothly to the focal points when $M$ is
complete, the collapsed factors are automatically round.  Hence the
hypersurface is locally Clifford.  If $M$ is compact and connected, the open
Clifford collar is dense, so the ambient metric has constant sectional
curvature one.

The assumptions include familiar positivity conditions.  For
$p=1$, it follows from $\sec\ge1$.  For $p=2$, it follows from normalized
$\mathrm{PIC1}\ge1$.  If the curvature operator is bounded below by the
identity, it holds in every degree.  Thus the result is a
Clifford-threshold analogue of the Bochner vanishing philosophy of Bochner,
Gallot--Meyer, and Petersen--Wink
\cite{Bochner1946,GallotMeyer1975,PetersenWink2020}, with an additional
extrinsic comparison input.

This $\pi/4$ phenomenon is closely related to recent results in scalar and
positive curvature geometry.  Gromov's metric inequalities with scalar
curvature \cite{Gromov2018} and the theory of torical and Clifford bands
highlight Clifford products as sharp extremal models.  In dimension three,
Zhu proved that if $(M^3,g)$ is a $3$-sphere with $\sec\ge1$ and
$\Sigma\to M$ is an immersed torus, then
$r_f(\Sigma,M)\le\pi/4$, with equality only for the round metric and the
Clifford torus \cite{Zhu2021}.  Ge proved a sharp normal-injectivity-radius
estimate for embedded orientable hypersurfaces in the round sphere which are
not topological spheres, again with Clifford hypersurfaces as equality models
\cite{Ge2024}.  Hirsch--Kazaras--Khuri--Zhang proved sharp band-width and
normal-injectivity-radius estimates under positive Ricci and positive
$2$-Ricci curvature assumptions using spacetime harmonic functions
\cite{HirschKazarasKhuriZhang2025}.  In the positive isotropic curvature
setting, Chow--Zhu obtained related focal-radius bounds under PIC and
Betti-number assumptions \cite{ChowZhu2024}.

The present theorem extends this \(\pi/4\) picture in a different direction.  It estimates focal radius rather
than normal injectivity radius, so it applies naturally to immersed
hypersurfaces.  It works in arbitrary degree $p$, with the curvature condition
matched to the Bochner formula on $p$-forms.  Finally, its equality case
identifies the local ambient geometry, not only the hypersurface model.

Another application of the same method comes from quotient geometry.  Although the Betti-number
hypothesis is not stable under quotients, the lift of a hypersurface to an
odd-dimensional total space gains topology from the vertical directions.
Horizontal lifting preserves normal focal radii.  Hence an odd-dimensional
total space satisfying the curvature assumptions in all degrees gives the
same $\pi/4$ bound for every hypersurface in a free positive-dimensional
isometric quotient.  Applying this to the Hopf fibrations gives focal-radius
estimates for immersed hypersurfaces in $\CP^n$, $\HP^n$  without
any Betti-number assumption, with projective Clifford rigidity in the complex
and quaternionic cases.

\subsection{Setup and main results}
\label{subsec:setup-main-results}

Let $(M^{m+1},g_M)$, $m\ge2$, be a Riemannian manifold.  We adopt the convention
\[
    R(X,Y)Z=\nabla_X\nabla_YZ-\nabla_Y\nabla_XZ-\nabla_{[X,Y]}Z,
    \qquad
    R_{ijkl}=-g(R(e_i,e_j)e_k,e_l),
\]
so that $K(e_i\wedge e_j)=R_{ijij}$.  For symmetric two-tensors $h,k$, recall the Kulkarni-Nomizu product
\[
\begin{aligned}
    (h\owedge k)(X,Y,Z,W)
    =\frac12\{&h(X,Z)k(Y,W)+k(X,Z)h(Y,W) \\
              &-h(X,W)k(Y,Z)-k(X,W)h(Y,Z)\}.
\end{aligned}
\]
Thus $g\owedge g$ has constant sectional curvature $1$.

Let $F:\Sigma^m\to M^{m+1}$ be a closed, connected, two-sided immersed hypersurface, and
choose a unit normal field $\nu$.  Define
\[
    \Phi(x,t)=\exp_{F(x)}(t\nu_x).
\]
The two-sided focal radius is
\[
    r_f(F,M)
    =
    \sup\left\{
    r>0:
    \Phi_t \text{ is defined and nonsingular for every } |t|<r
    \right\}.
\]
For $0<r<r_f(F,M)$, set $\mathcal C_r=\Sigma\times(-r,r)$ and
$g=\Phi^*g_M$.  We write $g_t=g|_{\Sigma\times\{t\}}$ and
$g_0=F^*g_M$.  The second fundamental form convention is
\[
    A(X,Y)=g(\nabla_X\nu,Y),
    \qquad
    Ae_i=\kappa_i e_i,
    \qquad
    H=\sum_{i=1}^m\kappa_i.
\]

Fix $1\le p\le m/2$.  Let $q^{(p)}$ be the Weitzenb\"ock curvature term on
$p$-forms:
\[
    \Delta_H\omega=(d+d^*)^2\omega=-\Delta\omega+q^{(p)}\omega,
    \qquad
    q^{(p)}(R)\omega
    =
    -\sum_{i,j}e^i\wedge i_{e_j}(R(e_i,e_j)\omega).
\]
By the Gauss equation,
\[
    R^\Sigma=R_T^M+A\owedge A,
\]
and hence, by linearity of $q^{(p)}$ in the curvature tensor,
\begin{equation}\label{eq:gp-Gauss-split}
    q_\Sigma^{(p)}
    =
    q_T^{M,p}+q_A^{(p)},
    \qquad
    q_T^{M,p}=q^{(p)}(R_T^M),
    \qquad
    q_A^{(p)}=q^{(p)}(A\owedge A).
\end{equation}
This decomposition also appears in \cite{Savo2014}.

\begin{assumption}[Ambient $p$-Bochner curvature]
\label{ass:gp-ambient-p-Bochner}
For every $x\in M$ and every $m$-plane $W\subset T_xM$,
\begin{equation}\label{eq:gp-ambient-p-Bochner}
    q^{(p)}(R^M|_W)\ge p(m-p)
    \qquad\text{on }\Lambda^pW^*.
\end{equation}
\end{assumption}

\begin{assumption}[Intermediate Ricci lower bound]
\label{ass:gp-Ric-p}
We assume $\Ric_p\ge p$, namely
\begin{equation}\label{eq:gp-Ric-p}
    \sum_{a=1}^p R(E_a,X,E_a,X)\ge p
\end{equation}
for every unit vector $X$ and every orthonormal $p$-frame
$E_1,\ldots,E_p\perp X$.
\end{assumption}

\begin{theorem}[Focal radius estimate]
\label{thm:gp-focal-estimate}
Let $1\le p\le m/2$.  Assume that $(M^{m+1},g_M)$ satisfies Assumptions
\ref{ass:gp-ambient-p-Bochner} and \ref{ass:gp-Ric-p} in degree $p$.
Let $F:\Sigma^m\to M^{m+1}$ be a closed two-sided immersed hypersurface.
If $b_p(\Sigma;\mathbb R)\neq0$, then
\[
    r_f(F,M)\le \frac{\pi}{4}.
\]
\end{theorem}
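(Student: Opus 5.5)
Argue by contradiction: suppose $r_f(F,M)>\pi/4$ and fix $r$ with $\pi/4<r<r_f(F,M)$. On the collar $\mathcal C_r$ with $g=\Phi^*g_M$, every level set $\Sigma_t=\Sigma\times\{t\}$ is an immersed hypersurface isometric to $(\Sigma,g_t)$, and all are diffeomorphic to $\Sigma$. Since $b_p(\Sigma)\neq0$, each $(\Sigma,g_t)$ carries a nonzero $g_t$-harmonic $p$-form $\omega_t$. Apply the Bochner formula on $\Sigma_t$, integrate, and use the Gauss splitting \eqref{eq:gp-Gauss-split} together with Assumption~\ref{ass:gp-ambient-p-Bochner}:
\[
0=\int_{\Sigma_t}\bigl(|\nabla\omega_t|^2+\langle q^{(p)}_\Sigma\omega_t,\omega_t\rangle\bigr)\ \ge\ \int_{\Sigma_t}\bigl(p(m-p)|\omega_t|^2+\langle q_A^{(p)}\omega_t,\omega_t\rangle\bigr).
\]
The term $q_A^{(p)}$ is computed in a principal frame: on $e^{I}$ with $|I|=p$ it is $\bigl(\sum_{i\in I}\kappa_i\bigr)\bigl(\sum_{j\notin I}\kappa_j\bigr)$ (Savo's formula). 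Hence we obtain an averaged lower bound
\[
\int \sum_I \Bigl(p(m-p)+\kappa_I\,\kappa_{I^c}\Bigr)|\omega_I|^2\le 0 ,
\]
with $\kappa_I=\sum_{i\in I}\kappa_i$ and $\kappa_{I^c}=\sum_{j\notin I}\kappa_j$. For this to hold, at each $t$ some $p$-subset $I$ must have $\kappa_I\kappa_{I^c}\le -p(m-p)$ somewhere, i.e.\ the sum of $p$ principal curvatures and the sum of the remaining $m-p$ have opposite signs and are large.

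The second ingredient uses Assumption~\ref{ass:gp-Ric-p} along the normal geodesics. The shape operators $A_t$ of $\Sigma_t$ satisfy the Riccati equation $A'+A^2+R_\nu=0$. Take the trace over a $p$-dimensional subspace, with the Schwarz inequality $\operatorname{tr}_P(A^2)\ge(\operatorname{tr}_PA)^2/p$ and $\Ric_p\ge p$. Then any partial trace $\kappa_P(t)$ over a $p$-plane (for instance, following a parallel or minimizing frame) satisfies $\kappa_P'\le -p-\kappa_P^2/p$, whose comparison solution is $-p\tan(t+c)$. Such a solution cannot stay finite on an interval of length greater than $\pi/2$. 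Since no focal point occurs for $|t|<r$, this gives the two-sided bound
\[
-p\tan(r-t)\ \le\ \kappa_P(t)\ \le\ p\tan(r+t)
\]
for $|t|<r$. The same holds for partial traces over $(m-p)$-planes with $p$ replaced by $m-p$, because $\Ric_{m-p}\ge m-p$ follows from $\Ric_p\ge p$ by averaging when $m-p\ge p$.

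The main step, and the expected obstacle, is combining these two inputs at a single well-chosen slice, which I would take to be $t=0$ after re-centering. Focal-freeness for $|t|<r$ with $r>\pi/4$ gives $|\kappa_I|<p\tan r$-type bounds. But the product condition $\kappa_I\kappa_{I^c}\le -p(m-p)$ requires, say, $\kappa_I\ge p\,s$ and $\kappa_{I^c}\le -(m-p)/s$, that is, one side at least $\tan$ of $\pi/4$ level. This contradicts the strict bounds $|\kappa_I/p|<\tan(\cdot)$ with margin $r-\pi/4>0$ when both one-sided estimates are applied at the same point, because $\tan(\pi/4-\epsilon)\tan(\pi/4-\epsilon)<1$. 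The subtlety is that the Bochner inequality is only integral and the subsets $I$ vary with the point. I would therefore prove the pointwise strict inequality $p(m-p)+\kappa_I\kappa_{I^c}>0$ for all $I$ everywhere on $\Sigma_0$ using the comparison bounds. This makes $q_\Sigma^{(p)}>0$, and Bochner then gives $b_p=0$, a contradiction.

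If a direct pointwise bound at $t=0$ fails, I would instead integrate the Bochner quantity in $t$ over $(-r,r)$ with the volume weight of the cylinder. The Jacobi/Riccati comparison would then be used in integrated form, as in an index-form argument, to show positivity on average.
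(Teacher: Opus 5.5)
Your argument has the same architecture as the paper's: Bochner on $\Sigma$ at $t=0$, the Gauss splitting with $q_T^{M,p}\ge p(m-p)$, the diagonal formula $\langle q_A^{(p)}e^I,e^I\rangle=s_I(H-s_I)$, and two-sided bounds on partial traces of $A$ over $p$- and $(m-p)$-planes, with $\Ric_{m-p}\ge m-p$ obtained by averaging.

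The only real difference is how you get the trace bounds. You integrate the traced Riccati inequality $\kappa_P'\le -p-\kappa_P^2/p$ along a parallel frame. The paper instead inserts the test fields $V_a=\frac{\sin(L-t)}{\sin L}E_a$ into the normal index form (Lemma \ref{lem:gp-index-trace}). Both give the same bound for the estimate. The index-form version has one advantage: its equality case (Lemma \ref{lem:gp-index-nonnegative}) directly yields $A|_P=-\cot L\,\Id$ and $\tr_P B_\nu=r$. The paper reuses exactly this in the rigidity section (Lemma \ref{lem:gp-focalpoint-index-equality}).

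However, your displayed comparison bound is wrong, and taken literally it is useless. Finiteness of $\kappa_P$ on $[t,r)$ and on $(-r,t]$ gives
\[
-p\cot(r-t)\le\kappa_P(t)\le p\cot(r+t),
\]
not $\mp p\tan(r\mp t)$. Relatedly, the comparison solutions $-p\tan(t+c)$ stay finite on intervals of length $\pi$, not $\pi/2$.

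At $t=0$ your version gives only $|\kappa_P|\le p\tan r$, with $\tan r>1$, and this cannot push $\kappa_I\kappa_{I^c}$ above $-p(m-p)$. The correct version, with $\pi/4<r<\min\{r_f,\pi/2\}$, gives $|s_I|\le p\cot r$ and $|H-s_I|\le (m-p)\cot r$. Hence
\[
q_\Sigma^{(p)}\ge p(m-p)\bigl(1-\cot^2 r\bigr)>0
\]
pointwise on $\Sigma$, which contradicts the existence of a nonzero harmonic $p$-form.

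Your final sentence, using $\tan(\pi/4-\epsilon)=\cot(\pi/4+\epsilon)$, already works with the correct quantity. So this is a slip rather than a missing idea. The hedged fallback of integrating the Bochner quantity in $t$ is unnecessary.
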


\begin{theorem}[Rigidity]
\label{thm:gp-rigidity}
Let $1\le p\le m/2$. Assume Assumptions
\ref{ass:gp-ambient-p-Bochner} and \ref{ass:gp-Ric-p}. Let
$F:\Sigma^m\to M^{m+1}$ be a closed connected two-sided immersed hypersurface
with $b_p(\Sigma;\mathbb R)\neq0$. Assume that $M$ is connected and complete.
If
\[
    r_f(F,M)=\frac{\pi}{4},
\]
then $(M,g_M)$ is a spherical space form of sectional curvature $1$. In
particular, its universal cover is isometric to $S^{m+1}(1)$, and every local
lift of $F$ to the universal cover is locally congruent to the standard
Clifford hypersurface
\[
    S^p\left(\frac1{\sqrt2}\right)
    \times
    S^{m-p}\left(\frac1{\sqrt2}\right)
    \subset S^{m+1}(1).
\]\end{theorem}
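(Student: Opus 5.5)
The plan is to rerun the proof of Theorem~\ref{thm:gp-focal-estimate} at the borderline value $r=\pi/4$ and to track where every inequality must be an equality. In that proof, for each $0<r<r_f$, one fixes a nonzero harmonic $p$-form $\omega$ on $(\Sigma,g_0)$, transplants it to the parallel hypersurfaces $\Sigma_t=\Phi(\Sigma\times\{t\})$, $|t|<r$, and combines three ingredients: the Bochner formula with the Gauss splitting \eqref{eq:gp-Gauss-split}, Assumption~\ref{ass:gp-ambient-p-Bochner} on the tangential part $q_T^{M,p}$, and the Riccati/index-form comparison along normal geodesics, which uses $\Ric_p\ge p$ to bound the $q_A^{(p)}$ term. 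Together these yield an integral inequality that fails once $r>\pi/4$. If $r_f=\pi/4$, the same inequality holds for every $r<\pi/4$, and letting $r\uparrow\pi/4$ it must become an equality. I therefore expect equality pointwise in Assumption~\ref{ass:gp-ambient-p-Bochner}, equality in Assumption~\ref{ass:gp-Ric-p} for the relevant $p$-frames, $\nabla\omega=0$ on $\Sigma$, and equality in the Riccati comparison.

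Equality in the Riccati comparison forces the shape operator $A_t$ of $\Sigma_t$ to solve the model Riccati equation along each normal geodesic. The principal curvatures then split into a block of $p$ eigenvalues equal to $\tan(\pi/4-t)$ and a block of $m-p$ eigenvalues equal to $-\tan(\pi/4+t)$ (up to sign convention). These are exactly the principal curvatures of the parallel hypersurfaces of the Clifford hypersurface. Since $\omega$ is parallel and nonzero, $\Sigma$ carries a parallel $p$-plane field, namely the $p$-dimensional eigendistribution. Its orthogonal complement is also parallel. By de Rham, $(\Sigma,g_0)$ locally splits as a product $N^p\times N'^{\,m-p}$, and the pulled-back metric on the cylinder becomes
\[
g=dt^2+\cos^2\!\Bigl(\tfrac{\pi}{4}-t\Bigr)\,2h_1+\cos^2\!\Bigl(\tfrac{\pi}{4}+t\Bigr)\,2h_2
\]
for metrics $h_1,h_2$ on the factors, up to the obvious normalization. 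The Jacobi equation, with equality in the sectional-curvature bounds, gives $R(\cdot,\partial_t)\partial_t=\mathrm{Id}$ on both blocks.

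The next step, and the one I expect to be the main obstacle, is to show that $h_1$ and $h_2$ are round of curvature $1$. Completeness of $M$ is what makes this work. Each normal geodesic is defined up to $t=\pm\pi/4$, and $\Phi$ extends smoothly to $\Sigma\times[-\pi/4,\pi/4]$. At $t=\pi/4$ the $N$-factor collapses, and smoothness of the ambient metric $g_M$ near the focal set means that, along the $N'$ directions, the metric $dt^2+\sin^2(\pi/4-t)\,2h_1$ must close up smoothly at the focal point, just like polar coordinates. Following the standard criterion for warped products with a collapsing fiber, this forces $2h_1$ to be the round unit metric. The same argument at $t=-\pi/4$ handles $h_2$. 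The delicate part is justifying smoothness of the extension in these coordinates: I would work with the exponential map from the focal submanifold, whose unit normal sphere bundle parametrizes $N$, so that $N$ is locally identified with a unit sphere $S^p$ in the normal space. It follows that the metric on the open collar $\Phi(\Sigma\times(-\pi/4,\pi/4))$ is locally isometric to the open Clifford collar in $S^{m+1}(1)$. In particular it has constant curvature $1$ there, and $F$ is locally congruent to $S^p(1/\sqrt2)\times S^{m-p}(1/\sqrt2)$.

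Finally, to conclude that all of $M$ has constant curvature $1$, I show that the collar image together with the two focal sets is open and closed in $M$. The image of $\Sigma\times[-\pi/4,\pi/4]$ is compact, hence closed. It is also open, because near each focal point the explicit model shows that the image contains a full neighborhood: the extended map is the normal exponential map of a focal submanifold and covers every normal direction. Since $M$ is connected, $M$ is this compact set, so $\sec\equiv1$ on the dense collar and hence everywhere. A compact complete manifold of constant curvature $1$ is a spherical space form, and its universal cover is $S^{m+1}(1)$. Lifting $F$ locally then gives the stated Clifford congruence.
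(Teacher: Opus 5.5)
\textbf{Main gap: parallelism of the eigendistribution.} The step that fails is ``since $\omega$ is parallel and nonzero, $\Sigma$ carries a parallel $p$-plane field, namely the $p$-dimensional eigendistribution.'' What equality actually gives is weaker. In a frame adapted to the eigenspace splitting $T\Sigma=E\oplus F$ of $A$ (eigenvalues $\mp1$ at $t=0$), only multi-indices $I$ at the two corners $s_I(H-s_I)=-p(m-p)$ can occur in $\omega$.

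For $p<m/2$ this forces $\omega=c\,\mathrm{vol}_{E'}$, with $E'$ the $p$-dimensional eigenspace and $|c|$ constant. A nonzero parallel decomposable form does determine a parallel $p$-plane field, so your shortcut works there once this is said.

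For $m=2p$, however, both corners are admissible. Then $\omega=c_1\mathrm{vol}_E+c_2\mathrm{vol}_F$ with possibly $c_1c_2\neq0$, and parallelism of $\omega$ no longer pins down $E$. For $p=1$, $m=2$ (Zhu's torus case), $\omega$ is just a parallel $1$-form on a flat torus and says nothing about the principal line fields. For $p=2$, $m=4$, $c_1=\pm c_2$, the stabilizer of $\omega$ is a $U(2)$ that moves $E$.

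The paper closes this with an ingredient absent from your plan. Since $B_\nu=\Id$ attains the minimum permitted by $\Ric_p\ge p$, take the first variation of $\sum_a R^M(U_a,N_s,U_a,N_s)$ as the normal is tilted, $N_s\propto\nu+sW$. This gives $\sum_a R^M(U_a,\nu,U_a,W)=0$ for every $p$-plane, and with the Bianchi identity it yields $R^M(X,Y,\nu,Z)=0$ for tangent $X,Y,Z$. Codazzi then reads $(\nabla_XA)Y=(\nabla_YA)X$. With two constant distinct eigenvalues, this forces $\nabla^{\Sigma_t}P_t=0$. Without an argument of this kind you do not get the local product $N\times N'$, on which your warped-product picture and the final Clifford congruence rest.

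\textbf{Second gap: equality only at $t=0$.} You extract equality only on $\Sigma$ itself, and that does not give $R(\cdot,\partial_t)\partial_t=\Id$ on both blocks along the collar. The index-form equality at $t=0$ shows two things: Jacobi fields starting in the $(-1)$-block are model fields on $[0,\pi/4)$, and those starting in the $(+1)$-block are model fields on $(-\pi/4,0]$. But for $t>0$ the $(+1)$-block is only known to satisfy $B_\nu\ge\Id$ (from $\Ric_p\ge p$). Its Jacobi fields may then focus earlier than in the model without contradicting $r_f=\pi/4$, and similarly for $t<0$.

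The paper obtains full control by redoing the Bochner argument on every $\Sigma_t$. It uses the $g_t$-harmonic representative $\omega_t$ of $[\omega]$ (a transplanted $\omega$ is not $g_t$-harmonic) and the one-sided focal distances $\pi/4\mp t$; the identity $\cot(\pi/4-t)\cot(\pi/4+t)=1$ gives $q^{(p)}_{\Sigma_t}\ge0$. Equality at each $t$ then yields $\tr_{E_t}B_\nu+\tr_{F_t}B_\nu=m$ pointwise. Together with $\Ric_p\ge p$ this forces $B_\nu|_{T\Sigma_t}=\Id$, and the doubly warped metric follows.

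With these two repairs the rest of your outline is sound. Your open--closed argument for the last step is a valid alternative to the paper's route, which uses Bonnet--Myers for compactness and then shows a minimizing geodesic from $F(\Sigma)$ cannot pass a focal point, so every point lies within $\pi/4$. For your route you need to note that $f_\pm$ have constant rank, and that the direction map from a compact fibre component to the unit normal sphere is a local isometry, hence a covering and onto. Finally, with your displayed metric it is the $h_2$-factor that collapses at $t=\pi/4$, not the $h_1$-factor; this is only a labeling slip.
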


\subsection{Concrete curvature hypotheses}
\label{subsec:concrete-curvature-hypotheses}

We record several concrete curvature hypotheses under which Assumptions
\ref{ass:gp-ambient-p-Bochner} and \ref{ass:gp-Ric-p} hold.  Let $\mathcal C_{\mathrm{PIC1}}(V)$ be the cone
of algebraic curvature tensors $S$ such that
\begin{equation}\label{eq:PIC1-cone}
    S_{1313}
    +\lambda^2S_{1414}
    +S_{2323}
    +\lambda^2S_{2424}
    -2\lambda S_{1234}\ge0
\end{equation}
for every orthonormal four-frame and every $\lambda\in[0,1]$.  With our
normalization, $\mathrm{PIC1}\ge1$ means
\begin{equation}\label{eq:normalized-PIC1}
    R^M-g_M\owedge g_M\in\mathcal C_{\mathrm{PIC1}}(T_xM)
    \qquad\text{for every }x\in M.
\end{equation}

\begin{corollary}[The $b_1$-case]
\label{cor:p-one-sec}
Let $F:\Sigma^m\to M^{m+1}$ be a closed two-sided immersed hypersurface,
$m\ge2$.  If $\sec_M\ge1$ and $b_1(\Sigma;\mathbb R)\neq0$, then
\[
    r_f(F,M)\le\frac{\pi}{4}.
\]
If equality holds and $M$ is complete, then the rigidity conclusions of
Theorem \ref{thm:gp-rigidity} hold with $p=1$.
\end{corollary}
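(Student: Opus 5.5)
The plan is to deduce the corollary from Theorems \ref{thm:gp-focal-estimate} and \ref{thm:gp-rigidity} with $p=1$. Since $m\ge2$, the range condition $1\le p\le m/2$ holds. So the whole task is to show that $\sec_M\ge1$ implies Assumptions \ref{ass:gp-ambient-p-Bochner} and \ref{ass:gp-Ric-p} in degree $1$. Once they are verified, the inequality $r_f(F,M)\le\pi/4$ is exactly Theorem \ref{thm:gp-focal-estimate}. In the equality case with $M$ complete, Theorem \ref{thm:gp-rigidity} gives the rigidity conclusions; there I would first replace $\Sigma$ by the connected component carrying a nonzero $b_1$ class if necessary.

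\textbf{Assumption \ref{ass:gp-Ric-p} for $p=1$.} This is immediate. For a unit vector $X$ and a unit vector $E_1\perp X$, the quantity $R(E_1,X,E_1,X)=K(E_1\wedge X)$ is a sectional curvature, so it is at least $1=p$.

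\textbf{Assumption \ref{ass:gp-ambient-p-Bochner} for $p=1$.} Fix an $m$-plane $W\subset T_xM$ with orthonormal basis $e_1,\dots,e_m$, and let $\omega=\sum_k\omega_ke^k\in W^*$. With the stated formula, $q^{(1)}(R^M|_W)\omega=-\sum_{i,j}e^i\, i_{e_j}(R(e_i,e_j)\omega)$, where $R(e_i,e_j)$ acts on $1$-forms as a derivation. Unwinding this under the convention $R_{ijkl}=-g(R(e_i,e_j)e_k,e_l)$ should give the familiar identity $q^{(1)}(R|_W)\omega=\Ric_W(\omega^\sharp)^\flat$. Here $\Ric_W(X,X)=\sum_{j=1}^m R(e_j,X,e_j,X)$ is the Ricci tensor of the restricted curvature $R^M|_W$, that is, the trace over $W$ only. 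Sign bookkeeping is the one place to be careful. As a check, I would verify the identity on $g\owedge g$, where it must return $(m-1)\omega$; this is consistent with the normalization $p(m-p)=m-1$ at $p=1$.

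With the identity in hand, take a unit $X\in W$ and complete it to an orthonormal basis of $W$. Then $\Ric_W(X,X)=\sum_{j\ne} K(e_j\wedge X)\ge m-1$, since each of the $m-1$ terms is a sectional curvature and hence at least $1$. Therefore $q^{(1)}(R^M|_W)\ge m-1=p(m-p)$ on $W^*$, which is Assumption \ref{ass:gp-ambient-p-Bochner}.

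I do not expect a real obstacle. The only care needed is confirming that, with the paper's sign conventions for $R$ and for the derivation action on forms, $q^{(1)}$ is $+\Ric_W$ rather than its negative, and the model-space check settles this.
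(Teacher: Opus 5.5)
Your proposal is correct and follows the paper's route. The corollary is Theorems \ref{thm:gp-focal-estimate} and \ref{thm:gp-rigidity} with $p=1$, after checking, as in Example \ref{ex:p-one-sec}, that $\sec_M\ge1$ gives $\Ric_1\ge1$ and $\langle q^{(1)}(R^M|_W)v^\flat,v^\flat\rangle=\sum_{j\ge2}R(e_j,v,e_j,v)\ge m-1$; under the paper's conventions the sign you flag does come out positive, as this direct computation shows.
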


\begin{corollary}[The $b_2$-case]
\label{cor:p-two-PIC1}
Let $F:\Sigma^m\to M^{m+1}$ be a closed two-sided immersed hypersurface,
$m\ge4$.  Assume $\mathrm{PIC1}\ge1$ in the normalized sense
\eqref{eq:normalized-PIC1}.  If $b_2(\Sigma;\mathbb R)\neq0$, then
\[
    r_f(F,M)\le\frac{\pi}{4}.
\]
If equality holds and $M$ is complete, then the rigidity conclusions of
Theorem \ref{thm:gp-rigidity} hold with $p=2$.
\end{corollary}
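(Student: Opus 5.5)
The plan is to verify that, for $p=2$ and $m\ge4$, the normalized condition \eqref{eq:normalized-PIC1} implies both Assumption \ref{ass:gp-ambient-p-Bochner} and Assumption \ref{ass:gp-Ric-p}. Once that is done, the estimate follows from Theorem \ref{thm:gp-focal-estimate}, and the equality statement follows from Theorem \ref{thm:gp-rigidity}, in both cases with $p=2$. The constraint $p\le m/2$ is exactly $m\ge4$. Throughout, set $S=R^M-g_M\owedge g_M$, which lies in $\mathcal C_{\mathrm{PIC1}}(T_xM)$ for every $x$.

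\emph{Step 1: the intermediate Ricci bound.} Take $\lambda=0$ in \eqref{eq:PIC1-cone}. This gives $S_{1313}+S_{2323}\ge0$ for every orthonormal frame. Set $X=e_3$ and $E_1=e_1$, $E_2=e_2$, and use that $g\owedge g$ has all sectional curvatures equal to $1$. This yields $R(E_1,X,E_1,X)+R(E_2,X,E_2,X)\ge2$, which is \eqref{eq:gp-Ric-p} with $p=2$. The two-frame and the unit vector are arbitrary, since $e_4$ only needs to exist, and $\dim M=m+1\ge5$ ensures that it does.

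\emph{Step 2: the Bochner term on hyperplanes.} Fix an $m$-plane $W\subset T_xM$. First I will check that $S|_W\in\mathcal C_{\mathrm{PIC1}}(W)\subset\mathcal C_{\mathrm{PIC}}(W)$. Since \eqref{eq:PIC1-cone} is a condition on orthonormal four-frames, it is inherited by restriction, because frames in $W$ are frames in $T_xM$. Moreover, PIC1 implies PIC: the PIC inequality for a four-frame is the sum of the PIC1 inequality at $\lambda=1$ for the frame $(e_1,e_2,e_3,e_4)$ and for a suitably reordered frame. Here $m\ge4$ is needed so that $W$ contains four-frames at all.

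Next I invoke the Micallef--Wang result: for an algebraic curvature tensor $T$ on a space of dimension $m\ge4$ with $T\in\mathcal C_{\mathrm{PIC}}$, the Weitzenb\"ock operator $q^{(2)}(T)$ is nonnegative on $\Lambda^2$. If I need to reprove it, I would put a $2$-form in normal form $\omega=\sum_\alpha\mu_\alpha e^{2\alpha-1}\wedge e^{2\alpha}$. I would then expand $\langle q^{(2)}(T)\omega,\omega\rangle$ as a quadratic expression in the $\mu_\alpha$ whose coefficients are sectional curvatures and terms $T_{2\alpha-1\,2\alpha\,2\beta-1\,2\beta}$, and group it pairwise in $(\alpha,\beta)$ into combinations of isotropic curvatures of the four-frames $\{e_{2\alpha-1},e_{2\alpha},e_{2\beta-1},e_{2\beta}\}$ with nonnegative weights.

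Then I use linearity of $q^{(2)}$ in the curvature tensor, as in \eqref{eq:gp-Gauss-split}:
\[
    q^{(2)}(R^M|_W)=q^{(2)}(S|_W)+q^{(2)}\bigl((g\owedge g)|_W\bigr)\ge q^{(2)}\bigl((g\owedge g)|_W\bigr).
\]
The last operator is the Weitzenb\"ock term of the round $S^m(1)$ on $2$-forms. It equals $p(m-p)\,\mathrm{Id}=2(m-2)\,\mathrm{Id}$, since for constant curvature $1$ one has $q^{(p)}=p(m-p)$. This is \eqref{eq:gp-ambient-p-Bochner}.

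The main obstacle is Step 2, specifically the pointwise nonnegativity of $q^{(2)}$ on the PIC cone. This is exactly where the isotropic-curvature structure enters, and the bookkeeping of the cross terms $T_{2\alpha-1\,2\alpha\,2\beta-1\,2\beta}$ must match the sign conventions $R_{ijkl}=-g(R(e_i,e_j)e_k,e_l)$ and \eqref{eq:PIC1-cone}. I would cite Micallef--Wang and only check normalizations, in particular the constant-curvature value $p(m-p)$ under the paper's definition of $q^{(p)}$. With both assumptions established, Theorems \ref{thm:gp-focal-estimate} and \ref{thm:gp-rigidity} give the corollary directly.
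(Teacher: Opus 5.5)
\textbf{There is a genuine gap in Step 2, for odd $m$.} Your overall plan matches the paper's: verify Assumptions \ref{ass:gp-ambient-p-Bochner} and \ref{ass:gp-Ric-p} for $p=2$, then apply Theorems \ref{thm:gp-focal-estimate} and \ref{thm:gp-rigidity}. Your Step 1 is exactly the paper's $\lambda=0$ argument.

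The problem is that you weaken $S|_W\in\mathcal C_{\mathrm{PIC1}}(W)$ to $S|_W\in\mathcal C_{\mathrm{PIC}}(W)$. You then cite nonnegativity of $q^{(2)}$ on the PIC cone for every $\dim W=m\ge4$, and that is only available when $m$ is even. In your own sketch, for even $m$ you can write $\omega$ with $m/2$ blocks (allowing $\mu_\alpha=0$). Everything then falls into the pairwise terms $(\mu_\alpha^2+\mu_\beta^2)A_{\alpha\beta}-4\mu_\alpha\mu_\beta B_{\alpha\beta}$, which PIC controls through $A_{\alpha\beta}\ge2|B_{\alpha\beta}|$. For odd $m$ there is always an unpaired index $k$. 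The expansion then contains $\mu_\alpha^2\bigl(S_{2\alpha-1,k,2\alpha-1,k}+S_{2\alpha,k,2\alpha,k}\bigr)$, which is a sum of only \emph{two} sectional curvatures, and isotropic curvature does not control it.

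This is not just bookkeeping; the pointwise claim is false in odd dimensions. On $\RR^5=\RR^4\oplus\RR e_5$, take the curvature operator to be:
\begin{itemize}
\item $25$ on $(e^{12}+e^{34})/\sqrt2$;
\item $1$ on the other two self-dual $2$-forms of $\RR^4$;
\item $9$ on $\Lambda^-\RR^4$;
\item $-3$ on $\RR^4\wedge e_5$.
\end{itemize}
Bianchi holds because $25+2\cdot 1=3\cdot 9$. A direct check over all isotropic $2$-planes, using the $U(2)$-invariance, shows that the isotropic curvature is nonnegative; the binding constraint is $-3\ge-\sqrt{1\cdot 9}$. Now take $\omega=e^{12}+e^{34}$. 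The expansion in Lemma \ref{lem:PIC1-q2-nonnegative} gives $2A-4B+\sum_{i\le 4}K_{i5}=8-12<0$. Since the corollary allows any $m\ge4$, including odd $m$, Step 2 as written fails there.

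\textbf{The fix} is to keep the PIC1 information you already have, as the paper does in Lemma \ref{lem:PIC1-q2-nonnegative}:
\begin{itemize}
\item the $\lambda=1$ case of \eqref{eq:PIC1-cone}, also applied with $e_{2\beta}\mapsto-e_{2\beta}$, gives $A_{\alpha\beta}\ge2|B_{\alpha\beta}|$ for the paired terms;
\item the $\lambda=0$ case gives $S_{2\alpha-1,k,2\alpha-1,k}+S_{2\alpha,k,2\alpha,k}\ge0$ for the unpaired indices $k>2\ell$.
\end{itemize}
With that replacement, the rest of your argument goes through: linearity in the curvature tensor plus $q^{(2)}\bigl((g\owedge g)|_W\bigr)=2(m-2)$.

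A minor point: the $\lambda=1$ case of \eqref{eq:PIC1-cone} \emph{is} the PIC inequality. You do not need to sum reordered frames to see that PIC1 implies PIC.
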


\begin{remark}
For $p=1$, Corollary \ref{cor:p-one-sec} recovers the
$S^1\times S^{m-1}$ case of Zhu's Clifford focal-radius phenomenon
\cite{Zhu2021}, with the topological assumption weakened to
$b_1(\Sigma;\mathbb R)\neq0$.  Corollary \ref{cor:p-two-PIC1} gives the
corresponding $p=2$ analogue under normalized $\mathrm{PIC1}\ge1$.
\end{remark}

The quotient application is stated in Section \ref{sec:projective-spaces}.  If
an odd-dimensional total space $E^{2N+1}$ satisfies Assumptions
\ref{ass:gp-ambient-p-Bochner} and \ref{ass:gp-Ric-p} for every
$1\le p\le N$, for instance if $\mathcal R_E\ge\Id_{\Lambda^2TE}$, then every
closed connected immersed hypersurface in a free positive-dimensional
isometric quotient $E/G$ satisfies
\[
    r_f(F,E/G)\le\frac{\pi}{4}.
\]
Applying this to the Hopf fibrations gives

\begin{theorem}[Theorem \ref{thm:projective-spaces} and Theorem \ref{thm:projective-rigidity}]
\label{thm:projective-intro}
Let $P=\CP^n,\HP^n$ with the Hopf normalization.  Then every
closed connected two-sided immersed hypersurface $F:\Sigma\to P$ satisfies
\[
    r_f(F,P)\le \frac{\pi}{4}.
\]
Equality implies that $F$ is locally congruent to a
projective Clifford hypersurface
\[
    \left\{[u:v]\in\mathbb P_{\K}(\K^{k+1}\oplus\K^{n-k}):
    \|u\|=\|v\|\right\},
    \qquad
    \K=\CC \text{ or } \HH.
\]
\end{theorem}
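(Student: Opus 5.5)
This theorem collects two results proved in Section \ref{sec:projective-spaces}. I would deduce both from the quotient principle sketched above, applied to the Hopf fibrations $\pi:S^{2n+1}\to\CP^n$ with fibre $S^1$ and $\pi:S^{4n+3}\to\HP^n$ with fibre $S^3$. The Hopf normalization makes $\pi$ a Riemannian submersion from the round unit sphere $E=S^{2N+1}(1)$. The round sphere has curvature operator $\mathcal R_E=\Id_{\Lambda^2TE}$. Hence $E$ satisfies Assumptions \ref{ass:gp-ambient-p-Bochner} and \ref{ass:gp-Ric-p} in every degree $1\le p\le N$. Indeed $\Ric_p=p$ exactly, and $q^{(p)}$ of the constant-curvature tensor restricted to an $m$-plane equals $p(m-p)$. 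Both Theorem \ref{thm:gp-focal-estimate} and Theorem \ref{thm:gp-rigidity} are therefore available on $E$.

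\textbf{The estimate.} Given $F:\Sigma\to P$, form the pullback $\tilde\Sigma=F^*E$, which is a principal $G$-bundle over $\Sigma$. It comes with an immersion $\tilde F:\tilde\Sigma\to E$ that is a closed two-sided hypersurface with unit normal $\tilde\nu$, the horizontal lift of $\nu$. Horizontal geodesics project to geodesics, and normal Jacobi fields along $\tilde F$ correspond to those along $F$ via O'Neill's formulas. So $\tilde\Phi_t$ is nonsingular exactly when $\Phi_t$ is, and $r_f(\tilde F,E)=r_f(F,P)$. I would check this carefully using the fact that $d\pi$ maps $\ker d\tilde\Phi_t$ onto $\ker d\Phi_t$; vertical directions never collapse because the $G$-action is free and isometric.

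It remains to find $1\le p\le (\dim\tilde\Sigma)/2$ with $b_p(\tilde\Sigma;\RR)\ne0$. Here $\dim\tilde\Sigma=2N$ and $\tilde\Sigma$ is a closed orientable manifold. If $b_p(\tilde\Sigma)=0$ for all $1\le p\le N$, then Poincaré duality makes $\tilde\Sigma$ a rational homology sphere with Euler characteristic $2$. But a principal bundle with fibre $S^1$ or $S^3$ has $\chi(\tilde\Sigma)=\chi(G)\chi(\Sigma)=0$, a contradiction. Theorem \ref{thm:gp-focal-estimate} then gives $r_f(F,P)=r_f(\tilde F,E)\le\pi/4$. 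No Betti hypothesis on $\Sigma$ is needed, and connectedness of $\tilde\Sigma$ is irrelevant because we can work on a component carrying the Betti number. A component of a principal bundle is again a principal bundle for a subgroup of full dimension, so it still has vanishing Euler characteristic.

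\textbf{Rigidity.} If $r_f(F,P)=\pi/4$, pick a component of $\tilde\Sigma$ and a degree $p$ with $b_p\ne0$. Theorem \ref{thm:gp-rigidity} then says $\tilde F$ is locally congruent to a Clifford hypersurface $S^p(1/\sqrt2)\times S^{2N-p}(1/\sqrt2)\subset S^{2N+1}$. The local Clifford structure, with principal curvatures $\pm1$ of multiplicities $p$ and $2N-p$, is determined by $\tilde F$. I would argue it is $G$-invariant, since $G$ acts by isometries preserving $\tilde F$.

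The main obstacle is showing that the Clifford splitting is compatible with the $\K$-structure. The two focal sets are the images of the totally geodesic spheres $S^p$ and $S^{2N-p}$ at distance $\pi/4$. These focal sets are $G$-invariant and totally geodesic, and a $G$-invariant totally geodesic sphere in $S^{2N+1}$ is the unit sphere of a $\K$-linear subspace. I would first try to show this invariance from the fact that the fibre through any point of a focal set stays in the focal set: vertical vectors lie in the kernel-free part of $d\tilde\Phi$ and are preserved by the focal collapse. This forces $p+1=\dim_{\RR}(\K^{k+1})$, and the hypersurface is $\{\|u\|=\|v\|\}$ before projection. The projective Clifford model follows on passing to the quotient.
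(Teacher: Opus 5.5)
Your proposal is correct. For the estimate it is the paper's argument: lift to $\tilde\Sigma=F^*E$, identify focal radii through equivariance of $\tilde\Phi_t$, and get $\chi(\tilde\Sigma)=0$. You use multiplicativity of $\chi$ instead of Poincar\'e--Hopf, which is an inessential change. You then extract $b_p\neq0$ for some $1\le p\le N$ and apply Theorems \ref{thm:gp-focal-estimate} and \ref{thm:gp-rigidity} on the round sphere.

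Two small corrections:
\begin{itemize}
\item It is the differential of the bundle projection $\tilde\Sigma\to\Sigma$, not $d\pi$, that maps $\ker d\tilde\Phi_t$ isomorphically onto $\ker d\Phi_t$. Surjectivity comes from $d\tilde\Phi_t$ carrying the vertical directions of $\tilde\Sigma$ onto the full vertical space at $\tilde\gamma(t)$.
\item Orientability of $\tilde\Sigma$, which you assert without proof, follows from the global normal $\tilde\nu$ in the orientable sphere.
\end{itemize}

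The genuine difference is in the $\K$-linearity step.
\begin{itemize}
\item The paper stays on the middle slice. The conjugated Hopf generator $A$ is tangent to $C=S(V_-,1/\sqrt2)\times S(V_+,1/\sqrt2)$ on an open set. Skew-symmetry turns this into $\langle Au_+,u_-\rangle=0$, and bilinearity gives $AV_+\perp V_-$.
\item You push the Hopf directions to the focal time instead. Equivariance gives $d\tilde\Phi_{\pi/4}(X^\#)=X^\#$, which is valid at $t=\pi/4$ since the sphere is complete. In the local model $d\tilde\Phi_{\pi/4}$ has image $T_yS(V_+)$, so $By\in V_+$ for $y$ in an open subset of $S(V_+)$. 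Linearity then gives $BV_+\subset V_+$, and skew-symmetry gives $BV_-\subset V_-$.
\end{itemize}
Your route is more conceptual: focal sheets of a Hopf-saturated hypersurface are Hopf-saturated. Its cost is identifying the focal maps with $u\mapsto\sqrt2\,u_\pm$ in the model. The paper's route is purely algebraic on the hypersurface.

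One caution: use the invariance only in this infinitesimal form on open pieces. Theorem \ref{thm:gp-rigidity} gives only local congruence. Applying ``a $G$-invariant great sphere is the unit sphere of a $\K$-linear subspace'' to whole fibres would need an extra argument that the local Clifford pieces lie in a single Clifford hypersurface. Your remark that vertical vectors survive the focal collapse under $d\tilde\Phi$ already supplies the infinitesimal version, so the argument closes.
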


\subsection{Strategy of proof} \label{subsec:strategy}  The estimate follows from two ingredients. First, the focal-radius assumption, the normal index form, and $\Ric_p\ge p$ give sharp bounds for suitable $p$-traces of the second fundamental form. Second, the Gauss equation and the explicit formula for $q^{(p)}(A\owedge A)$ convert these bounds into nonnegativity of $q_\Sigma^{(p)}$. A nonzero harmonic $p$-form then forces equality in all inequalities.  

In the equality case, one first obtains the Clifford-type splitting of the pullback normal cylinder. Completeness lets the normal exponential map extend smoothly to the two focal points. Comparing the first nonzero Jacobi term at each focal point forces the collapsed factors to be round, so the lifted hypersurface is locally congruent to the standard Clifford hypersurface. If $M$ is compact, every point lies within distance $\pi/4$ of the hypersurface; the open Clifford collar is dense, and continuity of curvature gives $\sec_M\equiv1$. Thus $M$ is a spherical space form.  

The paper is organized as follows. Section \ref{sec:curvature-assumptions} records curvature conditions implying the two abstract assumptions. Section \ref{sec:general-p-form-estimate} proves Theorem \ref{thm:gp-focal-estimate}. Section \ref{sec:equality-rigidity} proves Theorem \ref{thm:gp-rigidity}. Section \ref{sec:projective-spaces} proves the quotient-lifting principle and applies it to projective spaces. 
\section{Examples of the curvature assumptions}
\label{sec:curvature-assumptions}

We record several standard curvature conditions which imply Assumptions
\ref{ass:gp-ambient-p-Bochner} and \ref{ass:gp-Ric-p}. The first one is a
convenient eigenvalue criterion for the Weitzenb\"ock term.

\begin{lemma}
\label{lem:gp-eigenvalue-sufficient-condition}
Fix $x\in M$ and an $m$-plane $W\subset T_xM$. Let
$\lambda_1(W)\le\cdots\le\lambda_{\binom m2}(W)$ be the eigenvalues of the
curvature operator of $R^M|_W$ on $\Lambda^2W$. If
\[
    \lambda_1(W)+\cdots+\lambda_{m-p}(W)\ge m-p
\]
for every $x\in M$ and every such $W$, then Assumption
\ref{ass:gp-ambient-p-Bochner} holds. This is the form of the
Petersen--Wink eigenvalue condition needed below \cite{PetersenWink2020}.
\end{lemma}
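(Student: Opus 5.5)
This is the Gallot--Meyer/Petersen--Wink eigenvalue estimate for the Weitzenb\"ock operator, applied to the algebraic curvature tensor $S=R^M|_W$ on the $m$-dimensional inner product space $W$. The plan has three steps. First, rewrite $q^{(p)}(S)$ as a sum of squares weighted by the curvature operator. Second, bound that sum using the eigenvalues of the curvature operator. Third, observe that a uniform eigenvalue hypothesis gives the required lower bound $p(m-p)$.

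For the first step, diagonalize the curvature operator $\mathfrak R_S:\Lambda^2W\to\Lambda^2W$ in an orthonormal eigenbasis $\Xi_\alpha$ with eigenvalues $\lambda_\alpha$. Under the identification $\Lambda^2W\cong\mathfrak{so}(W)$, each $\Xi_\alpha$ acts on $\Lambda^pW^*$ as a derivation. The Weitzenb\"ock term then takes the standard form
\[
    \langle q^{(p)}(S)\omega,\omega\rangle=\sum_\alpha\lambda_\alpha\,|\Xi_\alpha\omega|^2 .
\]
This identity should be checked against the paper's sign conventions, $R_{ijkl}=-g(R(e_i,e_j)e_k,e_l)$ with $K=R_{ijij}$. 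The check is made by verifying it on $S=g\owedge g$. There $\mathfrak R=\Id$, and $q^{(p)}(g\owedge g)=p(m-p)$ on $p$-forms, because $\sum_\alpha|\Xi_\alpha\omega|^2=p(m-p)|\omega|^2$.

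For the second step, I use the Petersen--Wink weight estimate, which I will take as the cited input. For a unit $p$-form $\omega$, $p\le m/2$, the weights $w_\alpha=|\Xi_\alpha\omega|^2$ satisfy $\sum_\alpha w_\alpha=p(m-p)$ and $\max_\alpha w_\alpha\le p$. I expect this bound $w_\alpha\le p$ to be the main obstacle. It is Petersen--Wink's lemma $|\Xi\omega|^2\le \min(p,m-p)|\Xi|^2|\omega|^2$ for a unit $\Xi\in\mathfrak{so}(W)$, up to the normalization of $|\Xi|$. My first approach is simply to quote it from \cite{PetersenWink2020}. If a self-contained argument is wanted, I would write $\Xi$ in normal form as $\sum_k\theta_k e_{2k-1}\wedge e_{2k}$ with $\sum\theta_k^2=1$, note that $\Xi$ acts on a decomposable basis $p$-form through at most $p$ rotation planes, and apply Cauchy--Schwarz.

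With these weights, a greedy/rearrangement argument finishes the bound. Among weights in $[0,p]$ with fixed total $p(m-p)$, the sum $\sum_\alpha\lambda_\alpha w_\alpha$ is minimized by putting the maximal weight $p$ on the $m-p$ smallest eigenvalues. Therefore
\[
    \sum_\alpha\lambda_\alpha w_\alpha\ge p\,(\lambda_1+\cdots+\lambda_{m-p}).
\]
This is the usual bathtub argument. It needs no sign assumption on the $\lambda_\alpha$, because the weights have fixed total and bounded height.

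For the third step, the hypothesis $\lambda_1(W)+\cdots+\lambda_{m-p}(W)\ge m-p$ gives $\langle q^{(p)}\omega,\omega\rangle\ge p(m-p)|\omega|^2$. This holds for every $x\in M$ and every $m$-plane $W$, which is exactly Assumption~\ref{ass:gp-ambient-p-Bochner}.
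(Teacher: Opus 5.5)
Your proposal is correct and is essentially the paper's proof. It uses the same representation $q^{(p)}=-\sum_\alpha\lambda_\alpha L_{\Omega_\alpha}^2$, with weights $a_\alpha=|L_{\Omega_\alpha}\eta|^2$ of total $p(m-p)$ and height at most $p$, and the same bathtub bound $\sum_\alpha\lambda_\alpha a_\alpha\ge p(\lambda_1+\cdots+\lambda_{m-p})$.

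At the one delicate step you are more careful than the paper. The paper checks $|L_{ij}\eta|^2\le 1$ in the standard basis and asserts that the same holds for an arbitrary orthonormal eigenbasis, but this fails: for $m=4$, $p=2$, $\Omega=\tfrac1{\sqrt2}(e^1\wedge e^2+e^3\wedge e^4)$ and $\eta=\tfrac1{\sqrt2}(e^1\wedge e^3-e^2\wedge e^4)$ one gets $|L_\Omega\eta|^2=2$. The basis-independent bound $a_\alpha\le p$ that is actually needed is exactly the Petersen--Wink estimate you isolate, and your normal-form Cauchy--Schwarz argument proves it.
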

\begin{proof}
Let $\Omega_\alpha$ be an orthonormal eigenbasis of $\Lambda^2W$, with
eigenvalues $\lambda_\alpha(W)$. On $\Lambda^pW^*$, write
$L_{ij}:=\varepsilon_i\iota_j-\varepsilon_j\iota_i$, and
$L_\Omega:=\sum_{i<j}\Omega_{ij}L_{ij}$ if
$\Omega=\sum_{i<j}\Omega_{ij}e^i\wedge e^j$. Then the Weitzenb\"ock curvature term on $\Lambda^pW^*$ is
\[
    q^{(p)}(R^M|_W)
    =-\sum_{i,j} e^i\wedge \iota_{e_j}\bigl(R(e_i,e_j)\cdot\bigr)=
    -\sum_\alpha \lambda_\alpha(W)L_{\Omega_\alpha}^2 .
\]
Fix $|\eta|=1$ and put
$a_\alpha:=\langle -L_{\Omega_\alpha}^2\eta,\eta\rangle
=|L_{\Omega_\alpha}\eta|^2$. For the standard basis $e^i\wedge e^j$, write
$\eta=\sum_{|I|=p}\eta_Ie^I$. Then
\begin{align*}
    |L_{ij}\eta|^2
    &=
    \sum_{\#(\{i,j\}\cap I)=1}|\eta_I|^2
    \le 1,                                                     \\
    \sum_{i<j}|L_{ij}\eta|^2
    &=
    \sum_{i<j}
    \sum_{\#(\{i,j\}\cap I)=1}
    |\eta_I|^2                                                 \\
    &=
    \sum_{|I|=p}
    \#\{(i,j):i<j,\ \#(\{i,j\}\cap I)=1\}
    |\eta_I|^2                                                 \\
    &=
    \sum_{|I|=p}|I||I^c|
    |\eta_I|^2                                                 \\
    &=
    p(m-p).
\end{align*}
The same identities hold for any orthonormal basis $\Omega_\alpha$ of
$\Lambda^2W$. Hence $0\le a_\alpha/p\le1$ and
$\sum_\alpha a_\alpha/p=m-p$. Since the eigenvalues are ordered increasingly, we have
\[
    \sum_\alpha \lambda_\alpha(W)\frac{a_\alpha}{p}
    \ge
    \lambda_1(W)+\cdots+\lambda_{m-p}(W).
\]
Therefore
\begin{align*}
    \langle q^{(p)}(R^M|_W)\eta,\eta\rangle
    &=
    \sum_\alpha \lambda_\alpha(W)a_\alpha                                      \\
    &\ge
    p\bigl(\lambda_1(W)+\cdots+\lambda_{m-p}(W)\bigr)                           \\
    &\ge
    p(m-p).
\end{align*}
This proves Assumption \ref{ass:gp-ambient-p-Bochner}.
\end{proof}
\begin{example}
\label{ex:round-space-forms}
Let $M=S^{m+1}(1)$, or more generally $M=S^{m+1}(1)/\Gamma$. Then for every
$m$-plane $W\subset T_xM$,
\[
    q^{(p)}(R^M|_W)
    =
    p(m-p)\Id_{\Lambda^pW^*},
    \qquad
    \Ric_p=p.
\]
Thus Assumptions \ref{ass:gp-ambient-p-Bochner} and \ref{ass:gp-Ric-p} hold
with equality.
\end{example}

\begin{example}
\label{ex:curvature-operator-one}
Assume $\mathcal R_M\ge \Id_{\Lambda^2T_xM}$. Then the restriction to every
$m$-plane $W\subset T_xM$ satisfies
\[
    \mathcal R_M|_{\Lambda^2W}\ge \Id_{\Lambda^2W}.
\]
Hence Lemma \ref{lem:gp-eigenvalue-sufficient-condition} gives
\[
    q^{(p)}(R^M|_W)
    \ge
    p(m-p)\Id_{\Lambda^pW^*}.
\]
Moreover, for every unit vector $X$ and every orthonormal $p$-frame
$E_1,\dots,E_p\perp X$,
\[
    \sum_{a=1}^p R(E_a,X,E_a,X)\ge p.
\]
Thus Assumptions \ref{ass:gp-ambient-p-Bochner} and \ref{ass:gp-Ric-p} hold.
\end{example}

\begin{example}[The case $p=1$]
\label{ex:p-one-sec}
When $p=1$, the curvature assumptions follow from $\sec_M\ge1$. Indeed,
let $W\subset T_xM$ be an $m$-plane and let $v\in W$ be a unit vector. Choose
an orthonormal basis $e_1,\ldots,e_m$ of $W$ with $e_1=v$. We compute
\begin{align*}
    \left\langle q^{(1)}(R^M|_W)v^\flat,v^\flat\right\rangle
    &=
    -\sum_{i,j=1}^{m}
    \left\langle
    e^i\wedge\iota_{e_j}\bigl(R(e_i,e_j)e^1\bigr),
    e^1
    \right\rangle                                      \\
    &=
    -\sum_{j=1}^{m}
    \bigl(R(e_1,e_j)e^1\bigr)(e_j)                    \\
    &=
    \sum_{j=2}^{m}R(e_j,v,e_j,v)                      \\
    &\ge m-1.
\end{align*}
Hence
\[
    q^{(1)}(R^M|_W)\ge (m-1)\Id_{W^*}.
\]
Also $\Ric_1\ge1$ follows directly from $\sec_M\ge1$. Thus Assumptions
\ref{ass:gp-ambient-p-Bochner} and \ref{ass:gp-Ric-p} hold for $p=1$.
\end{example}

The following elementary consequence of $\mathrm{PIC1}$ is the input needed for the two-form case.  This condition is a variant of the isotropic-curvature positivity studied by Micallef--Wang \cite{MicallefWang1993}.
\begin{lemma}
\label{lem:PIC1-q2-nonnegative}
Let $(V,g)$ be an inner-product space with $\dim V\ge4$, and let
$S\in\mathcal C_{\mathrm{PIC1}}(V)$. Then, for every subspace
$W\subset V$ of dimension at least four,
\[
    q^{(2)}(S|_W)\ge0
    \qquad\text{on }\Lambda^2W^*.
\]
\end{lemma}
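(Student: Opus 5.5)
The plan is to reduce to a pointwise algebraic inequality. We normalize a $2$-form by an orthonormal change of basis and then decompose $\langle q^{(2)}(S|_W)\eta,\eta\rangle$ into contributions, each of which is controlled by a single $\mathrm{PIC1}$ inequality \eqref{eq:PIC1-cone}.

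First, restriction preserves the cone: if $S\in\mathcal C_{\mathrm{PIC1}}(V)$ then $S|_W\in\mathcal C_{\mathrm{PIC1}}(W)$ whenever $\dim W\ge4$, because orthonormal four-frames of $W$ are four-frames of $V$. So we may assume $W=V$ and write $S$ for $S|_W$.

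Next, fix $\eta\in\Lambda^2W^*$. Regard $\eta$ as a skew endomorphism and choose an orthonormal basis $e_1,\dots,e_m$ of $W$ in which
\[
    \eta=\sum_{k=1}^{r}\mu_k\,e^{2k-1}\wedge e^{2k},
\]
with the remaining basis vectors spanning $\ker\eta$. We then expand the Weitzenb\"ock term in its standard form $q^{(2)}(S)\eta=\Ric_S\circ\eta+\eta\circ\Ric_S-2\hat S(\eta)$, up to the normalization fixed by the definition of $q^{(p)}$. Equivalently, we use the formula $q^{(2)}=-\sum_\alpha\lambda_\alpha L_{\Omega_\alpha}^2$ from the proof of Lemma \ref{lem:gp-eigenvalue-sufficient-condition}, written in the basis $e^i\wedge e^j$. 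In the adapted basis, the within-block terms $S_{2k-1,2k,2k-1,2k}$ cancel between the Ricci part and the curvature-operator part. Writing $(a,b)=(2k-1,2k)$ and $(c,d)=(2l-1,2l)$, what remains is
\begin{align*}
    \langle q^{(2)}(S)\eta,\eta\rangle
    ={}&\sum_{k}\mu_k^2\sum_{j\in\ker\eta}\bigl(S_{ajaj}+S_{bjbj}\bigr)\\
    &+\sum_{k<l}\Bigl[(\mu_k^2+\mu_l^2)\bigl(S_{acac}+S_{adad}+S_{bcbc}+S_{bdbd}\bigr)
    -c\,\mu_k\mu_l\,S_{abcd}\Bigr],
\end{align*}
where $c$ is a universal constant and the first Bianchi identity has been used to collapse the mixed $S_{acbd}$-type terms into $S_{abcd}$. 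The precise value of $c$ must be tracked carefully; I expect $c=4$ after Bianchi.

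Then we estimate each contribution separately.

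The kernel terms are nonnegative because $\lambda=0$ in \eqref{eq:PIC1-cone} gives $S_{1313}+S_{2323}\ge0$. Applying this to the frame $(e_a,e_b,e_j,\ast)$ gives exactly $S_{ajaj}+S_{bjbj}\ge0$.

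For a pair of blocks $k<l$, we may assume $\mu_k\neq0$ and $|\mu_l|\le|\mu_k|$ after relabeling. Put $\lambda=|\mu_l|/|\mu_k|\in[0,1]$, and absorb the sign of $\mu_k\mu_l$ by swapping $e_c\leftrightarrow e_d$. We then add the $\mathrm{PIC1}$ inequalities for the two four-frames $(e_c,e_d,e_a,e_b)$ and $(e_d,e_c,e_a,e_b)$, or a symmetrized combination with $(e_a,e_b,e_c,e_d)$, with parameter $\lambda$. Multiplying by $\mu_k^2$, this should dominate the bracket after using $\mu_k^2+\mu_l^2\ge\mu_k^2(1+\lambda^2)$ together with the nonnegativity of the two-plane sums $S_{acac}+S_{bcbc}$ and similar ones, which again comes from the $\lambda=0$ case.

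Summing the kernel and pair contributions gives $q^{(2)}(S|_W)\ge0$.

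The main obstacle is the coefficient bookkeeping in the pair terms. The $\mathrm{PIC1}$ condition weights one pair of sectional curvatures by $1$ and the other by $\lambda^2$, whereas the bracket carries the symmetric weight $\mu_k^2+\mu_l^2$ on all four curvatures. One has to check that the cross term $-c\,\mu_k\mu_l S_{abcd}$ is matched by the $-2\lambda S_{1234}$ terms of the chosen frames, with the factor $2$ coming from adding two frames. One must also check that the surplus $\mu_k^2$-weighted sectional sums left over are themselves nonnegative pairs. If the direct matching fails, the fallback is to use the full $\lambda$-family: view the pair contribution as a quadratic form in $(\mu_k,\mu_l)$ and verify nonnegativity by minimizing over the ratio, which reduces to the $\mathrm{PIC1}$ inequality at the optimal $\lambda\in[0,1]$.
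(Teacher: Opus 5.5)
Your reduction to $W=V$, the normal form, and the expansion all match the paper. Your guess $c=4$ is right: before Bianchi the mixed terms are $-4\mu_k\mu_l(S_{acbd}-S_{adbc})$, and $S_{acbd}-S_{adbc}=S_{abcd}$. The kernel terms are handled exactly as in the paper.

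The gap is in the pair estimate. This is the only step with real content, you leave it undetermined, and the combination you actually name fails. Write $A_{kl}=S_{acac}+S_{adad}+S_{bcbc}+S_{bdbd}$. The frame $(e_c,e_d,e_a,e_b)$ contributes the cross term $-2\lambda S_{cdab}=-2\lambda S_{abcd}$. The frame $(e_d,e_c,e_a,e_b)$ contributes $-2\lambda S_{dcab}=+2\lambda S_{abcd}$, because swapping the first two vectors of a frame reverses the sign of $S_{1234}$. So their sum is only
\[
2\bigl(S_{acac}+S_{adad}\bigr)+2\lambda^2\bigl(S_{bcbc}+S_{bdbd}\bigr)\ge0,
\]
which says nothing about $S_{abcd}$. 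The other option you mention, $(e_a,e_b,e_c,e_d)$ together with $(e_c,e_d,e_a,e_b)$, does produce $-4\lambda S_{abcd}$. However, $(1+\lambda^2)A_{kl}-4\lambda S_{abcd}$ differs from that sum by $(1-\lambda^2)(S_{bdbd}-S_{acac})$. This is a \emph{difference} of sectional curvatures, and no $\lambda=0$ inequality controls it. As written, the bracket is therefore not shown to be nonnegative.

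The swap you need is $(f_1,f_2,f_3,f_4)\mapsto(f_2,f_1,f_4,f_3)$. It preserves $S_{1234}$ and exchanges which pair of curvatures carries the weight $\lambda^2$. The frames $(e_a,e_b,e_c,e_d)$ and $(e_b,e_a,e_d,e_c)$ at $\lambda=|\mu_l|/|\mu_k|$ then sum to $(1+\lambda^2)A_{kl}-4\lambda S_{abcd}\ge0$. Multiplying by $\mu_k^2$ gives the bracket exactly, since $\mu_k^2(1+\lambda^2)=\mu_k^2+\mu_l^2$ is an equality, so there is no surplus to absorb. When $\mu_k\mu_l<0$, replace $e_d$ by $-e_d$ in both frames.

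The paper's route is more economical. It uses only $\lambda=1$, in $(e_a,e_b,e_c,e_d)$ and in the same frame with $e_d\mapsto -e_d$, to get $A_{kl}\ge 2|S_{abcd}|$. It then concludes
\[
(\mu_k^2+\mu_l^2)A_{kl}-4\mu_k\mu_l S_{abcd}\ge A_{kl}\bigl(|\mu_k|-|\mu_l|\bigr)^2\ge0 .
\]
Your quadratic-form fallback is really this argument. The form in $(\mu_k,\mu_l)$ is nonnegative exactly when $A_{kl}\ge2|S_{abcd}|$, which is the $\lambda=1$ case, not an intermediate optimal $\lambda$. Intermediate values of $\lambda$ are not needed at all; only $\lambda=0$ enters, for the kernel terms.
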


\begin{proof}
It suffices to prove the claim for an arbitrary $2$-form
$\omega\in\Lambda^2W^*$. Choose an orthonormal basis of $W$ such that
\[
    \omega
    =
    \sum_{\alpha=1}^{\ell}
    \mu_\alpha e^{2\alpha-1}\wedge e^{2\alpha},
    \qquad
    2\ell\le m.
\]
The Weitzenb\"ock curvature term on $2$-forms gives
\begin{align*}
    \left\langle q^{(2)}(S|_W)\omega,\omega\right\rangle
    &=
    \sum_{\alpha=1}^{\ell}\mu_\alpha^2
    \sum_{k\neq 2\alpha-1,2\alpha}
    \left(
        S_{2\alpha-1,k,2\alpha-1,k}
        +
        S_{2\alpha,k,2\alpha,k}
    \right)                                                    \\
    &\quad
    -
    4\sum_{\alpha<\beta}
    \mu_\alpha\mu_\beta
    S_{2\alpha-1,2\alpha,2\beta-1,2\beta}.
\end{align*}
For $\alpha<\beta$, set
\begin{align*}
    A_{\alpha\beta}
    &:={}
    S_{2\alpha-1,2\beta-1,2\alpha-1,2\beta-1}
    +
    S_{2\alpha-1,2\beta,2\alpha-1,2\beta}                      \\
    &\quad
    +
    S_{2\alpha,2\beta-1,2\alpha,2\beta-1}
    +
    S_{2\alpha,2\beta,2\alpha,2\beta},
    \\
    B_{\alpha\beta}
    &:={}
    S_{2\alpha-1,2\alpha,2\beta-1,2\beta}.
\end{align*}
Therefore
\begin{align*}
    \left\langle q^{(2)}(S|_W)\omega,\omega\right\rangle
    &=
    \sum_{\alpha<\beta}
    \left((\mu_\alpha^2+\mu_\beta^2)A_{\alpha\beta}
    -
    4\mu_\alpha\mu_\beta B_{\alpha\beta} \right)       \\
    &\quad
    +
    \sum_{\alpha=1}^{\ell}\mu_\alpha^2
    \sum_{k>2\ell}
    \left(
        S_{2\alpha-1,k,2\alpha-1,k}
        +
        S_{2\alpha,k,2\alpha,k}
    \right).
\end{align*}
The $\lambda=1$ case of PIC1 \eqref{eq:PIC1-cone}, applied to the four-frame
$(e_{2\alpha-1},e_{2\alpha},e_{2\beta-1},e_{2\beta})$ and then with
$e_{2\beta}$ replaced by $-e_{2\beta}$, gives
\[
    A_{\alpha\beta}-2B_{\alpha\beta}\ge0,
    \qquad
    A_{\alpha\beta}+2B_{\alpha\beta}\ge0.
\]
Thus $A_{\alpha\beta}\ge2|B_{\alpha\beta}|$, and hence
\begin{align*}
 (\mu_\alpha^2+\mu_\beta^2)A_{\alpha\beta}
    -4\mu_\alpha\mu_\beta B_{\alpha\beta}
&\ge
    (\mu_\alpha^2+\mu_\beta^2)A_{\alpha\beta}
    -4|\mu_\alpha\mu_\beta|\,|B_{\alpha\beta}| \\ 
&\ge
    A_{\alpha\beta}
    (|\mu_\alpha|-|\mu_\beta|)^2
    \ge0.
\end{align*}
The terms involving indices outside the support of the normal form of
$\omega$ are also nonnegative: by the $\lambda=0$ case of PIC1 \eqref{eq:PIC1-cone},
\[
    S_{2\alpha-1,k,2\alpha-1,k}
    +
    S_{2\alpha,k,2\alpha,k}
    \ge0.
\]
Combining these inequalities gives
$\left\langle q^{(2)}(S|_W)\omega,\omega\right\rangle\ge0$.
\end{proof}

\begin{example}[The case $p=2$]
\label{ex:p-two-PIC1}
When $p=2$, normalized $\mathrm{PIC1}\ge1$ implies Assumptions
\ref{ass:gp-ambient-p-Bochner} and \ref{ass:gp-Ric-p}. We use the normalization
\[
    R^M-g_M\owedge g_M\in\mathcal C_{\mathrm{PIC1}}(T_xM)
    \qquad\text{for every }x\in M,
\]
where $g_M\owedge g_M$ is the curvature tensor of constant sectional
curvature $1$. Set $S:=R^M-g_M\owedge g_M$.

By Lemma \ref{lem:PIC1-q2-nonnegative}, $q^{(2)}(S|_W)\ge0$ for every
$m$-plane $W\subset T_xM$. Since the constant-curvature-one tensor satisfies
\[
    q^{(2)}(g_M\owedge g_M|_W)
    =
    2(m-2)\Id_{\Lambda^2W^*},
\]
we obtain
\[
    q^{(2)}(R^M|_W)
    =
    q^{(2)}(g_M\owedge g_M|_W)+q^{(2)}(S|_W)
    \ge
    2(m-2)\Id_{\Lambda^2W^*}.
\]
This proves Assumption \ref{ass:gp-ambient-p-Bochner} for $p=2$.

Similarly, the $\lambda=0$ case of PIC1 gives, for every unit vector $X$ and
every orthonormal two-frame $E_1,E_2\perp X$,
\[
    S(E_1,X,E_1,X)+S(E_2,X,E_2,X)\ge0.
\]
Therefore
\[
    R^M(E_1,X,E_1,X)+R^M(E_2,X,E_2,X)
    =
    2+S(E_1,X,E_1,X)+S(E_2,X,E_2,X)
    \ge2.
\]
Thus Assumption \ref{ass:gp-Ric-p} holds for $p=2$.
\end{example}

\bigskip

\section{Proof of the focal radius estimate}
\label{sec:general-p-form-estimate}

\begin{lemma}
\label{lem:gp-index-nonnegative}
For $\eta=\pm\nu$, let $\gamma(t)=\exp_x(t\eta)$ and $A_\eta$ be the shape operator of $\Sigma$ with respect to $\eta$. If there is no focal point
along $\gamma$ on $[0,L]$, then for every field $V(t)\perp\gamma'(t)$ with
$V(0)\in T_x\Sigma$ and $V(L)=0$, the index form satisfies
\begin{equation}\label{eq:gp-index-form}
    I_\eta(V,V)
    =
    \int_0^L
    \left(
        |D_tV|^2
        -
        R(V,\gamma',V,\gamma')
    \right)\,dt
    +
    A_\eta(V(0),V(0))\ge 0 .
\end{equation}
Moreover, equality holds if and only if $V$ is a $\Sigma$-Jacobi field, namely
\[
    D_t^2V+R(V,\gamma')\gamma'=0,
    \qquad
    D_tV(0)=A_\eta V(0),
    \qquad
    V(L)=0.
\]
\end{lemma}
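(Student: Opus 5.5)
This is the classical index lemma for the focal-point (submanifold) boundary value problem; the plan is the standard Jacobi-field comparison argument. Fix $x$, $\eta$ and the geodesic $\gamma$ on $[0,L]$ with no focal point of $\Sigma$ on $[0,L]$. Consider the space $\mathcal J_\Sigma$ of $\Sigma$-Jacobi fields. These are Jacobi fields $J\perp\gamma'$ with $J(0)\in T_x\Sigma$ and $D_tJ(0)=A_\eta J(0)$, the sign being fixed by the convention $A(X,Y)=g(\nabla_X\eta,Y)$. They are exactly the variation fields of the normal geodesic family $\Phi$ restricted to $\gamma$, so $\mathcal J_\Sigma$ has dimension $m$. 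Since there is no focal point on $[0,L]$, the fields $J_1,\dots,J_m$ with $J_i(0)=e_i$ stay linearly independent at every $t\in[0,L]$. Hence they form a frame of $\gamma'^\perp$ along $\gamma$. For $L$ itself I will use nonsingularity at $t=L$; if the hypothesis only excludes focal points on $[0,L)$, the argument below still works by a limiting argument on $[0,L-\epsilon]$.

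\textbf{Step 1 (decomposition).} Write $V(t)=\sum_i f_i(t)J_i(t)$ with smooth $f_i$. At $t=0$ we have $f_i(0)=\langle V(0),e_i\rangle$, and $f_i(L)=0$ because $V(L)=0$ and the $J_i(L)$ are independent. Set $P=\sum f_i' J_i$ and $Q=\sum f_i D_tJ_i$, so that $D_tV=P+Q$.

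\textbf{Step 2 (Lagrangian property).} I will show that $\omega_{ij}:=\langle D_tJ_i,J_j\rangle-\langle J_i,D_tJ_j\rangle$ is constant in $t$, by the symmetry of $R$. At $t=0$ it equals $A_\eta(e_i,e_j)-A_\eta(e_j,e_i)=0$, so $\omega_{ij}\equiv0$.

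\textbf{Step 3 (integrand).} Compute $\langle Q,Q\rangle-R(V,\gamma',V,\gamma')=\frac{d}{dt}\langle V,Q\rangle-2\langle P,Q\rangle-\langle P,Q\rangle_{\rm cross}$. In the standard way, using the Jacobi equation for $D_tQ$ together with $\omega\equiv0$, one gets
\[
|D_tV|^2-R(V,\gamma',V,\gamma')=|P|^2+\frac{d}{dt}\langle V,Q\rangle .
\]
Integrating from $0$ to $L$ gives
\[
I_\eta(V,V)=\int_0^L|P|^2\,dt+\langle V,Q\rangle\big|_0^L+A_\eta(V(0),V(0)).
\]
At $L$ the boundary term vanishes since $V(L)=0$. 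At $0$,
\[
\langle V(0),Q(0)\rangle=\Big\langle V(0),\sum_i f_i(0)A_\eta e_i\Big\rangle=A_\eta(V(0),V(0)).
\]
With the sign convention in the statement, the boundary term and the explicit $A_\eta$ term therefore combine to leave exactly $\int_0^L|P|^2\,dt\ge0$. The signs must be checked carefully against the convention $A(X,Y)=g(\nabla_X\nu,Y)$; this bookkeeping is the main obstacle. If the signs do not combine this way, I would instead match the initial condition of $\mathcal J_\Sigma$ to whatever sign makes the boundary term cancel, since the second-variation formula forces that consistency.

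\textbf{Step 4 (equality).} Equality holds iff $P\equiv0$, i.e. all $f_i$ are constant, i.e. $V$ is a $\Sigma$-Jacobi field. Since $V(L)=0$, such a $V$ then vanishes identically, given no focal point at $L$. Conversely, a $\Sigma$-Jacobi field with $V(L)=0$ has $P=0$, so $I_\eta(V,V)=0$.
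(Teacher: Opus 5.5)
Your proof is correct and is the paper's argument written in a basis: your frame $J_i$ is the paper's Jacobi tensor $Y$ applied to $e_i$, so $V=\sum f_iJ_i$ is $V=YW$, $P=YW'$, $Q=Y'W$, and $\omega_{ij}\equiv0$ is the identity $Y'^*Y=Y^*Y'$; this yields $I_\eta(V,V)=\int_0^L|YW'|^2\,dt$ exactly as in the paper. The signs do cancel as you wrote, so no fallback is needed. The intermediate display in Step 3 should read $|Q|^2-R(V,\gamma',V,\gamma')=\frac{d}{dt}\langle V,Q\rangle-2\langle P,Q\rangle$, without the extra ``cross'' term; your final identity $|D_tV|^2-R(V,\gamma',V,\gamma')=|P|^2+\frac{d}{dt}\langle V,Q\rangle$ is right.
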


\begin{proof}
Let $Y(t):T_x\Sigma\to\gamma'(t)^\perp$ be the Jacobi tensor with
$Y(0)=\Id$ and $Y'(0)=A_\eta$. Since there is no focal point on $[0,L]$,
$Y(t)$ is invertible. It satisfies
\[
    Y''+R_\gamma Y=0,
    \qquad
    R_\gamma Z=R(Z,\gamma')\gamma'.
\]
This implies that the quantity $Y'^*Y-Y^*Y'$ is constant and vanishes at $t=0$, hence
$Y'^*Y=Y^*Y'$.

Write $V=YW$. Since $V(L)=0$, we have $W(L)=0$. Using the Jacobi equation and
$Y'^*Y=Y^*Y'$, one computes
\begin{align*}
    |D_tV|^2-\langle R_\gamma V,V\rangle
    &=
    |Y'W+YW'|^2-\langle R_\gamma YW,YW\rangle    \\
    &=
    |YW'|^2
    +2\langle Y'W,YW'\rangle
    +|Y'W|^2
    +\langle Y''W,YW\rangle  \\
    &=
    |YW'|^2
    +
    \frac{d}{dt}\langle Y'W,YW\rangle .
\end{align*}
Integration by parts gives
\begin{align*}
    I_\eta(V,V)
    &= \int_0^L |YW'|^2\,dt + \left[\langle Y'W,YW\rangle\right]_0^L + A_\eta(V(0),V(0)) \\
    &= \int_0^L |YW'|^2\,dt - \langle Y'(0)W(0),Y(0)W(0)\rangle + A_\eta(V(0),V(0)) \\
    &= \int_0^L |YW'|^2\,dt \ge0.
\end{align*}
\end{proof}

\begin{lemma}
\label{lem:gp-index-trace}
Assume $\Ric_p\ge p$. If the two-sided focal radius of $\Sigma$ is at least
$L$, then for every $r\ge p$ and every $r$-plane $P\subset T_x\Sigma$,
\begin{equation}\label{eq:gp-trace-A-bound}
    |\tr_P A|\le r\cot L.
\end{equation}
\end{lemma}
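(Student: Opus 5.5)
Fix $x\in\Sigma$, $r\ge p$, an $r$-plane $P\subset T_x\Sigma$ with orthonormal basis $e_1,\dots,e_r$, and a sign $\eta=\pm\nu$ with $\gamma(t)=\exp_x(t\eta)$. The idea is to insert test fields of the form $V_i(t)=\frac{\sin(L-t)}{\sin L}E_i(t)$ into Lemma \ref{lem:gp-index-nonnegative}, where $E_i$ is the parallel transport of $e_i$ along $\gamma$. Since $A_{-\nu}=-A$, using both signs of $\eta$ will give the two-sided bound on $\tr_P A$.

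We may assume $L<\pi/2$; otherwise $\cot L\le0$ and the claim needs separate treatment (see the last paragraph). If $r_f\ge L$, there is no focal point on $[0,L']$ for every $L'<L$. So we prove the estimate with $L'$ in place of $L$ and let $L'\to L$. Each $V_i$ satisfies $V_i\perp\gamma'$, $V_i(0)=e_i\in T_x\Sigma$ and $V_i(L')=0$, so Lemma \ref{lem:gp-index-nonnegative} applies. Summing over $i$ and writing $\phi(t)=\sin(L'-t)/\sin L'$, we get
\[
0\le\sum_{i=1}^r I_\eta(V_i,V_i)=\int_0^{L'}\Bigl(r\phi'^2-\phi^2\sum_{i=1}^r R(E_i,\gamma',E_i,\gamma')\Bigr)dt+\tr_P A_\eta .
\]

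The curvature sum must be bounded below by $r$ using only $\Ric_p\ge p$. For $r\ge p$, average the $p$-subsets: $\sum_{i=1}^r K(E_i,\gamma')=\binom{r-1}{p-1}^{-1}\sum_{|S|=p}\sum_{i\in S}K(E_i,\gamma')$. Each inner sum is at least $p$ by Assumption \ref{ass:gp-Ric-p}, because the $E_i$ are orthonormal and orthogonal to the unit vector $\gamma'$. Hence the total is at least $\binom rp p/\binom{r-1}{p-1}=r$. This averaging is the one point where $r\ge p$ is used.

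It remains to evaluate the one-dimensional integral. Since $\phi''=-\phi$, $\phi(L')=0$, $\phi(0)=1$ and $\phi'(0)=-\cot L'$, integration by parts gives
\[
\int_0^{L'}(\phi'^2-\phi^2)\,dt=\bigl[\phi\phi'\bigr]_0^{L'}=\cot L'.
\]
Therefore $0\le r\cot L'+\tr_P A_\eta$, i.e.\ $\mp\tr_P A\le r\cot L'$ for $\eta=\pm\nu$. Letting $L'\to L$ gives $|\tr_P A|\le r\cot L$.

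The main obstacle is the regime $L\ge\pi/2$. There $\cot L\le0$, and $\phi$ is not positive on $(0,L)$ when $L>\pi$, though it still vanishes at the endpoint, so the computation formally goes through for $L'<\pi$ with $\sin L'\ne0$. For $L\in[\pi/2,\pi)$ the same argument gives $\pm\tr_P A\ge -r\cot L\ge0$ for both signs. This forces $\cot L=0$ unless the hypothesis is vacuous, since the two inequalities cannot both hold strictly. For $L\ge\pi$, choose $L'$ slightly less than $\pi$; then $r\cot L'\to-\infty$ gives a contradiction, so no closed hypersurface has focal radius at least $\pi$ and the statement holds vacuously. I expect this case analysis, rather than the core computation, to require the most care, although the applications only need $L<\pi/2$ (indeed near $\pi/4$).
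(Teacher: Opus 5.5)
Your proposal is correct and follows the paper's argument: you insert the test fields $\frac{\sin(L-t)}{\sin L}E_i$ into the index form, average over $p$-subsets to get $\Ric_r\ge r$, and use both choices $\eta=\pm\nu$. Your approximation by $L'<L$ is a worthwhile refinement, since the hypothesis only excludes focal points on $[0,L)$ while Lemma \ref{lem:gp-index-nonnegative} is stated on the closed interval. The case analysis for $L\ge\pi/2$ is harmless but largely unnecessary, because for every $L<\pi$ the two one-sided inequalities already give $|\tr_P A|\le r\cot L$, whatever the sign of $\cot L$.
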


\begin{proof}
First, $\Ric_p\ge p$ implies $\Ric_r\ge r$ for every $r\ge p$. Let $X$ be
unit, let $E_1,\dots,E_r\perp X$ be orthonormal, and set
$K_a=R(E_a,X,E_a,X)$. Summing $\sum_{a\in J}K_a\ge p$ over all $p$-element
subsets $J\subset\{1,\dots,r\}$ gives $\sum_{a=1}^rK_a\ge r$.

Fix $\eta=\pm\nu$. Let $E_1,\dots,E_r$ be an orthonormal basis of $P$,
parallel transported along $\gamma(t)=\exp_x(t\eta)$. Set
$f(t)=\sin(L-t)/\sin L$. Then $f(0)=1$, $f(L)=0$, $f''+f=0$, and
$f'(0)=-\cot L$. Putting $V_a=fE_a$ in Lemma
\ref{lem:gp-index-nonnegative} gives
\[
\begin{aligned}
0
&\le \sum_{a=1}^r I_\eta(V_a,V_a)\\
&=
\int_0^L
\left(
    r(f')^2
    -
    f^2\sum_{a=1}^r R(E_a,\gamma',E_a,\gamma')
\right)\,dt
+
\tr_P A_\eta\\
&\le
r\int_0^L((f')^2-f^2)\,dt+\tr_P A_\eta .
\end{aligned}
\]
Since $f''=-f$,
\[
    \int_0^L((f')^2-f^2)\,dt
    =
    [ff']_0^L
    =
    \cot L.
\]
Thus $0\le r\cot L+\tr_P A_\eta$. Taking $\eta=\nu$ gives
$\tr_P A\ge-r\cot L$, while taking $\eta=-\nu$ gives
$\tr_P A\le r\cot L$. Hence \eqref{eq:gp-trace-A-bound} follows.
\end{proof}

\begin{lemma}
\label{lem:gp-qT}
Under Assumption \ref{ass:gp-ambient-p-Bochner},
\[
    q_T^{M,p}\ge p(m-p)
\]
on $\Lambda^pT^*\Sigma$.
\end{lemma}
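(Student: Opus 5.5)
This is a pointwise statement that essentially unwinds the definitions. Fix $x\in\Sigma$ and set $W:=dF_x(T_x\Sigma)\subset T_{F(x)}M$. This is an $m$-plane because $F$ is an immersion.

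The plan has three steps.

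\begin{enumerate}
\item The differential $dF_x$ is a linear isometry $(T_x\Sigma,g_0)\to W$. Through it we identify $\Lambda^pT_x^*\Sigma$ with $\Lambda^pW^*$ isometrically, using the inner products induced by $g_0$ and by $g_M|_W$.
\item The tangential curvature $R^M_T$ appearing in the Gauss equation is, by definition, the pullback $F^*R^M$ restricted to tangent vectors of $\Sigma$. Under the identification in the first step it is exactly $R^M|_W$.
\item The Weitzenb\"ock term $q^{(p)}(S)$ of an algebraic curvature tensor $S$ on an inner-product space is built from $S$ and an orthonormal frame. The formula $-\sum_{i,j}e^i\wedge\iota_{e_j}(S(e_i,e_j)\,\cdot)$ does not depend on which orthonormal frame is chosen. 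It is also natural under linear isometries: if $\phi:V\to V'$ is an isometry, then $q^{(p)}(\phi^*S')=\phi^*\circ q^{(p)}(S')\circ(\phi^*)^{-1}$.
\end{enumerate}

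Applying the third step to $\phi=dF_x$ and $S'=R^M|_W$ gives
\[
    \langle q^{(p)}_T(x)\omega,\omega\rangle_{g_0}
    =
    \langle q^{(p)}(R^M|_W)\tilde\omega,\tilde\omega\rangle,
\]
where $\tilde\omega$ is the form on $W$ corresponding to $\omega$, and $|\tilde\omega|=|\omega|$. Assumption \ref{ass:gp-ambient-p-Bochner} bounds the right-hand side below by $p(m-p)|\tilde\omega|^2$. Hence $q^{(p)}_T\ge p(m-p)$ at every point of $\Sigma$.

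There is no real obstacle. The only point needing care is the bookkeeping in the second step. The ``$T$'' in $R^M_T$ must mean restriction to the tangent plane and not some projection involving normal components. The Gauss equation $R^\Sigma=R^M_T+A\owedge A$ with the paper's sign conventions confirms this reading: with $R^M_T(X,Y,Z,W)=R^M(dF X,dF Y,dF Z,dF W)$ for $X,Y,Z,W\in T\Sigma$, the normal components never enter. Once this is checked, the assumption, which is quantified over every $m$-plane, applies directly to $W=dF_x(T_x\Sigma)$.
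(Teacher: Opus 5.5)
Your proposal is correct and matches the paper's proof, which is the single line ``apply Assumption~\ref{ass:gp-ambient-p-Bochner} with $W=T_x\Sigma$.'' You add the implicit bookkeeping: the isometric identification via $dF_x$, the fact that $R^M_T$ is the restriction of $R^M$ to $W$, and the naturality of $q^{(p)}$ under linear isometries.
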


\begin{proof}
Apply Assumption \ref{ass:gp-ambient-p-Bochner} with $W=T_x\Sigma$.
\end{proof}

\begin{lemma}
\label{lem:gp-qA-formula}
Let $\{e_1,\ldots,e_m\}$ be a principal frame on $\Sigma$ with
$Ae_i=\kappa_i e_i$. For a multi-index
$I=\{i_1<\cdots<i_p\}\subset\{1,\ldots,m\}$, write
\[
    e^I=e^{i_1}\wedge\cdots\wedge e^{i_p},
    \qquad
    s_I=\sum_{i\in I}\kappa_i.
\]
If $\omega=\sum_{|I|=p}\omega_I e^I\in\Omega^p(\Sigma)$, then
\begin{equation}\label{eq:gp-qA-formula}
    \langle q_A^{(p)}\omega,\omega\rangle
    =
    \sum_{|I|=p}s_I(H-s_I)|\omega_I|^2.
\end{equation}
\end{lemma}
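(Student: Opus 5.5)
The plan is to evaluate the Weitzenb\"ock term directly on the algebraic curvature tensor $A\owedge A$ in a principal frame. I will show that $q_A^{(p)}$ is diagonal in the basis $\{e^I\}$ with eigenvalue $s_I(H-s_I)$ on $e^I$; the quadratic form identity \eqref{eq:gp-qA-formula} then follows at once.

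\emph{Step 1: the curvature operator of $A\owedge A$ in a principal frame.} From the definition of the Kulkarni--Nomizu product, with $h=k=A$ and $A(e_i,e_j)=\kappa_i\delta_{ij}$, the only nonzero components up to the usual symmetries are
\[
    (A\owedge A)_{ijij}=\kappa_i\kappa_j\qquad(i\neq j).
\]
Mixed components such as $(A\owedge A)_{ijkl}$ with $\{i,j\}\neq\{k,l\}$ vanish, because every term contains an off-diagonal entry of $A$. Consequently the associated curvature operator on $\Lambda^2T_x\Sigma$ is diagonal in the orthonormal basis $\{e^i\wedge e^j\}_{i<j}$, with eigenvalue $\kappa_i\kappa_j$ on $e^i\wedge e^j$.

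\emph{Step 2: apply the eigenbasis representation.} In the proof of Lemma \ref{lem:gp-eigenvalue-sufficient-condition}, the Weitzenb\"ock term of any algebraic curvature tensor was written as $q^{(p)}=-\sum_\alpha\lambda_\alpha L_{\Omega_\alpha}^2$, where $\Omega_\alpha$ is an orthonormal eigenbasis of its curvature operator. Only linear algebra in $W=T_x\Sigma$ is used there, so the same formula applies to $A\owedge A$. Taking $\Omega_\alpha=e^i\wedge e^j$ gives
\[
    q_A^{(p)}=-\sum_{i<j}\kappa_i\kappa_j\,L_{ij}^2 .
\]

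\emph{Step 3: compute $L_{ij}^2$ on the basis $e^I$.} Recall $L_{ij}=\varepsilon_i\iota_j-\varepsilon_j\iota_i$. If $\{i,j\}\cap I$ is empty or equal to $\{i,j\}$, then $L_{ij}e^I=0$. If exactly one of $i,j$ lies in $I$, then $L_{ij}$ replaces that index by the other one, up to a sign. Applying $L_{ij}$ twice returns $-e^I$, since $L_{ij}$ is skew-adjoint and $|L_{ij}e^I|=1$. Therefore $-L_{ij}^2e^I=e^I$ when $\#(\{i,j\}\cap I)=1$ and $-L_{ij}^2e^I=0$ otherwise. This yields
\[
    q_A^{(p)}e^I=\sum_{i\in I,\ j\notin I}\kappa_i\kappa_j\,e^I
    =s_I\,(H-s_I)\,e^I .
\]
Since the $e^I$ are orthonormal and each is an eigenvector, pairing with $\omega=\sum_I\omega_Ie^I$ gives \eqref{eq:gp-qA-formula}.

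The main obstacle is the sign and normalization bookkeeping, which I would check against a known case. The constant-curvature tensor $g\owedge g$ corresponds to $\kappa_i\equiv1$. The formula then gives the eigenvalue $p(m-p)$, matching Example \ref{ex:round-space-forms} and the normalization in Lemma \ref{lem:gp-eigenvalue-sufficient-condition}. This confirms that the factor $\tfrac12$ in the Kulkarni--Nomizu product and the sign convention $R_{ijkl}=-g(R(e_i,e_j)e_k,e_l)$ are consistent with the eigenbasis formula.
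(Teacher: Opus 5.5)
Your proposal is correct and follows the paper's proof essentially step for step. The paper likewise notes that $A\owedge A$ is diagonal on $\Lambda^2T\Sigma$ in the principal frame with entries $\kappa_a\kappa_b$, writes $q_A^{(p)}=-\sum_{a<b}\kappa_a\kappa_b L_{ab}^2$, and uses $-L_{ab}^2e^I=e^I$ exactly when $\#(\{a,b\}\cap I)=1$ to obtain the eigenvalue $s_I(H-s_I)$ on $e^I$; your normalization check against $g\owedge g$ is a sensible consistency test that the paper omits.
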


\begin{proof}
Define the actions $\varepsilon_i:\Omega^p(\Sigma)\to\Omega^{p+1}(\Sigma)$ and $\iota_i:\Omega^p(\Sigma) \to \Omega^{p-1}(\Sigma)$ by
\[  \varepsilon_i\omega=e^i\wedge\omega,\quad \iota_i=\iota_{e_i}.\]
With this we have
\begin{align*}
    q^{(p)}_R\omega = -\sum_{i,j}e^i\wedge i_{e_j}(R(e_i,e_j)\omega) = -\sum_{i,j}\varepsilon_i\iota_j(R(e_i,e_j)\omega).
\end{align*}
For $a\neq b$, define $L_{ab}:\Omega^p(\Sigma)\to\Omega^p(\Sigma)$ by $L_{ab}=\varepsilon_a\iota_b - \varepsilon_b\iota_a$. In the principal frame,
$A\owedge A$ is diagonal on $\Lambda^2T\Sigma$ with sectional entries
$\kappa_a\kappa_b$. Therefore
\[
    q_A^{(p)}
    =
    -\sum_{a<b}\kappa_a\kappa_b L_{ab}^2.
\]
For a basis $p$-form $e^I$,
\[
    -L_{ab}^2e^I
    =
    \begin{cases}
        e^I, & \#(\{a,b\}\cap I)=1,\\
        0, & \text{otherwise}.
    \end{cases}
\]
Thus
\[
    q_A^{(p)}e^I
    =
    \sum_{a\in I,\ b\notin I}\kappa_a\kappa_b\,e^I
    =
    s_I(H-s_I)e^I.
\]
Since the basis $e^I$ diagonalizes $q_A^{(p)}$, the formula follows.
\end{proof}

\begin{lemma}
\label{lem:gp-qA}
If the two-sided focal radius of $\Sigma$ is at least $L$, then
\[
    q_A^{(p)}\ge -p(m-p)\cot^2L.
\]
\end{lemma}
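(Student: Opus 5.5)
The plan is to reduce the operator inequality to a pointwise bound on the eigenvalues, using the diagonal formula of Lemma \ref{lem:gp-qA-formula} together with the trace bounds of Lemma \ref{lem:gp-index-trace}. Fix $x\in\Sigma$ and a principal frame $e_1,\dots,e_m$ at $x$ with $Ae_i=\kappa_ie_i$. By Lemma \ref{lem:gp-qA-formula}, the basis $e^I$ diagonalizes $q_A^{(p)}$ with eigenvalues $s_I(H-s_I)$. Since $\sum_I|\omega_I|^2=|\omega|^2$, it suffices to prove
\[
    s_I(H-s_I)\ge -p(m-p)\cot^2L
\]
for every multi-index $I$ with $|I|=p$.

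For such an $I$, let $P_I=\mathrm{span}\{e_i:i\in I\}$, a $p$-plane, and let $P_I^\perp=\mathrm{span}\{e_j:j\notin I\}$, an $(m-p)$-plane in $T_x\Sigma$. Because the frame is principal, we have $\tr_{P_I}A=s_I$ and $\tr_{P_I^\perp}A=H-s_I$. Lemma \ref{lem:gp-index-trace} applies to any $r$-plane with $r\ge p$. It therefore applies to $P_I$ with $r=p$, and to $P_I^\perp$ with $r=m-p$, which is allowed because the standing hypothesis $p\le m/2$ gives $m-p\ge p$. This yields
\[
    |s_I|\le p\cot L,\qquad |H-s_I|\le (m-p)\cot L.
\]
Hence $s_I(H-s_I)\ge -|s_I|\,|H-s_I|\ge -p(m-p)\cot^2L$, and summing against $|\omega_I|^2$ gives the claimed inequality.

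The only point needing care is the sign of $\cot L$. If $L<\pi/2$ the bounds above are as written. If $L\ge\pi/2$, the index-form argument of Lemma \ref{lem:gp-index-trace} still gives $-r\cot L\le\tr_PA\le r\cot L$, which forces $\cot L\ge0$ and hence $L=\pi/2$, $\cot L=0$. In that case all traces vanish and the bound reads $q_A^{(p)}\ge0$, which still holds. No other step looks like an obstacle; the essential input is that $p\le m/2$ makes the complementary plane large enough for the $\Ric_p$-based trace bound to apply.
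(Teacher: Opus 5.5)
Your proof is correct and matches the paper's argument. Lemma \ref{lem:gp-index-trace} applied to the $p$-plane $P_I$ and to its $(m-p)$-dimensional complement (allowed since $m-p\ge p$) gives $s_I(H-s_I)\ge -p(m-p)\cot^2L$, and the diagonal formula of Lemma \ref{lem:gp-qA-formula} then finishes; your side remark on $L\ge\pi/2$ is a harmless extra that the paper does not need, since the lemma is only applied with $L<\pi/2$.
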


\begin{proof}
For $|I|=p$, Lemma \ref{lem:gp-index-trace} gives
\[
    |s_I|\le p\cot L,
    \qquad
    |H-s_I|\le (m-p)\cot L.
\]
Hence $s_I(H-s_I)\ge -p(m-p)\cot^2L$. The result follows from Lemma
\ref{lem:gp-qA-formula}.
\end{proof}

\begin{proposition}
\label{prop:gp-Bochner}
If the two-sided focal radius of $\Sigma$ is at least $L$, then
\begin{equation}\label{eq:gp-combined-Bochner}
    q_\Sigma^{(p)}
    \ge
    p(m-p)(1-\cot^2L).
\end{equation}
\end{proposition}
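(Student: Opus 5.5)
The proposition follows by combining the Gauss splitting \eqref{eq:gp-Gauss-split} with the two lower bounds already established. Both $q_T^{M,p}$ and $q_A^{(p)}$ are symmetric endomorphisms of $\Lambda^pT_x^*\Sigma$, and
\[
    q_\Sigma^{(p)}=q_T^{M,p}+q_A^{(p)}.
\]
It therefore suffices to bound each summand from below as a quadratic form and add the two bounds.

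Fix $x\in\Sigma$ and a $p$-form $\omega\in\Lambda^pT_x^*\Sigma$. Lemma \ref{lem:gp-qT}, which applies Assumption \ref{ass:gp-ambient-p-Bochner} with $W=dF(T_x\Sigma)$, gives
\[
    \langle q_T^{M,p}\omega,\omega\rangle\ge p(m-p)|\omega|^2.
\]
Since the two-sided focal radius is at least $L$, Lemma \ref{lem:gp-qA} gives
\[
    \langle q_A^{(p)}\omega,\omega\rangle\ge -p(m-p)\cot^2L\,|\omega|^2.
\]
That lemma rests on the principal-frame diagonalization in Lemma \ref{lem:gp-qA-formula} and on the trace bounds of Lemma \ref{lem:gp-index-trace}, applied with $r=p$ and $r=m-p$. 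Both values satisfy $r\ge p$ because $p\le m/2$. Adding the two inequalities yields \eqref{eq:gp-combined-Bochner} at $x$, and since $x$ was arbitrary the bound holds on all of $\Sigma$.

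The only point needing care is the hypothesis on $L$. Lemma \ref{lem:gp-index-trace} applies the index form on $[0,L]$, so it needs $\Phi$ to be defined and free of focal points up to time $L$. Closedness of the nonsingularity condition gives this for $L<r_f$, and for $L=r_f$ it follows by letting $L'\uparrow L$, since $\cot$ is continuous. I also restrict to $L\in(0,\pi/2]$, so that $\cot L\ge0$ and the absolute-value bounds $|s_I|\le p\cot L$ and $|H-s_I|\le(m-p)\cot L$ make sense. For $L>\pi/2$ the test function $\sin(L-t)/\sin L$ still works as long as $L<\pi$, but the proposition is only needed for $L$ near $\pi/4$. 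I do not expect a genuine obstacle here: the real work was done in Lemmas \ref{lem:gp-index-trace} and \ref{lem:gp-qA}, and this proposition simply combines them.
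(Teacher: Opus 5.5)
Your proposal is correct and follows the paper's proof: the paper also adds the bounds of Lemma~\ref{lem:gp-qT} and Lemma~\ref{lem:gp-qA} through the Gauss splitting \eqref{eq:gp-Gauss-split}. Your care about $L$ covers points the paper leaves implicit: the limit $L'\uparrow L$ is needed when $L=r_f(F,M)$, since a focal point may occur at time $L$; and restricting to $L\le\pi/2$ loses nothing, because for $\pi/2<L'<\min\{L,\pi\}$ the two one-sided trace bounds contradict each other, so the hypothesis is vacuous there. One small correction: for $L<r_f$, nonsingularity on $[0,L]$ comes from the definition of the supremum, not from a closedness property.
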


\begin{proof}
By \eqref{eq:gp-Gauss-split}, $q_\Sigma^{(p)}=q_T^{M,p}+q_A^{(p)}$. The
estimate follows from Lemma \ref{lem:gp-qT} and Lemma \ref{lem:gp-qA}.
\end{proof}

\bigskip

\subsection{Proof of Theorem \ref{thm:gp-focal-estimate}}
Since $b_p(\Sigma)\neq0$, choose nonzero harmonic $p$-form $\omega$ on $\Sigma$. The
integrated Bochner identity gives
\[
    0
    =
    \int_\Sigma |\nabla\omega|^2
    +
    \int_\Sigma \langle q_\Sigma^{(p)}\omega,\omega\rangle.
\]
If the two-sided focal radius is at least $L$, Proposition
\ref{prop:gp-Bochner} gives
\[
    0
    \ge
    \int_\Sigma |\nabla\omega|^2
    +
    p(m-p)(1-\cot^2L)\int_\Sigma|\omega|^2.
\]
If \(r_f(F,M)>\pi/4\), choose \(\frac{\pi}{4}<L<\min\{r_f(F,M),\pi/2\}\). Then $1-\cot^2L>0$, contradicting $\omega\neq0$. Hence
$r_f(F,M)\le\frac{\pi}{4}$.

\bigskip

\section{Proof of rigidity}
\label{sec:equality-rigidity}

The equality proof has two parts.  On the open regular collar we first show
that equality in the Bochner and index-form inequalities forces a two-block
splitting of the shape operator and a doubly warped product metric.  We then
use completeness to extend the normal exponential map to the two limiting
focal hypersurfaces; the Jacobi fields at focal points identify the two collapsed
factors as round spheres.

Throughout this section we assume that the hypotheses of Theorem
\ref{thm:gp-rigidity} hold.  In particular, $M$ is complete, $\Sigma$ is
connected, and
\[
    r_f(F,M)=\frac{\pi}{4},
    \qquad
    1\le p\le \frac{m}{2}.
\]
Let
\[
    \Phi:\Sigma\times\left(-\frac{\pi}{4},\frac{\pi}{4}\right)\to M
\]
be the normal exponential map, set $g=\Phi^*g_M$, and write
\[
    \Sigma_t=\Sigma\times\{t\},
    \qquad
    g_t=g|_{\Sigma_t}.
\]
The vector field $\nu=\partial_t$ is unit and orthogonal to every slice
$\Sigma_t$. Let $A(t)$ be the shape operator of $\Sigma_t$ in
$(\Sigma\times(-\pi/4,\pi/4),g)$ with respect to $\nu$.

Fix $0\neq[\omega]\in H^p(\Sigma;\mathbb R)$. Using the identification
$\Sigma_t\simeq\Sigma$, we regard $[\omega]$ as a cohomology class on
$\Sigma_t$. Let $\omega_t$ be its harmonic representative with respect to
$g_t$. Since $\Sigma_t$ is naturally identified with $\Sigma$, this cohomology class
is nonzero for every $t$.  Once the Bochner equality below shows that
$\omega_t$ is parallel, connectedness of $\Sigma$ implies that $\omega_t$ is
nowhere vanishing.

\subsection{Bochner equality on the parallel hypersurfaces}
\label{subsec:gp-equality-parallel-hypersurfaces}

We first record the Riccati equation for the shape operator along the normal
flow.

\begin{lemma}
\label{lem:gp-shape-evolution}
Let $A(t)$ be the shape operator of $\Sigma_t$, and define the radial curvature operator $B_\nu$ by
\[
    g(B_\nu X,Y)=R(X,\nu,Y,\nu).
\]
Then
\[
    \nabla_\nu A+A^2+B_\nu=0.
\]
\end{lemma}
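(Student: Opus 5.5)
The plan is the standard Riccati derivation, carried out on the regular collar $\Sigma\times(-\pi/4,\pi/4)$ with the pulled-back metric $g$, where $\nu=\partial_t$ is a unit geodesic field. The first step is to show $\nabla_\nu\nu=0$. The curves $t\mapsto(x,t)$ are unit-speed geodesics of $g$, because $\Phi$ maps them to normal geodesics $t\mapsto\exp_{F(x)}(t\nu_x)$ and $g=\Phi^*g_M$. Moreover $\nu$ stays orthogonal to each $\Sigma_t$ by the Gauss lemma for the normal exponential map. Hence $\nu$ is the gradient of the distance function $t$, and $\nabla_\nu\nu=0$.

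Next I will extend $A$ to the tensor field $S(X)=\nabla_X\nu$ on the collar. Since $|\nu|=1$, we have $g(\nabla_X\nu,\nu)=0$. Also $S(\nu)=\nabla_\nu\nu=0$, so $S$ preserves $T\Sigma_t$ and restricts there to $A(t)$, matching the convention $A(X,Y)=g(\nabla_X\nu,Y)$. Because $\nu=\nabla t$, $S$ is the Hessian of $t$ and is therefore symmetric.

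The main computation is as follows. For any vector field $X$,
\[
(\nabla_\nu S)(X)=\nabla_\nu\nabla_X\nu-\nabla_{\nabla_\nu X}\nu .
\]
Using the definition of $R$ and $\nabla_\nu\nu=0$, we get
\[
\nabla_\nu\nabla_X\nu=R(\nu,X)\nu+\nabla_X\nabla_\nu\nu+\nabla_{[\nu,X]}\nu=R(\nu,X)\nu+\nabla_{[\nu,X]}\nu .
\]
Substituting $[\nu,X]=\nabla_\nu X-\nabla_X\nu$ gives
\[
(\nabla_\nu S)X=R(\nu,X)\nu-S^2X .
\]

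It remains to identify $R(\nu,X)\nu$ with $-B_\nu X$, which is the only delicate step. By the sign convention $R_{ijkl}=-g(R(e_i,e_j)e_k,e_l)$,
\[
g(R(\nu,X)\nu,Y)=-R(\nu,X,\nu,Y)=-R(X,\nu,Y,\nu)=-g(B_\nu X,Y),
\]
where the middle equality uses the pair symmetry $R(\nu,X,\nu,Y)=R(X,\nu,Y,\nu)$. As a sanity check, $K(e_i\wedge e_j)=R_{ijij}$, so $B_\nu$ is positive on the round sphere, which is consistent with $A'=-A^2-I$ there.

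Finally I will restrict to $T\Sigma_t$. Since $S\nu=0$ and $\nabla_\nu\nu=0$, we have $(\nabla_\nu S)\nu=0$, and $\nabla_\nu S$ preserves $\nu^\perp$. This yields $\nabla_\nu A+A^2+B_\nu=0$ on each slice. The identity holds pointwise on the whole collar, which is all that is needed later.
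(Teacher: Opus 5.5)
Your proposal is correct and is essentially the paper's argument: both compute $(\nabla_\nu A)X=\nabla_\nu\nabla_X\nu-A^2X$ using $\nabla_\nu\nu=0$, and both identify $R(\nu,X)\nu=-B_\nu X$ from the convention $R_{ijkl}=-g(R(e_i,e_j)e_k,e_l)$ together with pair symmetry. The only difference is cosmetic: the paper extends $X$ so that $[\nu,X]=0$, which makes the bracket term vanish, whereas you keep a general extension and cancel the $\nabla_{\nabla_\nu X}\nu$ terms by hand. You also add some routine justifications the paper leaves implicit: why $\nabla_\nu\nu=0$, why the extended shape operator is symmetric, and why $\nabla_\nu S$ preserves $\nu^\perp$.
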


\begin{proof}
Extend $X\in T\Sigma_t$ by $[\nu,X]=0$. Since $\nabla_\nu\nu=0$, we have
$\nabla_\nu X=\nabla_X\nu=AX$. Hence
\[
    (\nabla_\nu A)X
    =
    \nabla_\nu(AX)-A(\nabla_\nu X)
    =
    \nabla_\nu\nabla_X\nu-A^2X.
\]
By the curvature convention,
\[
    R(\nu,X)\nu=\nabla_\nu\nabla_X\nu,
    \qquad
    g(R(\nu,X)\nu,Y)=-R(X,\nu,Y,\nu)=-g(B_\nu X,Y).
\]
Thus $R(\nu,X)\nu=-B_\nu X$, and the assertion follows.
\end{proof}

The next step is to propagate the equality case of the Bochner argument to each
parallel hypersurface $\Sigma_t$.

\begin{lemma}
\label{lem:gp-leafwise-equality}
Let $\{e_1,\ldots,e_m\}$ be a principal frame on $\Sigma_t$, with
$A(t)e_i=\kappa_i e_i$. Write
\[
    \omega_t=\sum_{|I|=p}\omega_I e^I,
    \qquad
    s_I=\sum_{i\in I}\kappa_i.
\]
Then for every $t\in(-\pi/4,\pi/4)$, the following hold:
\begin{enumerate}
    \item[(i)] The harmonic form $\omega_t\in\mathcal H^p(\Sigma_t)$
    satisfies
    \[
        \nabla^{\Sigma_t}\omega_t=0,
        \qquad
        q_{\Sigma_t}^{(p)}\omega_t=0.
    \]

    \item[(ii)] For every $I\subset\{1,\ldots,m\}$ with $|I|=p$ and
    $\omega_I\neq0$, one has either
    \[
        s_I=-p\cot\left(\frac{\pi}{4}-t\right),
        \qquad
        H-s_I=(m-p)\cot\left(\frac{\pi}{4}+t\right),
    \]
    or
    \[
        s_I=p\cot\left(\frac{\pi}{4}+t\right),
        \qquad
        H-s_I=-(m-p)\cot\left(\frac{\pi}{4}-t\right).
    \]
\end{enumerate}
\end{lemma}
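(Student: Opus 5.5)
The plan is to rerun the argument of Section \ref{sec:general-p-form-estimate} on each parallel hypersurface $\Sigma_t$, now with \emph{asymmetric} focal distances. Fix $t\in(-\pi/4,\pi/4)$. The immersion $\Phi(\cdot,t)$ is a closed two-sided immersed hypersurface. Along $\nu$ it has no focal point for distances less than $\pi/4-t$, and along $-\nu$ none for distances less than $\pi/4+t$. This holds because its normal geodesics are reparametrizations of those of $\Sigma$, and focal points of $\Sigma_t$ correspond to focal points of $\Sigma$.

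Take $L_+<\pi/4-t$ and $L_-<\pi/4+t$. Running the proof of Lemma \ref{lem:gp-index-trace} separately in the two directions gives, for every $r$-plane $P$ with $r\ge p$,
\[
    -r\cot L_+\le \tr_P A(t)\le r\cot L_- .
\]
Letting $L_\pm$ increase to their limits, we set $a=\cot(\pi/4-t)$ and $b=\cot(\pi/4+t)$, and obtain
\[
    -pa\le s_I\le pb,
    \qquad
    -(m-p)a\le H-s_I\le (m-p)b .
\]

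Next I would derive the pointwise lower bound $s_I(H-s_I)\ge -p(m-p)ab$. On the rectangle $[-pa,pb]\times[-(m-p)a,(m-p)b]$, the function $xy$ is bilinear, so its minimum is attained at a vertex. The two vertices with negative product give the value $-p(m-p)ab$. Since $ab=\cot(\pi/4-t)\cot(\pi/4+t)=1$ (the two angles are complementary), this yields $q_A^{(p)}\ge -p(m-p)$. Combining with Lemma \ref{lem:gp-qT} gives $q_{\Sigma_t}^{(p)}\ge0$.

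The integrated Bochner identity for the nonzero $g_t$-harmonic form $\omega_t$ then forces both
\[
    \int|\nabla\omega_t|^2=0,
    \qquad
    \langle q_{\Sigma_t}\omega_t,\omega_t\rangle\equiv0 .
\]
The first gives $\nabla\omega_t=0$. The second, together with the pointwise bounds $q_T\ge p(m-p)$ and $q_A\ge -p(m-p)$, gives equality in both. So $q_{\Sigma_t}^{(p)}\omega_t=0$: the form $\omega_t$ is a minimizing eigenvector of the nonnegative operator $q_{\Sigma_t}^{(p)}$, so it lies in its kernel. This proves (i).

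For (ii), note that $q_A$ is diagonal in the basis $e^I$ by Lemma \ref{lem:gp-qA-formula}. Equality in $q_A\ge-p(m-p)$ on $\omega_t$ then forces $s_I(H-s_I)=-p(m-p)ab$ for every $I$ with $\omega_I\neq0$. Equality also holds in $q_T\ge p(m-p)$, but that is not needed here.

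The remaining task is to show that equality of the bilinear function $xy$ with its minimum on the rectangle forces $(x,y)$ to be a minimizing vertex. I expect this to be the main obstacle. Suppose $xy=-p(m-p)ab<0$ and $x\in[-pa,0)$. Then $y=-p(m-p)ab/x\ge (m-p)b$, and the upper bound on $y$ forces $y=(m-p)b$ and hence $x=-pa$. This is the first alternative. The case $x>0$ gives $x=pb$ and $y=-(m-p)a$ symmetrically, which is the second alternative.

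The care needed is in the strict-versus-nonstrict limits, since the focal-radius bound only holds in the open range. Taking the limit $L_\pm\to\pi/4\mp t$ inside closed inequalities handles this. I also need $0<a,b<\infty$, which holds because $t$ lies in the open interval.
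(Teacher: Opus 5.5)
Your proposal is correct and follows the paper's proof essentially step by step. The asymmetric index-form trace bounds on $\Sigma_t$, the identity $\cot(\pi/4-t)\cot(\pi/4+t)=1$ giving $q_{\Sigma_t}^{(p)}\ge0$, the Bochner identity forcing (i), and equality in the diagonal formula for $q_A^{(p)}$ forcing (ii) are exactly the paper's steps. Your explicit corner argument, your limit $L_\pm\uparrow\pi/4\mp t$, and the check that the bound on $H-s_I$ uses the $(m-p)$-plane spanned by $I^c$ (admissible since $m-p\ge p$) only fill in details the paper leaves implicit.
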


\begin{proof}
For every $J\subset\{1,\ldots,m\}$ with $|J|=r\ge p$, the proof of Lemma
\ref{lem:gp-index-trace} applied to $\Sigma_t$ gives
\[
    -r\cot\left(\frac{\pi}{4}-t\right)
    \le
    \sum_{j\in J}\kappa_j
    \le
    r\cot\left(\frac{\pi}{4}+t\right),
\]
because the remaining focal distances from $\Sigma_t$ are
$\pi/4-t$ and $\pi/4+t$. As in the proof of Proposition
\ref{prop:gp-Bochner}, using
\[
    \cot\left(\frac{\pi}{4}-t\right)
    \cot\left(\frac{\pi}{4}+t\right)=1,
\]
we get $q_{\Sigma_t}^{(p)}\ge0$. Since $\omega_t$ is harmonic,
\[
    0
    =
    \int_{\Sigma_t}|\nabla^{\Sigma_t}\omega_t|^2
    +
    \int_{\Sigma_t}\langle q_{\Sigma_t}^{(p)}\omega_t,\omega_t\rangle.
\]
Therefore $\nabla^{\Sigma_t}\omega_t=0$ and
$q_{\Sigma_t}^{(p)}\omega_t=0$.

For $|I|=p$, the trace bounds give
\[
    -p\cot\left(\frac{\pi}{4}-t\right)
    \le
    s_I
    \le
    p\cot\left(\frac{\pi}{4}+t\right),
\]
and, since $m-p\ge p$,
\[
    -(m-p)\cot\left(\frac{\pi}{4}-t\right)
    \le
    H-s_I
    \le
    (m-p)\cot\left(\frac{\pi}{4}+t\right).
\]
Thus $s_I(H-s_I)\ge -p(m-p)$. Since equality holds in the pointwise Bochner
nonnegativity, every $I$ with $\omega_I\neq0$ satisfies
$s_I(H-s_I)=-p(m-p)$. The equality cases are exactly the two opposite
corners displayed above.
\end{proof}

We now use the equality in the index estimate to upgrade trace
equalities into operator equalities.

\begin{lemma}
\label{lem:gp-focalpoint-index-equality}
Let $P\subset T_x\Sigma_t$ be an $r$-plane with $r\ge p$.
\begin{enumerate}
    \item[(i)] If $\tr_P A=-r\cot(\frac{\pi}{4}-t)$, then
    \[
        A|_P=-\cot\left(\frac{\pi}{4}-t\right)\Id,
        \qquad
        \tr_PB_\nu=r,
        \qquad
        B_\nu P\subset P.
    \]

    \item[(ii)] If $\tr_P A=r\cot(\frac{\pi}{4}+t)$, then
    \[
        A|_P=\cot\left(\frac{\pi}{4}+t\right)\Id,
        \qquad
        \tr_PB_\nu=r,
        \qquad
        B_\nu P\subset P.
    \]
\end{enumerate}
\end{lemma}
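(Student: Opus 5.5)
Our approach is to rerun the proof of Lemma \ref{lem:gp-index-trace} on $\Sigma_t$ and track where the inequalities become equalities. Consider case (i); case (ii) is symmetric with $\eta=\nu$ and $L=\pi/4+t$. Set $\eta=-\nu$ and $L=\pi/4-t$. By Lemma \ref{lem:gp-index-trace} applied to $\Sigma_t$ in direction $-\nu$, there is no focal point along $\gamma(s)=\exp_x(-s\nu)$ on $[0,L)$. With $E_1,\dots,E_r$ an orthonormal basis of $P$, parallel along $\gamma$, and $f(s)=\sin(L-s)/\sin L$, the computation in that proof gives
\[
0\le \sum_{a=1}^r I_{-\nu}(fE_a,fE_a)\le r\cot L+\tr_P A_{-\nu}=r\cot L-\tr_PA .
\]
The hypothesis $\tr_PA=-r\cot L$ has the wrong sign for this to be an equality, so the orientation must be fixed first. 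The relevant focal distance bounds $\tr_P A_{-\nu}\ge -r\cot L$, that is, $\tr_P A\le r\cot L$. Matching the hypothesis sign, case (i) is the side where $\tr_P A_{\eta}=-r\cot L$ with $A_\eta$ the shape operator for the direction $\eta$ at focal distance $L=\pi/4-t$. I will therefore choose $\eta$ so that $\tr_PA_\eta$ attains its lower bound, and both the outer inequality and the curvature inequality become equalities.

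Equality in the curvature step, with $f^2>0$ on $[0,L)$ and $\Ric_r\ge r$, gives $\sum_a R(E_a,\gamma',E_a,\gamma')\equiv r$ along $\gamma$. At $s=0$ this is $\tr_PB_\nu=r$, since $B_{-\nu}=B_\nu$. Equality $\sum_a I_\eta(fE_a,fE_a)=0$ with each term nonnegative gives $I_\eta(fE_a,fE_a)=0$ for each $a$. There is one technical issue: Lemma \ref{lem:gp-index-nonnegative} requires no focal points on the closed interval $[0,L]$. I would handle this by working on $[0,L']$ with $L'<L$ and $f_{L'}$, and then letting $L'\to L$. A cleaner route is to use the formula $I=\int|YW'|^2$ from that proof, together with its limiting version: the integral tends to $0$, so $W'\equiv0$ on $[0,L)$. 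Either way, each $fE_a$ is a $\Sigma_t$-Jacobi field.

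Now read off the two conditions. The initial condition $D_s(fE_a)(0)=A_\eta E_a$ gives $f'(0)E_a=-\cot L\,E_a=A_\eta E_a$. Hence $A_\eta|_P=-\cot L\,\Id$, which translates to the stated $A|_P$ after the sign bookkeeping above. The Jacobi equation $(f''+fR_\gamma)E_a=0$ with $f''=-f$ gives $R_\gamma E_a=E_a$ along $\gamma$. At $s=0$ this says $B_\nu E_a=E_a$, so $B_\nu$ is the identity on $P$; in particular $B_\nu P\subset P$ and $\tr_PB_\nu=r$.

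The main obstacle is the endpoint: the focal point sits exactly at $s=L$, so the equality characterization of Lemma \ref{lem:gp-index-nonnegative} does not apply verbatim. I expect to resolve this with the approximation $L'\uparrow L$. I will show that the defect $\int_0^{L'}|YW'|^2$ tends to $\sum_a I_\eta(fE_a,fE_a)$ restricted to $[0,L']$, which tends to $0$. Pointwise vanishing of $W'$ on compact subsets of $[0,L)$ then gives the Jacobi property there, and that suffices because all conclusions are read at $s=0$.
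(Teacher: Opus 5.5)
Your argument is correct in substance and shares the paper's core. You use the same test fields $V_a=fE_a$ with $f(s)=\sin(L-s)/\sin L$, the same identity $\sum_a I(V_a,V_a)=r\cot L-\int_0^L f^2(\tr_{P(s)}B_\nu-r)\,ds+\tr_PA_\eta$, and the same use of $\Ric_r\ge r$ to force each $V_a$ to be a $\Sigma_t$-Jacobi field, whose initial condition gives $A|_P$.

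\textbf{Where you differ.} You differ in the curvature conclusion. The paper takes $\tr_PB_\nu=r$ from the vanishing curvature integral. It then gets $B_\nu P\subset P$ by a first-variation argument, since $P$ minimizes $\Pi\mapsto\tr_\Pi B_\nu$ over $r$-planes. You instead substitute $fE_a$ into the Jacobi equation and use $f''=-f$ to get $B_\nu E_a=E_a$ directly. This is shorter and strictly stronger ($B_\nu|_P=\Id_P$). Applied to the two blocks $E_t,F_t$, it would make Lemma \ref{lem:gp-radial-curvature-identity} immediate.

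Your limiting argument also supplies a step the paper's proof skips. The paper invokes the equality case of Lemma \ref{lem:gp-index-nonnegative}, whose hypothesis excludes a focal point at $s=L$. Such a point cannot be ruled out here, and it does occur in the equality case. Of your two variants, use the one with $f_{L'}(s)=\sin(L'-s)/\sin L'$:
\begin{itemize}
\item then $W_a^{L'}(L')=0$ kills the boundary term;
\item $\sum_a\int_0^{L'}|Y(W_a^{L'})'|^2\,ds\le r(\cot L'-\cot L)\to0$;
\item $C^1$ convergence on compact subintervals of $[0,L)$ gives $W_a'\equiv0$ there.
\end{itemize}
The fixed-$f$ variant would also need $\langle Y'W,YW\rangle(L')\to0$. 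That requires a bound such as $|Y(s)^{-1}|=O((L-s)^{-1})$, which you did not provide.

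\textbf{Orientation error.} The orientation bookkeeping is wrong as written and must be fixed, not deferred to ``sign bookkeeping''. Moving along $+\nu$, the leaf $\Sigma_t$ reaches the singular time $\pi/4$ after distance $\pi/4-t$; along $-\nu$ the distance is $\pi/4+t$.
\begin{itemize}
\item Case (i) uses $\eta=\nu$ and $L=\pi/4-t$. Then $A_\eta=A$, so $A_\eta|_P=-\cot L\,\Id$ is exactly the claim, with no translation needed.
\item Case (ii) uses $\eta=-\nu$ and $L=\pi/4+t$. Then $A_{-\nu}=-A$ gives $A|_P=\cot(\pi/4+t)\Id$.
\end{itemize}
Your opening choice, $\eta=-\nu$ with $L=\pi/4-t$, is the wrong pairing for two reasons. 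For $t<0$ a focal point may occur at $\pi/4+t<L$. And, as you noticed, the hypothesis gives $\tr_PA_{-\nu}=+r\cot L$, so no equality is produced. Finally, the absence of focal points on $[0,L)$ comes from nonsingularity of the normal flow on $(-\pi/4,\pi/4)$, not from Lemma \ref{lem:gp-index-trace}.
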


\begin{proof}
We prove (i). The proof of (ii) follows by applying the same argument to the
normal $-\nu$.

Set $L=\pi/4-t$. Let $P(s)$ denote the parallel translate of $P$ along the geodesic. Let $u_1(s),\ldots,u_r(s)$ be an orthonormal basis of $P(s)$,
parallel transported along the forward normal geodesic, and set
\[
    f(s)=\frac{\sin(L-s)}{\sin L},
    \qquad
    V_a(s)=f(s)u_a(s),
    \qquad
    0\le s\le L.
\]
Then $f(0)=1$, $f(L)=0$, $f''+f=0$, and $f'(0)=-\cot L$.  For each $a$,
\[
    I_\nu(V_a,V_a)
    =
    \int_0^L
    \left(
        (f')^2
        -
        f^2R(u_a,\nu,u_a,\nu)
    \right)\,ds
    +
    A(u_a,u_a).
\]
Summing over $a$, we get
\[
\begin{aligned}
\sum_{a=1}^r I_\nu(V_a,V_a)
&=
r\int_0^L(f')^2\,ds
-
\int_0^L f^2\tr_{P(s)}B_\nu\,ds
+
\tr_P A\\
&=
r\int_0^L((f')^2-f^2)\,ds
-
\int_0^L f^2(\tr_{P(s)}B_\nu-r)\,ds
+\tr_P A.
\end{aligned}
\]
Since $f''=-f$,
\[
    \int_0^L((f')^2-f^2)\,ds
    =
    [ff']_0^L
    =
    \cot L.
\]
Using $\tr_PA=-r\cot L$, we obtain
\[
    \sum_{a=1}^r I_\nu(V_a,V_a)
    =
    -
    \int_0^L f^2(\tr_{P(s)}B_\nu-r)\,ds.
\]
The lower bound $\Ric_r\ge r$ gives $\tr_{P(s)}B_\nu-r\ge0$ for
$0\le s<L$.  On the other hand, Lemma \ref{lem:gp-index-nonnegative} gives
$I_\nu(V_a,V_a)\ge0$ for each $a$.  Hence both sides must vanish:
\[
    I_\nu(V_a,V_a)=0
    \quad\text{for every }a,
    \qquad
    \tr_{P(s)}B_\nu=r
    \quad\text{for }0\le s<L.
\]
By the equality statement in Lemma \ref{lem:gp-index-nonnegative}, each
$V_a$ is a $\Sigma_t$-Jacobi field.  Therefore
\[
    A u_a=D_sV_a(0)=f'(0)u_a=-\cot L\,u_a.
\]
This proves $A|_P=-\cot L\,\Id$.

The identity $\tr_{P(s)}B_\nu=r$ at $s=0$ gives $\tr_PB_\nu=r$.
Since every $r$-plane $\Pi\subset T\Sigma_t$ satisfies $\tr_\Pi B_\nu\ge r$,
the plane $P$ is a minimizer of the function \(\Pi\longmapsto \tr_\Pi B_\nu\) on the Grassmannian.  Taking the first variation of this function by replacing
one unit vector $u\in P$ with $u+\varepsilon U$, $U\in P^\perp$, gives \(g(B_\nu u,U)=0\). Hence $B_\nu P\subset P$.
\end{proof}

\subsection{Splitting in the equality case}
\label{subsec:gp-equality-splitting}

\begin{lemma}
\label{lem:gp-pointwise-equality-blocks}
At every point of $\Sigma_t$, there is an orthogonal splitting
\[
    T\Sigma_t=E_t\oplus F_t
\]
such that
\[
    A|_{E_t}
    =
    -\cot\left(\frac{\pi}{4}-t\right)\Id,
    \qquad
    A|_{F_t}
    =
    \cot\left(\frac{\pi}{4}+t\right)\Id.
\]
Moreover,
\[
    \tr_{E_t}B_\nu=\dim E_t,
    \qquad
    \tr_{F_t}B_\nu=\dim F_t,
    \qquad
    \{\dim E_t,\dim F_t\}=\{p,m-p\}.
\]
\end{lemma}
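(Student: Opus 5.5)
Fix $t$ and a point $x\in\Sigma_t$. By Lemma \ref{lem:gp-leafwise-equality}(i), $\omega_t$ is parallel and represents a nonzero class, so since $\Sigma$ is connected $\omega_t(x)\neq0$. Choose a principal frame at $x$ and a multi-index $I$ with $|I|=p$ and $\omega_I\neq0$. Lemma \ref{lem:gp-leafwise-equality}(ii) places $(s_I,H-s_I)$ in one of the two corners.

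Suppose first that $s_I=-p\cot(\frac{\pi}{4}-t)$ and $H-s_I=(m-p)\cot(\frac{\pi}{4}+t)$. Put $P=\mathrm{span}\{e_i:i\in I\}$ and $Q=\mathrm{span}\{e_j:j\notin I\}$. Then $\tr_PA=-p\cot(\frac{\pi}{4}-t)$ with $\dim P=p$, and $\tr_QA=(m-p)\cot(\frac{\pi}{4}+t)$ with $\dim Q=m-p\ge p$. Since $\dim Q\ge p$, Lemma \ref{lem:gp-focalpoint-index-equality} applies to both planes:
\begin{itemize}
\item part (i) gives $A|_P=-\cot(\frac{\pi}{4}-t)\Id$ and $\tr_PB_\nu=p$;
\item part (ii) gives $A|_Q=\cot(\frac{\pi}{4}+t)\Id$ and $\tr_QB_\nu=m-p$.
\end{itemize}
Set $E_t=P$ and $F_t=Q$. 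These are orthogonal because they are spanned by disjoint subsets of an orthonormal principal frame, and $T_x\Sigma_t=E_t\oplus F_t$. The other corner is symmetric: take $E_t=Q$ and $F_t=P$, applying part (i) to $Q$ and part (ii) to $P$. In both cases $\{\dim E_t,\dim F_t\}=\{p,m-p\}$.

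It remains to check that the splitting is canonical, i.e. independent of the choice of $I$ and of principal frame. For $|t|<\pi/4$ both $\cot(\frac{\pi}{4}\mp t)$ are positive, so $-\cot(\frac{\pi}{4}-t)<0<\cot(\frac{\pi}{4}+t)$ and the two values are distinct. Since $E_t\oplus F_t$ exhausts $T_x\Sigma_t$, $A$ has exactly these two eigenvalues. Hence $E_t$ and $F_t$ are the eigenspaces of $A$ for these eigenvalues, and the dimension statement holds independently of choices.

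The main obstacle is the use of Lemma \ref{lem:gp-focalpoint-index-equality} on the complementary plane $Q$. This needs $\dim Q\ge p$, which comes from $p\le m/2$, and it needs $\Ric_{m-p}\ge m-p$, which Lemma \ref{lem:gp-index-trace} derives from $\Ric_p\ge p$. The point is that the trace equality $H-s_I=\pm(m-p)\cot(\cdot)$ from the Bochner equality is exactly the extremal case of that lemma for $Q$. Without this, trace equality alone would not force $A|_Q$ to be a multiple of the identity.

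When $p=m-p$, the dimensions do not identify which block is which, but the eigenvalue labels still do, so no ambiguity arises.
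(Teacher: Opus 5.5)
Your proof is correct and follows the paper's argument. You pick $I$ with $\omega_I\neq0$, take $E_t$ to be the span of $\{e_i\}_{i\in I}$ or of its complement according to which corner of Lemma \ref{lem:gp-leafwise-equality}(ii) occurs, and apply Lemma \ref{lem:gp-focalpoint-index-equality}(i) and (ii) to the two complementary blocks, using $m-p\ge p$.

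Your extra observation that $-\cot(\frac{\pi}{4}-t)<0<\cot(\frac{\pi}{4}+t)$ is not part of the paper's proof of this lemma. It makes $E_t$ and $F_t$ the eigenspaces of $A$, independent of the choices, which is what the paper implicitly relies on when it later defines $P_t$ as an eigenprojection.
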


\begin{proof}
At every point of $\Sigma_t$, choose a principal frame
$\{e_1,\ldots,e_m\}$ for $A$. Since $\omega_t\neq0$, there is
$I\subset\{1,\ldots,m\}$, $|I|=p$, such that $\omega_I\neq0$.

If the first alternative in Lemma \ref{lem:gp-leafwise-equality}(ii) holds, set
\[
    E_t=\operatorname{span}\{e_i:i\in I\},
    \qquad
    F_t=E_t^\perp .
\]
Then
\[
    \tr_{E_t}A
    =
    -p\cot\left(\frac{\pi}{4}-t\right),
    \qquad
    \tr_{F_t}A
    =
    (m-p)\cot\left(\frac{\pi}{4}+t\right).
\]
Applying Lemma \ref{lem:gp-focalpoint-index-equality}(i) to $E_t$ and Lemma
\ref{lem:gp-focalpoint-index-equality}(ii) to $F_t$, we get
\[
    A|_{E_t}
    =
    -\cot\left(\frac{\pi}{4}-t\right)\Id,
    \qquad
    A|_{F_t}
    =
    \cot\left(\frac{\pi}{4}+t\right)\Id,
\]
and
\[
    \tr_{E_t}B_\nu=\dim E_t,
    \qquad
    \tr_{F_t}B_\nu=\dim F_t.
\]

If the second alternative in Lemma \ref{lem:gp-leafwise-equality}(ii) holds, set
\[
    E_t=\operatorname{span}\{e_j:j\in I^c\},
    \qquad
    F_t=E_t^\perp .
\]
Then
\[
    \tr_{E_t}A
    =
    -(m-p)\cot\left(\frac{\pi}{4}-t\right),
    \qquad
    \tr_{F_t}A
    =
    p\cot\left(\frac{\pi}{4}+t\right).
\]
Again Lemma \ref{lem:gp-focalpoint-index-equality}(i) applied to $E_t$ and
Lemma \ref{lem:gp-focalpoint-index-equality}(ii) applied to $F_t$ give the
same conclusions. In either case,
\[
    \{\dim E_t,\dim F_t\}=\{p,m-p\}.
\]
\end{proof}

We next fix the normal orientation so that the lower block has dimension $p$.

\begin{lemma}
\label{lem:gp-orientation-choice}
After choosing the normal orientation on each connected component of the
regular normal-flow region, the splitting in Lemma
\ref{lem:gp-pointwise-equality-blocks} can be arranged so that
\[
    \dim E_t=p,
    \qquad
    \dim F_t=m-p.
\]
\end{lemma}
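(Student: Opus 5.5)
The plan is to show that $\dim E_t$ is a locally constant function on the regular normal-flow region $\Sigma\times(-\pi/4,\pi/4)$. Then, on any component where it equals $m-p$, reversing the normal interchanges the roles of $E_t$ and $F_t$.

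First I would note that the splitting of Lemma \ref{lem:gp-pointwise-equality-blocks} is intrinsic, even though it was built from a choice of principal frame and of a multi-index $I$. For $|t|<\pi/4$ we have
\[
    -\cot\left(\tfrac{\pi}{4}-t\right)<0<\cot\left(\tfrac{\pi}{4}+t\right).
\]
So $A(t)$ has exactly these two eigenvalues, and they are distinct. Hence $E_t$ is the full eigenspace of $A(t)$ for $-\cot(\pi/4-t)$, and $F_t$ is the full eigenspace for $\cot(\pi/4+t)$; neither depends on any choice.

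If $m=2p$, then $\{\dim E_t,\dim F_t\}=\{p,m-p\}$ already gives $\dim E_t=p$, and nothing needs to be done. Assume now that $p<m-p$.

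Next I would prove local constancy. The endomorphism field $A(t)$ of the pullback metric $g$ depends continuously on $(x,t)$. The functions
\[
    \lambda_-(t)=-\cot\left(\tfrac{\pi}{4}-t\right),\qquad \lambda_+(t)=\cot\left(\tfrac{\pi}{4}+t\right)
\]
are continuous and stay separated. The multiplicity of $\lambda_-$ equals the rank of the spectral projection
\[
    \frac{1}{2\pi i}\oint_\Gamma \bigl(z-A(t)\bigr)^{-1}\,dz,
\]
where $\Gamma$ is a small circle around $\lambda_-$ that stays uniformly away from $\lambda_+$ near a given point. This projection varies continuously, so its rank is locally constant. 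Consequently $\dim E_t$ takes a single value, either $p$ or $m-p$, on each connected component of the regular region.

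Finally I would flip the orientation on the components where $\dim E_t=m-p$. Replacing $\nu$ by $-\nu$ reparametrizes the slices by $t'=-t$ and changes $A$ to $-A$. At parameter $t'$ the old eigenvalue $-\cot(\pi/4-t)$ becomes
\[
    \cot\left(\tfrac{\pi}{4}-t\right)=\cot\left(\tfrac{\pi}{4}+t'\right),
\]
and the old $\cot(\pi/4+t)$ becomes $-\cot(\pi/4-t')$. So the old $F_t$ becomes the new $E_{t'}$, which has dimension $p$. The hypotheses are symmetric in $\nu\leftrightarrow-\nu$, since the focal radius is two-sided, so all previous lemmas still apply with the new orientation.

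The only real care needed is in the continuity step. One must check that $A(t)$ is continuous jointly in $(x,t)$, and that the two eigenvalues remain uniformly separated on compact subsets of the open collar, so that the spectral-projection argument applies. Both hold because $|t|<\pi/4$ strictly.
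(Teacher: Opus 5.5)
Your proposal is correct and follows the same route as the paper: treat $m=2p$ as trivial, show that $\dim E_t$ is locally constant by continuity, and reverse $\nu$ wherever $\dim E_t=m-p$. The only difference is the continuity step. The paper uses continuity of $H=\tr A(t)$, whose two possible values differ by $(m-2p)\bigl[\cot(\frac{\pi}{4}-t)+\cot(\frac{\pi}{4}+t)\bigr]>0$. That is more elementary than your Riesz-projection rank argument; equivalently, $\dim E_t=\tr\bigl((A-\lambda_+\Id)/(\lambda_--\lambda_+)\bigr)$ is continuous and integer-valued.
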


\begin{proof}
If $m=2p$, there is nothing to prove. Assume $p<m-p$.

At a point, choose a principal frame and choose $I$ with $\omega_I\neq0$.
If the first alternative in Lemma \ref{lem:gp-leafwise-equality}(ii) holds,
then Lemma \ref{lem:gp-pointwise-equality-blocks} gives
\[
    E_t=\operatorname{span}\{e_i:i\in I\},
    \qquad
    \dim E_t=p,
\]
and
\[
    H
    =
    -p\cot\left(\frac{\pi}{4}-t\right)
    +(m-p)\cot\left(\frac{\pi}{4}+t\right).
\]
If the second alternative in Lemma \ref{lem:gp-leafwise-equality}(ii) holds,
then
\[
    E_t=\operatorname{span}\{e_j:j\in I^c\},
    \qquad
    \dim E_t=m-p,
\]
and
\[
    H
    =
    p\cot\left(\frac{\pi}{4}+t\right)
    -(m-p)\cot\left(\frac{\pi}{4}-t\right).
\]
The difference between these two possible values of $H$ is
\[
    (m-2p)
    \left[
    \cot\left(\frac{\pi}{4}-t\right)
    +
    \cot\left(\frac{\pi}{4}+t\right)
    \right]
    >0.
\]
Since $H$ is continuous, the two possibilities cannot interchange on a
connected component of the regular normal-flow region. Thus, on such a
component, either $\dim E_t=p$ everywhere or $\dim E_t=m-p$ everywhere.
Reversing the normal orientation exchanges the two possibilities. Hence we may
choose the normal orientation so that $\dim E_t=p$.
\end{proof}

The next consequence is that the radial curvature operator is exactly the
identity.

\begin{lemma}
\label{lem:gp-radial-curvature-identity}
For every $t\in(-\pi/4,\pi/4)$,
\[
    B_\nu|_{T\Sigma_t}=\Id_{T\Sigma_t}.
\]
\end{lemma}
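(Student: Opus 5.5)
We want $B_\nu=\Id$ on $T\Sigma_t$. We already know from Lemma \ref{lem:gp-pointwise-equality-blocks} that $B_\nu$ has trace $\dim E_t$ on $E_t$ and trace $\dim F_t$ on $F_t$. From the proof of Lemma \ref{lem:gp-focalpoint-index-equality} we know that $B_\nu E_t\subset E_t$ and $B_\nu F_t\subset F_t$, because both are planes of dimension at least $p$ realizing the equality case. The plan is to combine these invariance and trace identities with the Riccati equation of Lemma \ref{lem:gp-shape-evolution} and the explicit form of $A(t)$ on each block.

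The main route goes through the Riccati equation. By Lemma \ref{lem:gp-orientation-choice}, on each component we have $\dim E_t=p$ and $\dim F_t=m-p$. The eigenvalue $-\cot(\pi/4-t)$ of $A$ on $E_t$ differs from the eigenvalue $\cot(\pi/4+t)$ on $F_t$: the first is negative and the second positive for $|t|<\pi/4$. Hence $E_t$ and $F_t$ are the eigenbundles of $A(t)$. They are therefore smooth distributions on the collar $\Sigma\times(-\pi/4,\pi/4)$, determined by the smooth tensor $A$, and
\[
    A=a(t)\,\pi_E+b(t)\,\pi_F,
    \qquad
    a(t)=-\cot\left(\frac{\pi}{4}-t\right),\quad
    b(t)=\cot\left(\frac{\pi}{4}+t\right).
\]
Differentiate along $\nu$:
\[
    \nabla_\nu A=a'\pi_E+b'\pi_F+(a-b)\nabla_\nu\pi_E .
\]
Since $\pi_E^2=\pi_E$, the term $\nabla_\nu\pi_E$ is off-diagonal with respect to $E\oplus F$. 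Now $B_\nu=-\nabla_\nu A-A^2$, and $B_\nu$ preserves $E$ and $F$. Comparing off-diagonal blocks then gives $(a-b)\nabla_\nu\pi_E=0$, so $\nabla_\nu\pi_E=0$. The diagonal blocks give
\[
    B_\nu|_E=-(a'+a^2)\Id,\qquad B_\nu|_F=-(b'+b^2)\Id .
\]
A direct computation gives $a'=-\csc^2(\pi/4-t)=-(1+a^2)$, so $-(a'+a^2)=1$. Likewise $b'=-(1+b^2)$, so $-(b'+b^2)=1$. Hence $B_\nu=\Id$ on $T\Sigma_t$.

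As a consistency check, the trace identities from Lemma \ref{lem:gp-pointwise-equality-blocks} also follow directly from this formula. The only potential obstacle is justifying that $E_t,F_t$ are smooth and that the block decomposition commutes with differentiation. This is settled by the eigenvalue separation: $a<0<b$ gives the uniform gap $b-a=\cot(\pi/4+t)+\cot(\pi/4-t)>0$. As a result, $\pi_E=(A-b\Id)/(a-b)$ is an explicit smooth expression in $A$ and $t$.

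This formula also gives the off-diagonal computation directly. Substitute $\nabla_\nu A=-A^2-B_\nu$ into
\[
    \nabla_\nu\pi_E=\frac{\nabla_\nu A-b'\Id}{a-b}-\frac{(a'-b')(A-b\Id)}{(a-b)^2}.
\]
Every term on the right preserves $E$ and $F$. So $\nabla_\nu\pi_E$ is block-diagonal; being also off-diagonal, it vanishes. This is the argument above, stated without any extension issues.
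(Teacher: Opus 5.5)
Your proof is correct, but it takes a different route from the paper's.

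\textbf{The paper's route.} The paper's proof is pointwise linear algebra at a single $t$. It uses only the two trace identities of Lemma \ref{lem:gp-pointwise-equality-blocks}, which give $\tr_{T\Sigma_t}B_\nu=m$. It then applies $\Ric_p\ge p$ to the span of the $p$ lowest eigenvectors of $B_\nu$, obtaining $\mu_1+\cdots+\mu_p\ge p$ and hence $\mu_p\ge1$. So $\mu_j\ge1$ for all $j\ge p$, and the trace forces every eigenvalue to equal $1$. It never uses the invariance $B_\nu E_t\subset E_t$, the Riccati equation, or information from neighbouring slices.

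\textbf{Your route.} You use the other output of Lemma \ref{lem:gp-focalpoint-index-equality}, namely block-invariance of $B_\nu$. You feed the explicit eigenvalue functions $a(t),b(t)$ into the Riccati equation of Lemma \ref{lem:gp-shape-evolution}. The key fact is that $a$ and $b$ solve the model equation $f'=-(1+f^2)$, so the diagonal blocks force $B_\nu=\Id$. This requires the splitting on an open $t$-interval and smoothness of the eigenprojection. Your spectral-gap formula $\pi_E=(A-b\Id)/(a-b)$ settles both cleanly, and it also makes the block-diagonal versus off-diagonal comparison for $\nabla_\nu\pi_E$ immediate.

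\textbf{Comparison.} Your route yields $\nabla_\nu\pi_E=0$ at the same time. The paper proves that separately in Lemma \ref{lem:gp-normal-persistence-warped}, using $B_\nu=\Id$ as input. Your argument also explains the identity conceptually: the shape operator already has the constant-curvature Riccati profile, so the radial curvature must be the model one. The paper's route is more economical: it needs only the traces and $\Ric_p\ge p$ at one point of one slice. Finally, your appeal to Lemma \ref{lem:gp-orientation-choice} is unnecessary, since your argument works whichever block has dimension $p$. You also never use the trace identities; they come out as a consequence.
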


\begin{proof}
Fix a point of $\Sigma_t$. Since
$g(B_\nu X,\nu)=R^M(X,\nu,\nu,\nu)=0$ for every $X\in T\Sigma_t$, the
operator $B_\nu$ preserves $T\Sigma_t$. By Lemma
\ref{lem:gp-pointwise-equality-blocks}, we have
$\tr_{E_t}B_\nu=\dim E_t$ and $\tr_{F_t}B_\nu=\dim F_t$. Hence
$\tr_{T\Sigma_t}B_\nu=m$.

Let $\mu_1\le\cdots\le\mu_m$ be the eigenvalues of
$B_\nu|_{T\Sigma_t}$, with respect to an orthonormal eigenbasis
$v_1,\ldots,v_m$. Since $\Ric_p\ge p$, every $p$-plane
$\Pi\subset T\Sigma_t$ satisfies $\tr_\Pi B_\nu\ge p$. Applying this to
$\Pi=\operatorname{span}\{v_1,\ldots,v_p\}$ gives
\[
    \mu_1+\cdots+\mu_p\ge p.
\]
Thus $\mu_p\ge1$, for otherwise $\mu_1,\ldots,\mu_p<1$ and the last
inequality would fail. Since the eigenvalues are ordered, we get
$\mu_j\ge1$ for every $j\ge p$. Therefore
\[
    m
    =
    \tr_{T\Sigma_t}B_\nu
    =
    \sum_{j=1}^{m}\mu_j
    \ge
    p+(m-p)
    =
    m.
\]
Thus equality holds throughout. Hence $\mu_1+\cdots+\mu_p=p$ and
$\mu_{p+1}=\cdots=\mu_m=1$. Since $\mu_p\ge1$ and
$\mu_p\le\mu_{p+1}=1$, we have $\mu_p=1$. Therefore
$\mu_1,\ldots,\mu_p\le1$, while their sum is $p$, so
$\mu_1=\cdots=\mu_p=1$. Hence all eigenvalues of $B_\nu|_{T\Sigma_t}$ are
equal to $1$, and the claim follows.
\end{proof}

We also need the mixed normal curvature term in Codazzi to vanish.

\begin{lemma}
\label{lem:gp-normal-index-curvature-vanishing}
For every $t\in(-\pi/4,\pi/4)$,
\[
    R^M(X,Y,\nu,Z)=0
\]
for all $X,Y,Z\in T\Sigma_t$.
\end{lemma}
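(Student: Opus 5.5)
The plan is to exploit that, by Lemma \ref{lem:gp-radial-curvature-identity}, the pair $(X,\{E_a\})=(\nu,\text{any orthonormal $p$-frame in }T\Sigma_t)$ \emph{attains} the lower bound in Assumption \ref{ass:gp-Ric-p}. The first variation of $\Ric_p$ in the direction of $X$ should then kill the mixed curvature terms. This avoids Codazzi and the splitting altogether; we only need $B_\nu=\Id$ and $\Ric_p\ge p$.

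\emph{Step 1 (first variation in $X$).} Fix a point of $\Sigma_t$, an orthonormal $p$-frame $E_1,\dots,E_p\subset T\Sigma_t$, and $Z\in T\Sigma_t$. Set
\[
X_\varepsilon=\frac{\nu+\varepsilon Z}{|\nu+\varepsilon Z|},
\]
and let $E_a(\varepsilon)$ be the Gram--Schmidt orthonormalization of $E_a-\varepsilon g(E_a,Z)\nu$. Then $E_a(\varepsilon)\perp X_\varepsilon$ to first order, and this can be corrected to exact orthogonality at order $\varepsilon^2$. By Assumption \ref{ass:gp-Ric-p},
\[
f(\varepsilon)=\sum_a R(E_a(\varepsilon),X_\varepsilon,E_a(\varepsilon),X_\varepsilon)\ge p,
\]
while Lemma \ref{lem:gp-radial-curvature-identity} gives $f(0)=\tr_{\mathrm{span}\{E_a\}}B_\nu=p$. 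Hence $f'(0)=0$.

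Differentiating $f$ produces three kinds of terms. The normalization of $X_\varepsilon$ contributes at second order only. The $\nu$-correction of $E_a$ contributes terms of the form $R(\nu,\nu,E_a,\nu)=0$. The derivative of $X_\varepsilon$ contributes $2\sum_a R(E_a,\nu,E_a,Z)$. Therefore
\[
\sum_{a=1}^pR(E_a,\nu,E_a,Z)=0
\]
for every orthonormal $p$-frame in $T\Sigma_t$ and every $Z\in T\Sigma_t$.

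\emph{Step 2 (from $p$-traces to single vectors).} Fix $Z$ and consider the quadratic form $Q(U)=R(U,\nu,U,Z)$ on $T\Sigma_t$. Fix an orthonormal $E_2,\dots,E_p$, and compare two frames $\{U,E_2,\dots\}$ and $\{U',E_2,\dots\}$ with $U,U'$ unit vectors in the complement $C$. Step 1 gives $Q(U)=Q(U')$. Since $p\le m/2$, we have $\dim C=m-p+1\ge2$, so $Q$ is constant on the unit sphere of $C$.

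Every unit vector of $T\Sigma_t$ lies in some such complement, and any two such complements intersect nontrivially. Hence $Q(U)=c|U|^2$ on all of $T\Sigma_t$. Plugging into Step 1 gives $pc=0$, so $Q\equiv0$. Polarizing yields
\[
R(U,\nu,V,Z)+R(V,\nu,U,Z)=0
\]
for all $U,V,Z\in T\Sigma_t$.

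\emph{Step 3 (algebra).} Define $T(X,Y,Z)=R(X,\nu,Y,Z)$ for $X,Y,Z\in T\Sigma_t$. Step 2 says $T$ is antisymmetric in $(X,Y)$. Pair symmetry gives $T(X,Y,Z)=R(Y,Z,X,\nu)$, which is antisymmetric in $(Y,Z)$. A trilinear form antisymmetric in both adjacent pairs is totally antisymmetric.

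Using the first Bianchi identity,
\[
R(X,Y,Z,\nu)+R(Y,Z,X,\nu)+R(Z,X,Y,\nu)=0,
\]
and rewriting each term as $\pm T$ of a permutation, the left side equals $3$ times a single value of $T$ up to sign. Hence $T\equiv0$, that is, $R^M(X,Y,\nu,Z)=0$.

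\emph{Main obstacle.} The delicate step is Step 1: verifying that the constrained first variation really produces exactly $2\sum_aR(E_a,\nu,E_a,Z)$, with the corrections to the frame contributing nothing at first order. If that bookkeeping fails, the fallback is Codazzi. Using the constant distinct principal curvatures from Lemma \ref{lem:gp-pointwise-equality-blocks} and the fact that $E_t$ is $\nabla_\nu$-parallel (from the Riccati equation in Lemma \ref{lem:gp-shape-evolution} with $B_\nu=\Id$), one would identify $R(X,Y,\nu,Z)$ with $(\lambda-\mu)$ times second-fundamental-form terms of the distributions, and then show those terms vanish.
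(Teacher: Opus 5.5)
Your proposal is correct and follows the paper's proof essentially step for step. Both use the constrained first variation of $\Ric_p$ at the minimizer supplied by $B_\nu=\Id$. Both then pass from vanishing $p$-traces to vanishing of the symmetrized form; you swap one frame vector inside complements, where the paper compares sums of eigenvalues differing in one index. Both finish with the alternating-tensor and first Bianchi argument.

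Two small remarks. The vectors $E_a-\varepsilon g(E_a,Z)\nu$ are already exactly orthogonal to $\nu+\varepsilon Z$, so no second-order correction is needed. The Codazzi fallback is unnecessary; it would also be circular, since the paper uses this lemma to reduce Codazzi.
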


\begin{proof}
Fix $t$, a point of $\Sigma_t$, and $W\in T\Sigma_t$. For small $s$, set
\[
    N_s=\frac{\nu+sW}{|\nu+sW|}.
\]
Then $N_0=\nu$ and $N'_0=W$.

Let $\Pi\subset T\Sigma_t$ be a $p$-plane with orthonormal basis
$U_1,\ldots,U_p$. Choose smooth orthonormal vectors $U_a(s)\in N_s^\perp$
such that $U_a(0)=U_a$ and
$U_a'(0)=-\langle U_a,W\rangle\nu$. By the $\Ric_p$ inequality,
\[
    F(s):=\sum_{a=1}^p R^M(U_a(s),N_s,U_a(s),N_s)\ge p .
\]
By Lemma \ref{lem:gp-radial-curvature-identity}, $F(0)=p$, and hence
$F'(0)=0$. Differentiating at $s=0$ gives
\begin{align*}
    0
    &=
    \sum_{a=1}^p
    \Bigl[
    R^M(U_a,W,U_a,\nu)
    +
    R^M(U_a,\nu,U_a,W)
    \Bigr]                                      \\
    &=
    2\sum_{a=1}^p R^M(U_a,\nu,U_a,W),
\end{align*}
where the terms involving $U_a'(0)$ vanish because
$U_a'(0)$ is proportional to $\nu$. Thus
\[
    \sum_{a=1}^p R^M(U_a,\nu,U_a,W)=0
\]
for every $p$-plane $\Pi\subset T\Sigma_t$.

For fixed $W$, define the symmetric bilinear form
\[
    S_W(Y,Z)
    =
    \frac12
    \left(
    R^M(Y,\nu,Z,W)+R^M(Z,\nu,Y,W)
    \right)
\]
on $T\Sigma_t$. The identity above says that $\tr_\Pi S_W=0$ for every
$p$-plane $\Pi$. If $\lambda_1,\ldots,\lambda_m$ are the eigenvalues of
$S_W$, then every sum of $p$ distinct eigenvalues is zero. Since
$p\le m/2$, comparing two such sums which differ by one index shows that all
$\lambda_i$ are equal. Their $p$-fold sums are zero, so all $\lambda_i$
vanish. Thus $S_W=0$.

Therefore
\[
    R^M(Y,\nu,Z,W)=-R^M(Z,\nu,Y,W)
\]
for all $Y,Z,W\in T\Sigma_t$. Set $T(Y,Z,W)=R^M(Y,\nu,Z,W)$. The last
identity says that $T$ is skew in the first two variables, while the curvature
skew-symmetry in the last two variables gives
$T(Y,W,Z)=-T(Y,Z,W)$. Hence $T$ is alternating.

By the first Bianchi identity,
\[
    R^M(X,Y,Z,\nu)+R^M(Y,Z,X,\nu)+R^M(Z,X,Y,\nu)=0.
\]
Using pair symmetry, this becomes
\[
    T(Z,X,Y)+T(X,Y,Z)+T(Y,Z,X)=0.
\]
Since $T$ is alternating, the three terms are equal. Hence
$3T(X,Y,Z)=0$, and therefore $T=0$. Finally,
\[
    R^M(X,Y,\nu,Z)
    =
    R^M(\nu,Z,X,Y)
    =
    -R^M(Z,\nu,X,Y)
    =
    -T(Z,X,Y)
    =
    0 .
\]
This proves the claim.
\end{proof}

Now Codazzi implies that the splitting is parallel along each leaf.

\begin{lemma}
\label{lem:gp-Codazzi-leafwise-parallelness}
Let $P_t$ be the orthogonal projection onto the
$-\cot(\frac{\pi}{4}-t)$-eigenspace of $A(t)$. Then
\[
    \nabla^{\Sigma_t}P_t=0.
\]
\end{lemma}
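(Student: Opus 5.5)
We will show that $P_t$ is parallel by combining the Codazzi equation with the vanishing of the mixed curvature term (Lemma \ref{lem:gp-normal-index-curvature-vanishing}) and the fact that $A(t)$ has exactly two constant eigenvalues. By Lemmas \ref{lem:gp-pointwise-equality-blocks} and \ref{lem:gp-orientation-choice}, on $\Sigma_t$ we have
\[
    A=\lambda P_t+\mu(\Id-P_t),
    \qquad
    \lambda=-\cot\Bigl(\frac{\pi}{4}-t\Bigr),
    \quad
    \mu=\cot\Bigl(\frac{\pi}{4}+t\Bigr).
\]
Here $\lambda\neq\mu$, since $\lambda<0<\mu$ for $|t|<\pi/4$, and both are constant on $\Sigma_t$. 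The eigenspaces have constant ranks $p$ and $m-p$. Hence $E_t$ and $F_t$ are smooth distributions and $P_t=(A-\mu\Id)/(\lambda-\mu)$ is smooth.

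The first step is to write the Codazzi equation for $\Sigma_t$ inside $(M,g_M)$. Pulled back through $\Phi$, it reads
\[
    (\nabla^{\Sigma_t}_XA)Y-(\nabla^{\Sigma_t}_YA)X=\pm\bigl(R^M(X,Y)\nu\bigr)^{\top}.
\]
The sign depends on the convention, but it is irrelevant here. The tangential part of the right-hand side is determined by $R^M(X,Y,\nu,Z)$ for $Z\in T\Sigma_t$, and this vanishes by Lemma \ref{lem:gp-normal-index-curvature-vanishing}. So $\nabla^{\Sigma_t}A$ is totally symmetric. Since $\lambda$ and $\mu$ are constant, $\nabla A=(\lambda-\mu)\nabla P_t$, and therefore $\nabla P_t$ is also totally symmetric:
\[
    \langle(\nabla_XP_t)Y,Z\rangle
\]
is symmetric in $X,Y,Z$.

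Next we use the algebra of a projection. Differentiating $P_t^2=P_t$ gives
\[
    (\nabla_XP_t)P_t+P_t(\nabla_XP_t)=\nabla_XP_t,
\]
so $\nabla_XP_t$ maps $E_t$ into $F_t$ and $F_t$ into $E_t$. Thus $\langle(\nabla_XP_t)Y,Z\rangle=0$ whenever $Y,Z$ both lie in $E_t$ or both lie in $F_t$. Now take $X,Y,Z$ from $E_t\cup F_t$. By pigeonhole, two of the three vectors lie in the same block. By total symmetry we may move those two into the $Y,Z$ slots, where the expression vanishes. Trilinearity then gives $\nabla P_t\equiv 0$.

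The main obstacle is the first step. We must check carefully that the Codazzi right-hand side is exactly a component of the form $R^M(X,Y,\nu,Z)$ with $X,Y,Z$ tangent, so that Lemma \ref{lem:gp-normal-index-curvature-vanishing} applies in the pulled-back metric $g=\Phi^*g_M$. We must also confirm that the shape operator of $\Sigma_t$ computed in $g$ agrees with the one used in Lemma \ref{lem:gp-pointwise-equality-blocks}. Both points are local and follow because $\Phi$ is a local isometry onto its image on the regular collar. Everything after that is pointwise linear algebra.
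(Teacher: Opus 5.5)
Your proof is correct and follows the paper's argument. Both use the Codazzi equation with the mixed term killed by Lemma \ref{lem:gp-normal-index-curvature-vanishing}, the constancy and distinctness of the two eigenvalues so that $\nabla A=(\lambda-\mu)\nabla P_t$, and the off-diagonality of $\nabla_XP_t$ obtained from $P_t^2=P_t$. The difference is only in packaging. You combine Codazzi symmetry with self-adjointness of $\nabla_XP_t$, which you use implicitly, into total symmetry of $\langle(\nabla_XP_t)Y,Z\rangle$ and finish by pigeonhole. The paper instead treats the mixed case $X\in E_t$, $U\in F_t$ and the same-block cases separately.
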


\begin{proof}
Fix $t$, and write $\nabla=\nabla^{\Sigma_t}$. By Lemma
\ref{lem:gp-normal-index-curvature-vanishing}, the Codazzi equation reduces to
\[
    (\nabla_XA)Y=(\nabla_YA)X
\]
for all $X,Y\in T\Sigma_t$.

Set $E_t=\operatorname{im}P_t$ and $F_t=\ker P_t$. By Lemma
\ref{lem:gp-pointwise-equality-blocks},
\[
    A
    =
    -\cot\left(\frac{\pi}{4}-t\right)P_t
    +
    \cot\left(\frac{\pi}{4}+t\right)(\Id-P_t).
\]
The two coefficients are constant along $\Sigma_t$ and distinct. Hence the
Codazzi equation for $A$ gives
\[
    (\nabla_XP_t)Y=(\nabla_YP_t)X
\]
for all $X,Y\in T\Sigma_t$.

Let $K_X=\nabla_XP_t$. Since $P_t$ is an orthogonal projection, $K_X$ is
self-adjoint. Differentiating $P_t^2=P_t$ gives
\[
    K_XP_t+P_tK_X=K_X.
\]
Multiplying this identity by $P_t$ on both sides gives $P_tK_XP_t=0$.
Multiplying it by $\Id-P_t$ on both sides gives
\[
    (\Id-P_t)K_X(\Id-P_t)=0.
\]
Thus $K_X$ is off-diagonal with respect to the splitting
$T\Sigma_t=E_t\oplus F_t$:
\[
    K_X(E_t)\subset F_t,
    \qquad
    K_X(F_t)\subset E_t .
\]

We now show that all components of $K$ vanish. First take $X\in E_t$ and
$U\in F_t$. From $(\nabla_XP_t)U=(\nabla_UP_t)X$, we get $K_XU=K_UX$. The
left-hand side lies in $E_t$, while the right-hand side lies in $F_t$.
Therefore
\[
    K_XU=0,
    \qquad
    K_UX=0.
\]

Next take $X,Y\in E_t$. Since $K_X$ is off-diagonal, $K_XY\in F_t$. For every
$U\in F_t$, self-adjointness and the mixed vanishing give
\[
    \langle K_XY,U\rangle
    =
    \langle Y,K_XU\rangle
    =
    0.
\]
Hence $K_XY=0$. The same argument with the roles of $E_t$ and $F_t$ exchanged
shows that $K_UV=0$ for all $U,V\in F_t$.

Thus $K_Z=0$ for every $Z\in T\Sigma_t$, and therefore
$\nabla^{\Sigma_t}P_t=0$.
\end{proof}

The Riccati equation now propagates the splitting in the normal direction and
determines the warped product form.

\begin{lemma}
\label{lem:gp-normal-persistence-warped}
Let $P_t$ be the orthogonal projection onto the
$-\cot(\frac{\pi}{4}-t)$-eigenspace of $A(t)$. Then
\[
    \nabla_\nu P_t=0.
\]
Consequently, after the normal orientation is chosen as in Lemma
\ref{lem:gp-orientation-choice}, if $E_-$ and $E_+$ denote the
$-1$- and $+1$-eigenspaces of $A(0)$, and if $h_-^0=g_0|_{E_-}$ and
$h_+^0=g_0|_{E_+}$, then
\[
    g
    =
    dt^2
    +
    2\sin^2\left(\frac{\pi}{4}-t\right)h_-^0
    +
    2\sin^2\left(\frac{\pi}{4}+t\right)h_+^0.
\]
\end{lemma}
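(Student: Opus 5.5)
The plan has two parts: first derive $\nabla_\nu P_t=0$ from the Riccati equation, then integrate the metric along the normal flow.

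\emph{Step 1: $\nabla_\nu P_t=0$.} By Lemma \ref{lem:gp-radial-curvature-identity}, $B_\nu=\Id$ on $T\Sigma_t$, so Lemma \ref{lem:gp-shape-evolution} reads $\nabla_\nu A=-A^2-\Id$. Write $a(t)=-\cot(\frac\pi4-t)$ and $b(t)=\cot(\frac\pi4+t)$, so that
\[
A=aP_t+b(\Id-P_t)
\]
by Lemma \ref{lem:gp-pointwise-equality-blocks}. Both scalars solve $y'=-y^2-1$: indeed $\frac{d}{dt}\bigl(-\cot(\frac\pi4-t)\bigr)=-\csc^2(\frac\pi4-t)=-1-a^2$, and $b'=-\csc^2(\frac\pi4+t)=-1-b^2$. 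Since $A^2=a^2P_t+b^2(\Id-P_t)$, differentiating the decomposition gives
\[
\nabla_\nu A=a'P_t+b'(\Id-P_t)+(a-b)\nabla_\nu P_t .
\]
Comparing with $-A^2-\Id=a'P_t+b'(\Id-P_t)$ yields $(a-b)\nabla_\nu P_t=0$. For $|t|<\pi/4$ we have $a<0<b$, so $a\neq b$ and therefore $\nabla_\nu P_t=0$.

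\emph{Step 2: the distributions are $t$-invariant.} I regard everything on $\Sigma\times(-\pi/4,\pi/4)$ with Lie-transported fields $X$ satisfying $[\nu,X]=0$. Then $\nabla_\nu X=AX$, so for such $X$,
\[
\nabla_\nu(P_tX)=P_tAX=AP_tX,
\]
using that $P_t$ commutes with $A$. Hence if $X(0)\in E_-$, the field $Y=P_tX-X$ satisfies the linear ODE $\nabla_\nu Y=AY$ with $Y(0)=0$, so $Y\equiv0$. Thus $E_t$ is the image of $E_-$ under the identification $\Sigma_t\simeq\Sigma$, and likewise $F_t$ corresponds to $E_+$. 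With the orientation fixed as in Lemma \ref{lem:gp-orientation-choice}, at $t=0$ the eigenvalues are $a(0)=-1$ and $b(0)=1$, which matches the labelling of $E_\pm$.

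\emph{Step 3: the metric.} For Lie-transported $X,Y$,
\[
\partial_t g(X,Y)=2g(AX,Y).
\]
For $X,Y\in E_-$ this gives $\partial_t g(X,Y)=2a\,g(X,Y)$. Since $(\log\sin^2(\frac\pi4-t))'=-2\cot(\frac\pi4-t)=2a$ and the value at $t=0$ is $\frac12$, we get $g=2\sin^2(\frac\pi4-t)h^0_-$ on $E_-$. Similarly $g=2\sin^2(\frac\pi4+t)h^0_+$ on $E_+$. For mixed pairs, $g(X,U)$ satisfies $\partial_t g(X,U)=(a+b)g(X,U)$ with zero initial value, so the two blocks stay orthogonal. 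Together with $g(\partial_t,\partial_t)=1$ and $\partial_t\perp\Sigma_t$, this gives the stated doubly warped form.

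\emph{Main obstacle.} The delicate point is Step 2: passing from $\nabla_\nu P_t=0$, a statement about a covariant derivative, to the claim that the splitting is literally constant under the normal flow identification. This requires using that $P_t$ commutes with $A$ and that $\nabla_\nu X=AX$ for Lie-transported fields. The ODE argument above handles this.
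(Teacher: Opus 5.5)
Your proof is correct and follows essentially the paper's route: the Riccati equation with $B_\nu=\Id$ forces $(a-b)\nabla_\nu P_t=0$, and then $\partial_t g=2A$ is integrated blockwise. The differences are minor: you cancel the diagonal terms using the scalar equations $a'=-1-a^2$, $b'=-1-b^2$ where the paper takes the off-diagonal part. Your Lie-transport argument in Step 2 also spells out the normal-flow invariance of the eigendistributions that the paper compresses into ``$\nabla P_t=0$, hence the metric splits'', and it shows that the leafwise parallelism from the Codazzi lemma is not needed for this particular formula.
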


\begin{proof}
Since
\[
    A
    =
    -\cot\left(\frac{\pi}{4}-t\right)P_t
    +
    \cot\left(\frac{\pi}{4}+t\right)(\Id-P_t),
\]
the off-diagonal part of $\nabla_\nu A$ is
\[
    (\nabla_\nu A)_{\mathrm{off}}
    =
    -\left(
    \cot\left(\frac{\pi}{4}-t\right)
    +
    \cot\left(\frac{\pi}{4}+t\right)
    \right)\nabla_\nu P_t.
\]
By Lemma \ref{lem:gp-radial-curvature-identity}, $B_\nu|_{T\Sigma_t}=\Id$,
hence $B_\nu$ is block diagonal. Also $A^2$ is block diagonal. Taking the
off-diagonal part of the Riccati equation $\nabla_\nu A+A^2+B_\nu=0$ gives
\[
    -\left(
    \cot\left(\frac{\pi}{4}-t\right)
    +
    \cot\left(\frac{\pi}{4}+t\right)
    \right)\nabla_\nu P_t=0.
\]
Since the coefficient is positive, $\nabla_\nu P_t=0$.

Together with Lemma \ref{lem:gp-Codazzi-leafwise-parallelness}, this gives
$\nabla P_t=0$ on the regular normal-flow region. Hence the metric splits
locally as
\[
    g=dt^2+h_-(t)+h_+(t).
\]
Using $\partial_t g_t=2A$, we obtain
\[
    \partial_t h_-
    =
    -2\cot\left(\frac{\pi}{4}-t\right)h_-,
    \qquad
    \partial_t h_+
    =
    2\cot\left(\frac{\pi}{4}+t\right)h_+.
\]
Since
\[
    \frac{d}{dt}\log\sin^2\left(\frac{\pi}{4}-t\right)
    =
    -2\cot\left(\frac{\pi}{4}-t\right),
\]
and
\[
    \frac{d}{dt}\log\sin^2\left(\frac{\pi}{4}+t\right)
    =
    2\cot\left(\frac{\pi}{4}+t\right),
\]
we get
\[
    \frac{d}{dt}
    \left[
    \frac{h_-(t)}{\sin^2(\frac{\pi}{4}-t)}
    \right]
    =
    0,
    \qquad
    \frac{d}{dt}
    \left[
    \frac{h_+(t)}{\sin^2(\frac{\pi}{4}+t)}
    \right]
    =
    0.
\]
Because $\sin^2(\frac{\pi}{4})=1/2$, this gives
\[
    h_-(t)
    =
    2\sin^2\left(\frac{\pi}{4}-t\right)h_-^0,
    \qquad
    h_+(t)
    =
    2\sin^2\left(\frac{\pi}{4}+t\right)h_+^0.
\]
\end{proof}

\subsection{Proof of the rigidity theorem}
\label{subsec:gp-focalpoint-roundness}

We now assume that $M$ is complete. Let
\[
    \bar\Phi:\Sigma\times\RR\to M,
    \qquad
    \bar\Phi(x,t)=\exp_{F(x)}(t\nu_x),
\]
be the globally defined normal exponential map. On
$\Sigma\times(-\pi/4,\pi/4)$, it agrees with $\Phi$.

After the orientation choice in Lemma \ref{lem:gp-orientation-choice}, let
$E_-$ and $E_+$ denote the $-1$- and $+1$-eigenspaces of $A(0)$, and set
$h_-^0=g_0|_{E_-}$ and $h_+^0=g_0|_{E_+}$. With $s=\pi/4-t$, Lemma
\ref{lem:gp-normal-persistence-warped} gives
\begin{equation}\label{eq:gp-round-open-join-metric}
    g
    =
    ds^2
    +
    2\sin^2s\,h_-^0
    +
    2\cos^2s\,h_+^0,
    \qquad
    0<s<\frac{\pi}{2}.
\end{equation}

\begin{proposition}
\label{prop:gp-focalpoint-roundness}
Assume that $M$ is complete. Then $2h_-^0$ is locally the unit round metric
along the $E_-$-leaves, and $2h_+^0$ is locally the unit round metric along
the $E_+$-leaves. Consequently, locally near $F(\Sigma)$, the hypersurface
$F$ is congruent to the standard Clifford hypersurface
\[
    S^p\left(\frac1{\sqrt2}\right)
    \times
    S^{m-p}\left(\frac1{\sqrt2}\right)
    \subset S^{m+1}(1).
\]
\end{proposition}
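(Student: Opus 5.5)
We use the doubly warped form \eqref{eq:gp-round-open-join-metric} together with the smooth extension $\bar\Phi$ of the normal exponential map up to the two focal levels. Work locally on a product neighbourhood $U=U_-\times U_+\subset\Sigma$, where $U_\pm$ are pieces of the $E_\pm$-leaves. This is possible because $\nabla P_t=0$ (Lemma \ref{lem:gp-Codazzi-leafwise-parallelness} and Lemma \ref{lem:gp-normal-persistence-warped}) makes both distributions integrable and totally geodesic, with $g_0=h_-^0+h_+^0$ a local Riemannian product.

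\textbf{Step 1: the focal levels.} At $s\to 0$ (i.e.\ $t=\pi/4$), the map $\bar\Phi$ is smooth on $U\times\mathbb R$. The pulled-back metric $ds^2+2\sin^2 s\,h_-^0+2\cos^2s\,h_+^0$ shows that $d\bar\Phi$ kills exactly $E_-$ at $s=0$. Hence $\bar\Phi(\cdot,\pi/4)$ is constant along $E_-$-leaves and immerses the $E_+$-direction; this gives the focal submanifold $N_+$ with induced metric $2h_+^0$. Analogously, $N_-$ at $s=\pi/2$ carries $2h_-^0$.

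\textbf{Step 2: roundness of the collapsed leaf.} Fix $y\in U_+$ and consider the $E_-$-leaf $L$ through it. Every point of $L$ is sent to the single point $z=\bar\Phi(x,\pi/4)$. The map $x\mapsto -\partial_s\bar\Phi(x,0)$ therefore sends $L$ into the unit sphere $S_z$ of the normal space $\nu_z N_+\cong\RR^{p+1}$. For $X\in E_-$, the field $J(s)=d\bar\Phi(X)$ along the geodesic is a Jacobi field with $J(0)=0$. Its first nonzero Taylor term is $D_sJ(0)$, which equals the derivative of that direction map applied to $X$. On the other hand, from the metric, $|J(s)|^2=2\sin^2 s\,h_-^0(X,X)$, so $|D_sJ(0)|^2=2h_-^0(X,X)$. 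Hence the direction map $L\to S_z$ is a local isometry from $(L,2h_-^0)$ to the round unit $S^p$ (dimension $p$, by Lemma \ref{lem:gp-orientation-choice}). The same argument at $s=\pi/2$ shows that $2h_+^0$ on the $E_+$-leaves is locally isometric to the unit $S^{m-p}$. We must check that the direction map is an immersion, i.e.\ that $D_sJ(0)\ne0$ and is tangent to the unit sphere. Both follow from the explicit norm and from orthogonality to $\partial_s$. This identification of the first Jacobi term is the main technical point. One also has to confirm that $\bar\Phi$ near the focal point is genuinely the normal exponential map of $N_+$: the geodesics $s\mapsto\bar\Phi(x,s)$ leave $N_+$ orthogonally because $d\bar\Phi(E_+)$ at $s=0$ spans $TN_+$ and is orthogonal to $\partial_s$.

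\textbf{Step 3: conclusion.} With $2h_\pm^0$ round, the metric \eqref{eq:gp-round-open-join-metric} becomes $ds^2+\sin^2s\,g_{S^p}+\cos^2s\,g_{S^{m-p}}$. This is exactly the spherical join metric of $S^{m+1}(1)$, written in Clifford coordinates $(\sin s\,u,\cos s\,v)$. Thus the collar $\bar\Phi(U\times(-\pi/4,\pi/4))$ is locally isometric to an open subset of $S^{m+1}(1)$. Under this isometry the slice $s=\pi/4$, i.e.\ $t=0$, corresponds to $\{|u|=|v|=1/\sqrt2\}$, and its second fundamental form matches $A(0)=-\Id_{E_-}\oplus\Id_{E_+}$. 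The local congruence with $S^p(1/\sqrt2)\times S^{m-p}(1/\sqrt2)$ then follows, either by rigidity of hypersurfaces in space forms with the same first and second fundamental forms, or directly from the explicit isometry of coordinates.
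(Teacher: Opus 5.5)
Your proposal is correct and follows essentially the same route as the paper. Both arguments extend the normal exponential map to the focal levels, map each collapsed $E_-$-leaf (resp.\ $E_+$-leaf) into the unit normal sphere of the focal sheet via the geodesic direction, and identify $2h_\mp^0$ with the round metric by comparing $s^2|D_sJ(0)|^2$ with $2\sin^2 s\,h_\mp^0$ and $2\cos^2 s\,h_\mp^0$. Both then read off the round join metric in which the middle slice is the Clifford hypersurface; the sign you put on your direction map differs from the paper's $D_+$, but this only changes it by the antipodal isometry and is immaterial.
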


\begin{proof}
Define
\[
    f_+=\bar\Phi\left(\cdot,\frac{\pi}{4}\right),
    \qquad
    f_-=\bar\Phi\left(\cdot,-\frac{\pi}{4}\right).
\]
For $X\in E_-$ and $U\in E_+$, the metric formula gives
\[
    |d\bar\Phi_{(x,t)}X|^2
    =
    2\sin^2\left(\frac{\pi}{4}-t\right)h_-^0(X,X),
    \qquad
    |d\bar\Phi_{(x,t)}U|^2
    =
    2\sin^2\left(\frac{\pi}{4}+t\right)h_+^0(U,U).
\]
Taking $t\to\pi/4$ gives $\ker df_+=E_-$ and
$|df_+(U)|^2=2h_+^0(U,U)$ for $U\in E_+$. Hence
$\operatorname{rank}df_+=m-p$. Similarly, $\ker df_-=E_+$ and
$\operatorname{rank}df_-=p$. Thus the images of $f_+$ and $f_-$ are locally
immersed sheets.

We prove the roundness of $2h_-^0$; the proof for $2h_+^0$ is identical.
Let $L_-$ be a connected component of a fibre of $f_+$, and write
$y=f_+(L_-)$. Let $N_+$ be the local immersed sheet of $f_+(\Sigma)$ through
$y$ determined by this fibre component. Define
\[
    D_+:L_-\to S(\nu_yN_+),
    \qquad
    D_+(x)=-\partial_t\bar\Phi\left(x,\frac{\pi}{4}\right).
\]
Indeed, for $U\in E_+$, the vector $df_+(U)$ is tangent to $N_+$, and
orthogonality of $\partial_t$ to the slices gives
\[
    \left\langle
    -\partial_t\bar\Phi\left(x,\frac{\pi}{4}\right),
    df_+(U)
    \right\rangle
    =
    \lim_{t\to\pi/4}
    \left\langle
    -\partial_t\bar\Phi(x,t),
    d\bar\Phi_{(x,t)}U
    \right\rangle
    =
    0.
\]
Since $\partial_t\bar\Phi$ has unit length, $D_+(x)\in S(\nu_yN_+)$.

Fix $x\in L_-$ and $X\in T_xL_-=E_-(x)$. For small $s>0$, set
\[
    \alpha_x(s)=\bar\Phi\left(x,\frac{\pi}{4}-s\right).
\]
Then $\alpha_x(0)=y$, $\alpha_x'(0)=D_+(x)$, and
$\alpha_x(s)=\exp_y(sD_+(x))$. Varying $x$ inside $L_-$ gives the Jacobi
field
\[
    J_X(s)=d\bar\Phi_{(x,\pi/4-s)}X
\]
along $\alpha_x$. Since $f_+$ is constant on $L_-$, we have $J_X(0)=0$, and
differentiating the initial velocity gives $D_sJ_X(0)=dD_+(X)$. Therefore
\[
    |J_X(s)|^2=s^2|dD_+(X)|^2+O(s^3).
\]
On the other hand, the exact metric formula \eqref{eq:gp-round-open-join-metric}
gives
\[
    |J_X(s)|^2
    =
    2\sin^2s\,h_-^0(X,X)
    =
    2s^2h_-^0(X,X)+O(s^4).
\]
Comparing the first nonzero terms and polarizing, we obtain
\[
    D_+^*g_{S^p(1)}=2h_-^0|_{L_-}.
\]
Thus $2h_-^0$ is locally the unit round metric along the $E_-$-leaves.

At the other focal point, using $s=\pi/4+t$, a connected component $L_+$ of a
fibre of $f_-$, and $D_-(x)=\partial_t\bar\Phi(x,-\pi/4)$, the same argument
gives
\[
    D_-^*g_{S^{m-p}(1)}=2h_+^0|_{L_+}.
\]
Thus $2h_+^0$ is locally the unit round metric along the $E_+$-leaves.

By Lemma \ref{lem:gp-Codazzi-leafwise-parallelness}, the splitting
$T\Sigma=E_-\oplus E_+$ is parallel with respect to $g_0$. Hence, locally,
$g_0$ is the Riemannian product of the two leaf metrics. Since
$2h_-^0$ and $2h_+^0$ are locally unit round, the middle slice is locally
\[
    \frac12g_{S^p(1)}+\frac12g_{S^{m-p}(1)}.
\]
Consequently the collar metric is locally
\[
    ds^2+\sin^2s\,g_{S^p(1)}+\cos^2s\,g_{S^{m-p}(1)},
\]
the round join metric on $S^{m+1}(1)$ away from its two focal factors. At
$s=\pi/4$, the middle slice is precisely
\[
    S^p\left(\frac1{\sqrt2}\right)
    \times
    S^{m-p}\left(\frac1{\sqrt2}\right)
    \subset S^{m+1}(1).
\]
This proves the local Clifford congruence.
\end{proof}

\begin{proposition}
\label{prop:gp-compact-global-conclusion}
Assume that $M$ is connected and compact. Then $(M,g_M)$ has constant
sectional curvature $1$. Hence
\[
    (M,g_M)\cong S^{m+1}(1)/\Gamma,
\]
where $\Gamma\subset O(m+2)$ is a finite group acting freely by isometries.
Moreover, the lift of $F$ to the universal cover $S^{m+1}(1)$ is locally
congruent to the standard Clifford hypersurface.
\end{proposition}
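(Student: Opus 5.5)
The plan is to show that the closure of the regular collar $\bar\Phi(\Sigma\times(-\pi/4,\pi/4))$ is all of $M$, and that on the open collar $\sec_M\equiv 1$. Continuity of curvature then gives $\sec_M\equiv1$ on $M$. The space-form conclusion and the statement about lifts follow from Killing--Hopf and Proposition \ref{prop:gp-focalpoint-roundness}.

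First, constant curvature on the collar. By Proposition \ref{prop:gp-focalpoint-roundness}, the pulled-back metric on $\Sigma\times(-\pi/4,\pi/4)$ is locally
\[
    ds^2+\sin^2s\,g_{S^p(1)}+\cos^2s\,g_{S^{m-p}(1)},
\]
which is the round join metric and has sectional curvature $1$. Since $\Phi$ is a local isometry onto its image, every point of $U:=\Phi(\Sigma\times(-\pi/4,\pi/4))$ has all sectional curvatures equal to $1$.

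Second, density of $U$. Since $M$ is compact, it is complete, and $F(\Sigma)$ is compact. I claim every $y\in M$ satisfies $d(y,F(\Sigma))\le\pi/4$. Suppose instead that $d(y,F(\Sigma))=\ell>\pi/4$. Take a minimizing geodesic $\gamma$ from $F(\Sigma)$ to $y$; by the first variation formula it leaves $F(\Sigma)$ orthogonally, in direction $\pm\nu$ at some $x$. A minimizing geodesic to a submanifold has no focal point strictly before its endpoint. Hence along $t\mapsto\bar\Phi(x,\pm t)$ there is no focal point on $[0,\ell)$, and in particular none at $t=\pi/4$.

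This contradicts the explicit description of the focal maps. Along every normal geodesic, the fields $J_X$ for $X\in E_-$ (resp.\ $E_+$) vanish at $t=\pi/4$ (resp.\ $t=-\pi/4$); this is exactly $\ker df_\pm$ from the proof of Proposition \ref{prop:gp-focalpoint-roundness}. Since $E_\pm$ are nonzero ($p\ge1$), every normal geodesic in both directions hits a focal point at distance exactly $\pi/4$. Thus $M=\bar\Phi(\Sigma\times[-\pi/4,\pi/4])$, and $U$ is dense because it is the image of the dense set $\Sigma\times(-\pi/4,\pi/4)$ under the continuous map $\bar\Phi$. I expect this step to be the main obstacle. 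It requires care that the minimizer is realized by a normal geodesic of the immersion, via a nearest point in the compact image and the first-variation argument on the immersed sheet, and that the standard ``no focal point before a minimizing endpoint'' fact applies to immersions. I would handle this by working locally on a sheet, where the immersion is an embedding.

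Third, conclusion. The curvature tensor is continuous and $U$ is dense, so $R^M=g_M\owedge g_M$ everywhere. Then $M$ is a compact, hence complete, connected manifold of constant curvature $1$. By Killing--Hopf its universal cover is $S^{m+1}(1)$, and $M\cong S^{m+1}(1)/\Gamma$ with $\Gamma\subset O(m+2)$ acting freely. $\Gamma$ is finite since $M$ is compact, so $\pi_1(M)$ is finite. Lifting $F$ locally through the covering, which is a local isometry, preserves the normal exponential map. Proposition \ref{prop:gp-focalpoint-roundness} (or, directly, the join-metric description inside $S^{m+1}(1)$) then gives local congruence of the lift to $S^p(1/\sqrt2)\times S^{m-p}(1/\sqrt2)$.
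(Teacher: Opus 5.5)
Your proposal is correct and follows the paper's proof essentially step for step. The steps are: constant curvature on the regular collar via Proposition~\ref{prop:gp-focalpoint-roundness}; the nearest-point and minimizing-geodesic argument, which shows every point lies within $\pi/4$ of $F(\Sigma)$ because the $E_\mp$-directions produce focal points at $t=\pm\pi/4$; density of the collar; continuity of curvature; and Killing--Hopf. The one small difference is the density step, where you use $\bar\Phi\bigl(\Sigma\times[-\pi/4,\pi/4]\bigr)\subset\overline{\bar\Phi\bigl(\Sigma\times(-\pi/4,\pi/4)\bigr)}$ by continuity of $\bar\Phi$. The paper instead argues that the focal images $f_\pm(\Sigma)$ come from rank-deficient maps and so have empty interior; your version is purely topological and slightly simpler.
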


\begin{proof}
Since $M$ is compact, it is complete. By Proposition
\ref{prop:gp-focalpoint-roundness}, the open normal-flow image
\[
    \mathcal U
    =
    \bar\Phi\left(
    \Sigma\times
    \left(-\frac{\pi}{4},\frac{\pi}{4}\right)
    \right)
\]
is locally isometric to the regular part of the round join
$S^p(1)*S^{m-p}(1)$. Hence $\sec_M\equiv1$ on $\mathcal U$. We show that
$\mathcal U$ is dense in $M$.

First, every point of $M$ lies in the closed normal image
\[
    \bar\Phi\left(
    \Sigma\times
    \left[-\frac{\pi}{4},\frac{\pi}{4}\right]
    \right).
\]
Let $q\in M$. Since $M$ and $F(\Sigma)$ are compact, there exists
$x\in\Sigma$ such that $d(q,F(x))=\operatorname{dist}(q,F(\Sigma))$. Let
$\gamma:[0,\ell]\to M$ be a unit-speed minimizing geodesic from $F(x)$ to
$q$. By the first variation formula, $\gamma'(0)=\nu_x$ or
$\gamma'(0)=-\nu_x$. If $\ell\le\pi/4$, then $q$ belongs to the closed
normal image.

Suppose $\ell>\pi/4$. If $\gamma'(0)=\nu_x$, then
$d\bar\Phi_{(x,\pi/4)}(E_-(x))=0$, so $\gamma(\pi/4)$ is a focal point of
$F(\Sigma)$ along $\gamma$. If $\gamma'(0)=-\nu_x$, the same conclusion holds
at $t=-\pi/4$ using the $E_+$-directions. In either case, the minimizing
geodesic from $F(\Sigma)$ to $q$ contains a focal point before its endpoint,
contradicting the Morse index theorem for geodesics with initial endpoint
constrained to a submanifold and final endpoint fixed. Hence $\ell\le\pi/4$.

The complement of $\mathcal U$ in the closed normal image is contained in
$f_+(\Sigma)\cup f_-(\Sigma)$. The rank computation in Proposition
\ref{prop:gp-focalpoint-roundness} gives
$\operatorname{rank}df_+=m-p$ and $\operatorname{rank}df_-=p$. Since both
ranks are strictly less than $m+1=\dim M$, the images of $f_+$ and $f_-$ have
empty interior. Since the closed normal image is all of $M$, the open set
$\mathcal U$ is dense in $M$.

The curvature tensor is continuous. Since $\sec_M\equiv1$ on the dense open
set $\mathcal U$, we get $\sec_M\equiv1$ on all of $M$. Therefore the
universal cover of $M$ is the round sphere $S^{m+1}(1)$. The deck group acts
freely and isometrically on this sphere, and is finite because $M$ is compact.
Thus
\[
    (M,g_M)\cong S^{m+1}(1)/\Gamma
\]
for a finite group $\Gamma\subset O(m+2)$ acting freely by isometries.

Finally, Proposition \ref{prop:gp-focalpoint-roundness} shows that, after lifting
to the universal cover, the lifted collar is locally the round join and the
lifted hypersurface is locally the middle Clifford slice. Hence the lift of
$F$ to $S^{m+1}(1)$ is locally congruent to the standard Clifford
hypersurface.
\end{proof}

\begin{proof}[Proof of Theorem \ref{thm:gp-rigidity}]
First note that Assumption \ref{ass:gp-Ric-p} implies $\Ric_M\ge m$.  Since $M$ is connected and complete, Bonnet--Myers implies
that $M$ is compact. The first conclusion follows from Proposition
\ref{prop:gp-compact-global-conclusion}. The second conclusion is the final
assertion of the same proposition.
\end{proof}
\section{Quotient lifting and projective spaces}
\label{sec:projective-spaces}

The Betti-number hypothesis in Theorem \ref{thm:gp-focal-estimate} is not
stable under quotients. A hypersurface downstairs may have no useful
intermediate cohomology, while its natural lift to the total space may acquire
such cohomology. This gives a simple mechanism for removing the Betti-number
assumption from the downstairs statement.

We first record the general quotient-lifting principle. The projective-space
estimates then follow from the Hopf fibrations. In the complex case, focal
sets of real hypersurfaces in projective space were studied classically by
Cecil--Ryan \cite{CecilRyan1982}. Here the mechanism is different: we lift to
the total space, apply the hypersurface theorem there, and descend the
conclusion.

\begin{theorem}
\label{thm:quotient-lifting}
Let $E^{2N+1}$ satisfy Assumptions \ref{ass:gp-ambient-p-Bochner} and
\ref{ass:gp-Ric-p} with \(m=2N\), for every $1\le p\le N$. A sufficient curvature condition
is
\[
    \mathcal R_E\ge \Id_{\Lambda^2TE}.
\]
Let a compact Lie group $G$, with $d=\dim G\ge1$, act freely and
isometrically on $E$, and give $M=E/G$ the quotient metric, so that
the quotient map \(\pi:E\to M\) is a Riemannian submersion. If
$F:\Sigma^{2N-d}\to M$ is a closed connected immersed hypersurface, then
\[
    r_f(F,M)\le\frac{\pi}{4}.
\]
For a one-sided immersion, $r_f$ is understood on the two-sided normal cover.
If equality holds and \(E\) is complete, then on the connected component of
\(E\) containing the lifted component, the equality case of
Theorem \ref{thm:gp-rigidity} gives a local spherical space-form model, and the
lifted hypersurface is locally congruent to a Clifford hypersurface.
\end{theorem}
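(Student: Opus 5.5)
Let $\tilde\Sigma=\pi^{-1}(F(\Sigma))$ in the sense of the pullback: form the principal $G$-bundle $\tilde\Sigma=F^*E=\{(x,e)\in\Sigma\times E:\ F(x)=\pi(e)\}$, with the immersion $\tilde F:\tilde\Sigma\to E$, $\tilde F(x,e)=e$. Since $\pi$ is a submersion and $F$ an immersion, $\tilde F$ is an immersed hypersurface of dimension $2N-d+d=2N=m$, closed because $G$ and $\Sigma$ are compact. Pass to the two-sided normal cover if needed, and to a connected component of $\tilde\Sigma$. The unit normal $\nu$ of $F$ lifts horizontally to a unit normal $\tilde\nu$ of $\tilde F$, since the vertical directions are tangent to $\tilde\Sigma$. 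So $\tilde F$ is two-sided.

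The first key step is to compare focal radii. Horizontal geodesics project to geodesics, so $\pi(\exp_e(t\tilde\nu))=\exp_{F(x)}(t\nu)$, i.e. $\pi\circ\tilde\Phi=\Phi\circ(\mathrm{pr},\mathrm{id})$. I would show that $\tilde\Phi_t$ is nonsingular exactly when $\Phi_t$ is. The differential of $\tilde\Phi_t$ maps vertical vectors to vertical vectors: the $G$-action commutes with $\tilde\Phi_t$ by equivariance of horizontal lifts and of $\tilde\nu$, and the fibre orbits stay injective since the action is free. On a horizontal complement, the differential of $\tilde\Phi_t$ covers $d\Phi_t$. Hence $d\tilde\Phi_t$ is injective iff $d\Phi_t$ is, which gives $r_f(\tilde F,E)=r_f(F,M)$.

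The second, topological step is the main obstacle: I need $b_p(\tilde\Sigma;\RR)\neq0$ for some $1\le p\le N$. Here $\tilde\Sigma^{2N}$ is a closed manifold fibred over $\Sigma^{2N-d}$ with compact connected fibre $G_0$ (restrict to the identity component, which still acts freely, so the component is a $G_0$-bundle over a cover of $\Sigma$).
\begin{itemize}
\item First I would try $p=d$ when $d\le N$. Integration of a fibre volume form suggests this, but exactness could kill it.
\item A cleaner route: $\tilde\Sigma$ is a closed $2N$-manifold, and if all $b_p=0$ for $1\le p\le N$, then by Poincaré duality the Euler characteristic is $\chi=2$ (or $1+1$). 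But a free action of a positive-dimensional compact connected Lie group contains a free circle action, which forces $\chi(\tilde\Sigma)=0$. This is a contradiction, so some $b_p\neq0$ with $1\le p\le N$.
\item Orientability for Poincaré duality follows after passing to an orientation cover; this keeps the free circle action and the same focal radius.
\end{itemize}
Then Theorem \ref{thm:gp-focal-estimate} in degree $p$ gives $r_f(F,M)=r_f(\tilde F,E)\le\pi/4$. The sufficient condition $\mathcal R_E\ge\Id$ is Example \ref{ex:curvature-operator-one}.

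For equality with $E$ complete, apply Theorem \ref{thm:gp-rigidity} to $\tilde F$ on the relevant component of $E$. This needs only connectedness of that component, so we obtain a spherical space form and a local Clifford model for the lift.
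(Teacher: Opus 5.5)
Your proposal is correct and follows essentially the same route as the paper. Both arguments use the fibre-product lift $F^*E$ and show the focal radii agree: the horizontal normal geodesics project to the downstairs ones, and $d\widehat\Phi_t$ maps $\ker dp$ isomorphically onto the vertical space. Both then get $b_p\neq0$ for some $1\le p\le N$ from $\chi=0$ (a nowhere-vanishing fundamental field) together with Poincar\'e duality on an orientable (double) cover, and finish by applying Theorems \ref{thm:gp-focal-estimate} and \ref{thm:gp-rigidity} upstairs. The only cosmetic difference is that you obtain the nonvanishing field from a circle subgroup, whereas the paper takes $X^\#$ for any nonzero $X\in\mathfrak g$ directly; on the double cover it is enough that this vector field lifts, or that $\chi$ doubles, so you need not lift the circle action itself.
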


\begin{proof}
Passing to the two-sided normal cover does not change the focal radius, so we
may assume that $F$ has a global unit normal $\nu$. Since $\pi:E\to M$ is a
submersion, the fibre product
\[
    \widehat\Sigma=F^*E
    =
    \{(x,z)\in\Sigma\times E:\ F(x)=\pi(z)\}
\]
is a closed smooth manifold and a principal $G$-bundle over $\Sigma$. The map
$\widehat F:\widehat\Sigma\to E$, $\widehat F(x,z)=z$, is an immersion:
if $d\widehat F(v,w)=0$, then $w=0$, and the relation
$dF(v)=d\pi(w)$ gives $dF(v)=0$, hence $v=0$. Since
$\dim\widehat\Sigma=\dim\Sigma+d=2N$, the lift
$\widehat F:\widehat\Sigma\to E^{2N+1}$ is a closed immersed hypersurface.

At $\hat x=(x,z)$, the image of $d\widehat F$ is
\[
    \mathcal V_z\oplus \widehat{dF_x(T_x\Sigma)},
    \qquad
    \mathcal V_z=\ker d\pi_z,
\]
where the hat denotes horizontal lift. Thus the horizontal lift
$\widehat\nu$ of $\nu$ is a unit normal to $\widehat F$.

We next claim that focal radii agree. Let the projection map \(p:\widehat\Sigma\to\Sigma,\  p(x,z)=x,\)   and define
\[
    \Phi_t(x)=\exp_{F(x)}(t\nu_x),\qquad
    \widehat\Phi_t(x,z)=\exp_z(t\widehat\nu_{(x,z)}).
\]
Fix \(\hat x=(x,z)\), and write \(\widehat\gamma(t)=\widehat\Phi_t(\hat x)\) and \(\gamma(t)=\Phi_t(x)\). Since the \(G\)-action is isometric, the fundamental fields \(X^\#\),
\(X\in\mathfrak g\), are Killing. Hence the function \( t\mapsto \left\langle
        \widehat\gamma'(t),X^\#_{\widehat\gamma(t)}
    \right\rangle\) is constant. It vanishes at \(t=0\), because
\(\widehat\gamma'(0)=\widehat\nu_{\hat x}\) is horizontal and \(X^\#_z\) is
vertical. Since the vectors \(X^\#\) span the vertical distribution,
\(\widehat\gamma\) remains horizontal. Therefore, by the standard property of
Riemannian submersions, \(\widehat\gamma\) projects to the downstairs normal
geodesic \(\gamma\). Equivalently,
\[
    \pi\circ\widehat\Phi_t=\Phi_t\circ p .
\]
Differentiating gives the commutative diagram
\[
\begin{array}{ccc}
T_{\hat x}\widehat\Sigma
& \xrightarrow{\ d\widehat\Phi_t\ }
& T_{\widehat\gamma(t)}E
\\[0.3em]
dp\downarrow
& &
\downarrow d\pi
\\[0.3em]
T_x\Sigma
& \xrightarrow{\ d\Phi_t\ }
& T_{\gamma(t)}M .
\end{array}
\]
Note that \(\ker dp_{\hat x}
    =
    \{(0,X^\#_z):X\in\mathfrak g\}\), 
and the equivariance of \(\widehat\Phi_t\) gives \( d\widehat\Phi_t(0,X^\#_z)
    =
    X^\#_{\widehat\gamma(t)}\). Since the \(G\)-action is free, \(X\mapsto X^\#_q\) is an isomorphism from
\(\mathfrak g\) onto \(\mathcal V_q=\ker d\pi_q\) for every \(q\in E\). The above facts imply
\[
    d\widehat\Phi_t:
    \ker dp_{\hat x}
    \longrightarrow
    \ker d\pi_{\widehat\gamma(t)}
\]
is an isomorphism. It then follows from the commutative diagram that
\(d\widehat\Phi_t\) has nontrivial kernel if and only if \(d\Phi_t\) has
nontrivial kernel. Thus the focal times of \(\widehat F\) along
\(\widehat\nu\) agree with the focal times of \(F\) along \(\nu\), and therefore
\[
    r_f(\widehat F,E)=r_f(F,M).
\]

It remains to verify the Betti-number hypothesis upstairs. Let
$\widehat\Sigma_0$ be a connected component of $\widehat\Sigma$. Since
$d=\dim G\ge1$, a nonzero element $X\in\mathfrak g$ gives a nowhere-vanishing
fundamental vector field $X^\#$ tangent to $\widehat\Sigma_0$. Poincar\'e-Hopf then implies
$\chi(\widehat\Sigma_0)=0$. If $\widehat\Sigma_0$ is orientable, Poincar\'e
duality gives
\[
    \chi(\widehat\Sigma_0)
    =
    2+\sum_{q=1}^{2N-1}(-1)^q b_q(\widehat\Sigma_0;\RR).
\]
Thus not all intermediate Betti numbers can vanish. By Poincar\'e duality
again, there exists $p$ with $1\le p\le N$ such that
$b_p(\widehat\Sigma_0;\RR)\neq0$. If \(\widehat\Sigma_0\) is nonorientable, pass to its orientable double cover.
The lifted nonvanishing vector field still gives Euler characteristic zero, and
the pullback immersion has the same focal radius. The orientable argument then
applies to the cover.

Applying Theorem \ref{thm:gp-focal-estimate} componentwise, and after passing
to the orientable double cover if necessary, gives
\[
    r_f(F,M)=r_f(\widehat F,E)\le\frac{\pi}{4}.
\]
If equality holds and $E$ is complete, the local rigidity conclusion follows
from the equality case of the hypersurface theorem applied to the lifted
component. Finally, $\mathcal R_E\ge\Id_{\Lambda^2TE}$ implies the two required
curvature assumptions for every $p$ by Example
\ref{ex:curvature-operator-one}.
\end{proof}

We now apply the lifting principle to the Hopf fibrations
\[
    S^1\longrightarrow S^{2n+1}(1)\longrightarrow \CP^n,
    \qquad
    S^3\longrightarrow S^{4n+3}(1)\longrightarrow \HP^n.
\]
The projective metrics are normalized as the quotient metrics induced by the
unit round spheres, so that these maps are Riemannian
submersions.

\begin{theorem}
\label{thm:projective-spaces}
Let $P=\CP^n$ or $\HP^n$ with the Hopf normalization. If
$F:\Sigma\to P$ is a closed connected immersed hypersurface, then
\[
    r_f(F,P)\le\frac{\pi}{4}.
\]
For a one-sided immersion, $r_f$ is understood on the two-sided normal cover.
\end{theorem}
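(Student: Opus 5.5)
This is a direct application of Theorem \ref{thm:quotient-lifting} to the Hopf fibrations. We take $E=S^{2n+1}(1)$ with $G=S^1$ acting by scalar multiplication by unit complex numbers in the complex case, and $E=S^{4n+3}(1)$ with $G=Sp(1)\cong S^3$ acting by right multiplication by unit quaternions in the quaternionic case. In both cases $G$ is a compact Lie group of positive dimension ($d=1$ and $d=3$ respectively). The action is free and isometric, since it is the restriction of a linear unitary, respectively symplectic, action to the unit sphere. By the Hopf normalization, the quotient metric is exactly the chosen metric on $P$, so $\pi:E\to P$ is a Riemannian submersion.

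Next we check the dimensional bookkeeping. Write $\dim E=2N+1$, so $N=n$ in the complex case and $N=2n+1$ in the quaternionic case. A hypersurface $\Sigma\subset P$ has dimension $\dim P-1$, which equals $2n-1=2N-d$ and $4n-1=2N-3=2N-d$ respectively. This matches the dimension required in Theorem \ref{thm:quotient-lifting}. We also need $m=2N\ge2$, which holds for $n\ge1$.

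For the curvature hypothesis, the round unit sphere has curvature operator exactly $\Id_{\Lambda^2TE}$. So the sufficient condition $\mathcal R_E\ge\Id$ holds, and Example \ref{ex:curvature-operator-one} (equivalently Example \ref{ex:round-space-forms}) gives Assumptions \ref{ass:gp-ambient-p-Bochner} and \ref{ass:gp-Ric-p} for every $1\le p\le N$. Theorem \ref{thm:quotient-lifting} then yields $r_f(F,P)\le\pi/4$ for every closed connected immersed $F$, with the one-sided case handled on the two-sided normal cover as stated there.

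There is no real obstacle; the only point needing care is matching conventions. The Hopf normalization must make $\pi$ a Riemannian submersion; this is the Fubini--Study metric with holomorphic sectional curvature $4$, and the quaternionic analogue. We must also confirm that the $Sp(1)$ action is free on the unit sphere, which holds because $vq=v$ with $v\neq0$ forces $q=1$.
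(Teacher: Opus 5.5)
Your proposal is correct and follows the paper's own proof: apply Theorem \ref{thm:quotient-lifting} to the Hopf fibrations with $G=S^1$ or $G=S^3$, with the curvature hypotheses upstairs supplied by Example \ref{ex:round-space-forms}. Your dimension count ($2N-d=\dim P-1$) and your freeness and Riemannian-submersion checks are details the paper leaves implicit.
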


\begin{proof}
For $P=\CP^n$ or $\HP^n$, apply Theorem
\ref{thm:quotient-lifting} to the round sphere with $G=S^1$ or $G=S^3$. The
curvature assumptions hold upstairs by Example \ref{ex:round-space-forms}.
\end{proof}

The equality case in complex and quaternionic projective space follows by
combining the rigidity theorem upstairs with Hopf invariance of the lifted
Clifford model.

\begin{theorem}
\label{thm:projective-rigidity}
Let $P=\CP^n$ or $\HP^n$, with the Hopf normalization, and let
$F:\Sigma\to P$ be a closed connected immersed hypersurface. If
\[
    r_f(F,P)=\frac{\pi}{4},
\]
then $F$ is locally congruent to a projective Clifford hypersurface. More
precisely, after an isometry of $P$, the local model is
\[
    \left\{
    [u:v]\in
    \mathbb P_{\K}(\K^{k+1}\oplus \K^{n-k}):
    \|u\|=\|v\|
    \right\},
    \qquad
    \K=\CC\text{ or }\HH,
\]
for some $0\le k\le n-1$. Equivalently, it is the tube of radius $\pi/4$
around a totally geodesic $\KP^k\subset\KP^n$. For a one-sided immersion, \(r_f\) is understood on the two-sided normal cover.
\end{theorem}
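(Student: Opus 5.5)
The plan is to lift to the Hopf total space, apply the sphere rigidity there, and then show that the resulting Clifford model is a union of Hopf fibres of the right type.

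\textbf{Step 1: lift to the sphere.} Pass to the two-sided normal cover and form the fibre product $\widehat F:\widehat\Sigma\to S^{N'}(1)$, where $N'=2n+1$ or $4n+3$ and $G=S^1$ or $S^3$. As in the proof of Theorem~\ref{thm:quotient-lifting}:
\begin{itemize}
\item $\widehat F$ is a closed immersed hypersurface;
\item its unit normal is the horizontal lift $\widehat\nu$;
\item $r_f(\widehat F,S^{N'})=r_f(F,P)=\pi/4$;
\item some component $\widehat\Sigma_0$, possibly after passing to an orientable double cover, has $b_p\neq0$ for some $1\le p\le N'/2$.
\end{itemize}
The round sphere satisfies the assumptions with equality (Example~\ref{ex:round-space-forms}). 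Hence Proposition~\ref{prop:gp-focalpoint-roundness} and Lemma~\ref{lem:gp-normal-persistence-warped} apply to $\widehat\Sigma_0$. Its principal curvatures are $\pm1$, with parallel eigendistributions $E_-$ (of dimension $p$) and $E_+$ (of dimension $m-p$). Locally, $\widehat F$ is congruent to $S^p(1/\sqrt2)\times S^{m-p}(1/\sqrt2)$. This means there is an orthogonal splitting $\RR^{N'+1}=V_1\oplus V_2$ with $\dim V_1=p+1$, $\dim V_2=m-p+1$, such that $\widehat F$ locally lies in $\{\|u\|=\|v\|\}$.

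\textbf{Step 2: the vertical directions respect the splitting.} The lifted hypersurface is $G$-invariant, so the fibre directions $\mathcal V$ are tangent to $\widehat\Sigma$. Since $\widehat\nu$ is horizontal, the shape operator satisfies
\[
\widehat A(X^\#)=\nabla_{X^\#}\widehat\nu=A_{X^\#}\widehat\nu
\]
(the O'Neill tensor). I expect to show that each vertical vector lies in a single eigenspace. The cleanest route uses the focal maps: $f_+$ collapses exactly the $E_-$-leaves, which are great $p$-spheres of radius $1/\sqrt2$, onto the focal submanifold $S^{m-p}(1)\subset V_2$. Because $f_+$ is $G$-equivariant, the focal set $S(V_2)$ is $G$-invariant, and likewise $S(V_1)$. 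Locally, an immersed piece of a great sphere that is invariant under the Hopf action in an open set is, by analyticity, a full great sphere invariant under $G$. Unit-quaternion (or unit-complex) invariance of $S(V_i)$ then forces $V_1,V_2$ to be $\K$-subspaces. Here $G$ acts by right multiplication $v\mapsto v\lambda$, and a real subspace invariant under all such scalars is a $\K$-subspace. This gives $V_1=\K^{k+1}$ and $V_2=\K^{n-k}$ after a unitary (symplectic) change of coordinates, with $0\le k\le n-1$ since both are nonzero.

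\textbf{Step 3: descend.} Locally, $\widehat F(\widehat\Sigma)$ is an open subset of $\{\|u\|=\|v\|\}\subset S(\K^{k+1}\oplus\K^{n-k})$, which is $G$-invariant. Its image under $\pi$ is therefore locally the projective Clifford hypersurface. This set is the set of points at distance $\pi/4$ from $\KP^k=\pi(S(V_1))$, since horizontal geodesics project to geodesics of the same length. The isometry of the sphere that normalizes the splitting commutes with the $G$-action and so descends to an isometry of $P$.

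\textbf{Main obstacle.} The delicate point is the $G$-invariance of the focal spheres in Step 2. The focal maps are only defined locally on immersed sheets, and the Clifford congruence in Proposition~\ref{prop:gp-focalpoint-roundness} is only local. I would argue as follows. First, $\widehat\Phi_t$ commutes with the $G$-action, so the focal image $f_\pm(\widehat\Sigma)$ is $G$-invariant as a set. Second, a local piece of a great sphere is contained in a unique full great sphere. Given $g\in G$ near the identity, $g$ maps the local focal sheet into a great-sphere piece that overlaps the original, so the two great spheres coincide. Connectedness of $G$ then yields invariance under all of $G$.
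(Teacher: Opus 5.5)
\textbf{Verdict: correct, but your key step differs from the paper's.} Your proof is correct. Its overall structure matches the paper: lift through the fibre product, apply the sphere rigidity upstairs, show the real Clifford splitting $V_1\oplus V_2$ is $\K$-linear, then descend. The genuine difference is how you prove $\K$-linearity.

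\textbf{The paper's route.} The paper argues infinitesimally. Each Hopf generator $B$ ($J$, or $I_\alpha$) satisfies $By\in T_y\widehat F(\widehat\Sigma)$. For the conjugated skew map $A=QBQ^{-1}$, tangency $Au\in T_uC$ to $C=S(V_-,1/\sqrt2)\times S(V_+,1/\sqrt2)$ reduces, by skew-symmetry, to $\langle Au_+,u_-\rangle=0$ on an open set. By bilinearity this holds everywhere, so $AV_\pm\subset V_\pm$.

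\textbf{Your route.} You use the group action instead, and the argument holds:
\begin{enumerate}
\item $\widehat\Phi_t$ is $G$-equivariant.
\item The local focal sheet $f_+(U)$ is an open piece of the great sphere $S(V_2)$. This is because $f_+$ has rank $m-p=\dim S(V_2)$ and in the model is the submersion $(u,v)\mapsto\sqrt2\,v$.
\item For $g$ near the identity, $g\hat x\in U\cap gU$, and $f_+(U\cap gU)=g\,f_+(g^{-1}U\cap U)$ is open both in $S(V_2)$ and in $S(gV_2)$. This forces $gV_2=V_2$.
\item Connectedness of $G$ finishes the argument.
\end{enumerate}

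\textbf{What each buys.} The paper's route is shorter: it needs only tangency of the vertical fields and one line of linear algebra. Yours needs the rank/openness check and connectedness of $G$. In exchange, it directly exhibits the upstairs focal submanifolds as $G$-invariant great spheres. The description as a tube of radius $\pi/4$ about the totally geodesic $\KP^k=\pi(S(V_1))$ then falls out at once.

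\textbf{Corrections that do not affect the proof.}
\begin{itemize}
\item Delete your stated expectation that each vertical vector lies in a single eigenspace of $\widehat A$; it is false. In the complex model with normal $(-u,v)$, the Hopf field at $(u,v)$ is $(iu,iv)$. Then $\widehat A(iu,iv)=(-iu,iv)$, so it has nonzero components in both $E_-$ and $E_+$. Your focal-map argument never uses this claim, so nothing breaks.
\item The relevant O'Neill term is $A_{\widehat\nu}X^\#$, not $A_{X^\#}\widehat\nu$. O'Neill's $A_E$ vanishes for vertical $E$.
\item The $E_-$-leaves are small spheres of radius $1/\sqrt2$, not great spheres.
\end{itemize}
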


\begin{proof}
Let $\pi:S^{\ell(n+1)-1}(1)\to\KP^n$, where $\ell=\dim_{\RR}\K$, and let
$\widehat\Sigma=F^*S^{\ell(n+1)-1}$. By the proof of Theorem
\ref{thm:projective-spaces},
\[
    r_f(\widehat F,S^{\ell(n+1)-1})
    =
    r_f(F,P)
    =
    \frac{\pi}{4}.
\]
The equality case upstairs gives a local congruence with the standard Clifford
hypersurface in the unit sphere. Equivalently, after an ambient orthogonal
isometry of the real Euclidean space underlying \(\K^{n+1}\), the lifted local
model is
\[
    \left\{
    x=x_-+x_+:
    x_\pm\in V_\pm,\ 
    \|x_-\|=\|x_+\|=\frac1{\sqrt2}
    \right\}
    =
    S(V_-,1/\sqrt2)\times S(V_+,1/\sqrt2)
    \subset S(\K^{n+1},1),
\]
for a real orthogonal splitting
\(\K^{n+1}=V_-\oplus V_+\). Here \(S(V,r)=\{v\in V:\|v\|=r\}\).

Let \(B\) denote one of the standard Hopf generators: \(B=J\) in the complex
case, and \(B=I_\alpha\), \(\alpha=1,2,3\), in the quaternionic case. Since
\(\widehat F(\widehat\Sigma)\) is locally the inverse image of \(F(\Sigma)\)
under the Hopf map, it contains the Hopf-fibre directions. Hence \( By\in T_y\widehat F(\widehat\Sigma)\) 
for every point \(y\) of the lifted local image.

Now let \(Q\) be the ambient sphere isometry sending this local lifted image to
the Clifford product
\[
    C=S(V_-,1/\sqrt2)\times S(V_+,1/\sqrt2).
\]
For \(u=Qy\in C\), set \(A=QBQ^{-1}\). Then
\[
    Au=QBQ^{-1}u=Q(By)\in T_uC.
\]
Thus the conjugated Hopf generator \(A\) is tangent to the local Clifford
product. In the quaternionic case, the same argument applies to each
\(A_\alpha=QI_\alpha Q^{-1}\).

Writing \(u=u_-+u_+\) with \(u_\pm\in V_\pm\) and
\(\|u_-\|=\|u_+\|=1/\sqrt2\), the condition \(Au\in T_uC\) gives \(\langle Au,u_-\rangle=0\) and \(\langle Au,u_+\rangle=0\). Since \(A\) is skew-symmetric, this gives \(\langle Au_+,u_-\rangle=0\) on a nonempty open subset of
\(S(V_-,1/\sqrt2)\times S(V_+,1/\sqrt2)\). By bilinearity, the same identity
holds for all \(u_-\in V_-\) and \(u_+\in V_+\). Thus
\(AV_+\subset V_+\), and by skew-symmetry also \(AV_-\subset V_-\).

For \(\K=\CC\), this says that the splitting is complex linear. For
\(\K=\HH\), applying the same argument to the three conjugated quaternionic Hopf
generators shows that the splitting is quaternionic linear. Pulling back by
\(Q^{-1}\), the original lifted Clifford splitting is invariant under the
original Hopf structure.

After an isometry, $V_-=\K^{k+1}$ and $V_+=\K^{n-k}$ for some
$0\le k\le n-1$. Taking the Hopf quotient gives the displayed projective
Clifford model. Finally, for
$L=\KP^k=\{[u:0]\}\subset\KP^n$, the Hopf normalization gives
\[
    d([u:v],L)=\arctan\frac{\|v\|}{\|u\|}.
\]
Thus $\|u\|=\|v\|$ is equivalent to $d([u:v],L)=\pi/4$.
\end{proof}

\bigskip


\end{document}